\sodef\so{}{.14em}{.4em plus.1em minus .1em}{.4em plus.1em minus .1em} 
\newcommand{\nref}[1]{\hyperref[#1]{\ref*{#1}}}
\newcommand{\subtile}[1] 
{
	\vspace{-0.3cm}
	\begin{center}
 		{{\textsc{#1}}}\\
	\end{center}
	\vspace{0.1cm}
}
\newcommand{\imageplain}[2] 
{
      {\includegraphics[scale=#1]{#2.png}}
}
\newcommand{\image}[2] 
{
      \begin{center} 
	\includegraphics[scale=#1]{#2.png}
      \end{center}
}
\newcommand{\hamburger}[4] 
{
  \thispagestyle{empty}
  \vspace*{-2cm}
  \begin{flushright}
    ZMP-HH / #2 \\
    Hamburger Beitr{\"a}ge zur Mathematik Nr. #3 \\
    #4 \\
  \end{flushright}
  \vspace{0.5cm}
  \begin{center}
    \Large \bf
    #1
  \end{center}
  \vspace{0.5cm}
  \begin{center}	
    Simon Lentner \\
    Algebra and Number Theory, 
    University Hamburg,\\
    Bundesstra{\ss}e 55, D-20146 Hamburg \\
    \texttt{simon.lentner@uni-hamburg.de}
  \end{center}
  \vspace{0.5cm}

}
\newcommand{\xleftrightarrow}[2][]{\ext@arrow 3359\leftrightarrowfill@{#1}{#2}}
\newcommand{\xdashrightarrow}[2][]{\ext@arrow 0359\rightarrowfill@@{#1}{#2}}
\newcommand{\xdashleftarrow}[2][]{\ext@arrow 3095\leftarrowfill@@{#1}{#2}}
\newcommand{\xdashleftrightarrow}[2][]{\ext@arrow
3359\leftrightarrowfill@@{#1}{#2}}
\def\rightarrowfill@@{\arrowfill@@\relax\relbar\rightarrow}
\def\leftarrowfill@@{\arrowfill@@\leftarrow\relbar\relax}
\def\leftrightarrowfill@@{\arrowfill@@\leftarrow\relbar\rightarrow}
\def\arrowfill@@#1#2#3#4{%
  $\m@th\thickmuskip0mu\medmuskip\thickmuskip\thinmuskip\thickmuskip
   \relax#4#1
   \xleaders\hbox{$#4#2$}\hfill
   #3$%
}
\newcommand{\Ksymb}[3]{      
      \begin{bmatrix}
      #1; #2 \\
      #3   
      \end{bmatrix}
}
\newcommand{\ad}{\mbox{ad}}
\newcommand{\rank}{\mbox{rank}}
\renewcommand{\mod}{\;\mathrm{mod}\;}
\newcommand{\ord}{\mbox{ord}}
\newcommand{\U}{\mathcal{U}} 	
\newcommand{\B}{\mathcal{B}} 	
\newcommand{\g}{\mathfrak{g}} 	
\renewcommand{\sl}{\mathfrak{sl}}	
\newcommand{\Z}{\mathbb{Z}}  	
\newcommand{\N}{\mathbb{N}}  	
\newcommand{\C}{\mathbb{C}}  	
\renewcommand{\k}{\Bbbk}  	
\newcommand{\F}{\mathbb{F}}  	
\newcommand{\Q}{\mathbb{Q}}  	
\newcommand{\W}{\mathcal{W}} 
\renewcommand{\L}{\mathcal{L}} 
\newcommand{\K}{\mathcal{K}} 
\renewcommand{\l}{\ell} 
\newcommand{\ydm}{Yetter-Drinfel'd module }
\newcommand{\ydms}{Yetter-Drinfel'd modules }
\newcommand{\ydmP}{Yetter-Drinfel'd module}
\theoremstyle{plain}
\newtheorem{theorem}{Theorem}[section]
\newtheorem*{theoremX}{Theorem}
\newtheorem{corollary}[theorem]{Corollary}
\newtheorem{definition}[theorem]{Definition}
\newtheorem{example}[theorem]{Example}
\newtheorem{lemma}[theorem]{Lemma}
\newtheorem{problem}[theorem]{Problem}
\newtheorem{remark}[theorem]{Remark}
\begin{document}

\enlargethispage{10\baselineskip}
\hamburger{A Frobenius homomorphism for Lusztig's quantum groups for
arbitrary roots of unity}
{14-14}{516}{Oct 2014}
\thispagestyle{empty}

\begin{abstract}
For a finite dimensional semisimple Lie algebra and a root of unity, Lusztig
defined an infinite dimensional quantum group of divided powers. Under certain 
restrictions on the order of the root of unity, he constructed
a Frobenius homomorphism with finite dimensional Hopf kernel and
with image the universal enveloping algebra.

In this article we define and describe the Frobenius homomorphism for
arbitrary roots of unity by systematically using the theory of Nichols
algebras. In several new exceptional cases the Frobenius-Lusztig kernel is
associated to a different Lie algebra than the initial Lie algebra. Moreover,
the Frobenius homomorphism often switches short and long roots, and may produce  Lie algebras in a symmetrically braided category. 
\end{abstract}
  \makeatletter
  \@setabstract
  \makeatother

\tableofcontents
\vspace{-1cm}
{Partly supported by the DFG Priority Program 1388 ``Representation
theory''}

\newpage

\section{Introduction}

Fix a finite-dimensional semisimple Lie algebra $\g$ and a primitive $\ell$-th
root of unity $q$. For this data, Lusztig defined in 1989 an
infinite-dimensional complex Hopf algebra $U^\L_q(\g)$ called \emph{restricted
specialization} \cite{Lusz90a}\cite{Lusz90b}. He conjectured that for $\ell$
prime the representation theory of $U^\L_q(\g)$ is deeply connected to the one
of the respective affine Lie algebra as well as to the respective adjoint
Lie group over $\bar{\F}_\ell$. The former statement has been proven in a
certain form by Kazhdan and Lusztig in a series of papers, the latter statement
has been proven in 1994 by Andersen, Jantzen \& Soergel \cite{AJS94}.\\

For $\ell$ odd (and in case $\g=G_2$ not divisible by $3$) Lusztig had
in the cited papers obtained a remarkable Hopf algebra homomorphism to the
classical universal enveloping algebra $U(\g)$, which was for $\ell$ prime
related to the Frobenius homomorphism over the finite field $\F_\ell$. The Hopf
algebra kernel (more precisely the coinvariants) of this map turned out to be a
finite-dimensional Hopf algebra, called the \emph{small quantum group} or
\emph{Frobenius-Lusztig-kernel}, yielding an exact sequence of Hopf algebras: 
$$u_q^{\L}(\g)\xrightarrow{\;\subset\;}
U_q^\L(\g)\xrightarrow{\;Frob\;}U(\g)$$

The discovery of this finite-dimensional Hopf algebra $u_q^{\L}(\g)$ triggered
among others the development of the theory of finite-dimensional \emph{pointed
Hopf algebras} that culminated in the classification results by
Andruskiewitsch \& Schneider \cite{AS10,AnI11}, and a more general classification of possible quantum Borel
parts, the so-called \emph{Nichols algebras}, by Heckenberger \cite{Heck09} using
generalized root systems and Weyl groupoids. \\

The aim of this article is to consider a more general short exact sequence of
Hopf algebras without restrictions on the root of unity. Our approach
somewhat 
differs from Lusztig's explicit approach, as discussed below, and uses
crucially the theory of Nichols algebras. On the other hand we will restrict
ourselves in this article to the positive Borel part.\\

\enlargethispage{.5cm}
Our results are as follows: The cases
with $2,3\;|\;\ell$ exhibit in some cases a Frobenius-homomorphism to the Lie
algebra with the dual root system ($B_n\leftrightarrow C_n$) as has already been
observed in \cite{Lusz94}. As we shall see, moreover for small roots of unity
Lusztig's implicit definition of $u_q^\L(\g)$ inside $U_q^\L(\g)$ does {\bf not} coincide with the
common definition of $u_q(\g)$ by generators and relations. Altogether we shall
treat in this article arbitrary $q$ and find in all cases a Frobenius
homomorphism with finite-dimensional kernel
$$u_q(\g^{(0)})^+\cong u_q^\L(\g)^+\xrightarrow{\;\subset\;}
U_q^\L(\g)^+\xrightarrow{\;Frob\;}U(\g^{(\ell)})^+$$
with $\g^{(0)},\g,\g^{(\ell)}$ quite different Lie algebras, some in
braided symmetric tensor categories.\\

 An exotic case in our work is $\g=G_2,q=\pm i$. Here, both simple roots $\alpha_1,\alpha_2$ are present in  $\g^{(0)}$ and form a root system $A_2$ because of a premature Serre relation. However in total $\g^{(0)}=A_3$, generated by root vectors for $\alpha_1,\alpha_2,\alpha_{112}$ of $G_2$. This  seems to be rather exceptional and gives an interesting counterexample, for example to a conjecture\footnote{Thanks to I. Heckenberger for pointing this out} in \cite{An14}.   \\

The author would be very interested to understand the similar list for  \cite{Len14b} affine Lie algebras, where some situations seems still confusing, and to other Nichols algebras, where due to a lack of a deformation parameter it seems to be hard to write down a full quantum group of divided powers resp. a full quantum group with a large center.\\

As one application of our results, let us mention that \cite{FGST05}\cite{FT10}
have conjectured remarkable connections of $u_q(\g),U_q^\L(\g)$ to certain
vertex algebras. The degenerate case $\g=B_n,q=\pm i$ in the present article gives a Lusztig divided power algebra, which is an extension of a small quantum group of type $\g^{(0)}=A_1^{\times n}$ (the short roots) with a Lie algebra of type $\g^{(\ell)}=C_n$ (dual rootsystem). In the application, we have shown in \cite{FL17} that this case corresponds to the vertex algebra of $n$ pairs of symplectic fermions, with a global symmetry group $\mathrm{Sp}_{2n}$ of type $C_n$.\\

We now review the results of this article in more detail:\\

In Section \nref{sec_preliminaries} we fix
the Lie-theoretic notation and prove some technical preliminaries.
We also introduce Nichols algebras in the special cases relevant to this
article.\\
In Section \nref{sec_forms} we review the construction of the
Lusztig quantum group $U_q^\L(\g)$ via rational and integral forms and some
basic properties.\\
In Section \nref{sec_quantumgroup} we slightly improve some results in 
\cite{Lusz90a}\cite{Lusz90b} to account for arbitrary rational forms and
arbitrary roots of unity and start to target the algebra structure.\\
In Section \nref{sec_smalluq} we obtain the first main result: We use 
Nichols algebras to explicitly describe the assumed kernel $u_q^\L(\g)$
and its root system without restrictions on $q$:

\enlargethispage{1.5cm}
\begin{theoremX}[\nref{thm_smalluq}]
  For $\ord(q^2)>d_\alpha$ for all $\alpha\in \Phi^+$ we have
  $u_q^\L(\g,\Lambda)\cong  u_q(\g,\Lambda)$. If some 
  $\ord(q^2)\leq d_\alpha$  we can
  express $u_q^\L(\g,\Lambda)^+$ in terms of some ordinary
  $u_q(\g^{(0)},\Lambda)^+$ as
  follows:
  \begin{center}
  \begin{tabular}{ll|llll}
    $q\qquad$ & $\g\qquad$ & $u_q^\L(\g,\Lambda)^+\qquad$ & $\dim$ & primitive
    generators & Comment\\
    \hline
    $\pm 1$ & all & $\C$ & $1$ & none & trivial \\
    $\pm i$ & $B_n$ & $u_q(A_1^{\times n})^+$ & $2^n$ & $E_{\alpha_n},
      E_{\alpha_n+\alpha_{n-1}}, E_{\alpha_n+\alpha_{n-1}+\alpha_{n-2}}, \ldots$
      & short roots \\
    $\pm i$ & $C_n$ & $u_q(D_n)^+$ & $2^{n(n-1)}$ & $E_{\alpha_1},\ldots
      E_{\alpha_{n-1}},E_{\alpha_{n}+\alpha_{n-1}}$ & short roots \\
    $\pm i$ & $F_4$ & $u_q(D_4)^+$ & $2^{12}$ & $E_{\alpha_4},E_{\alpha_3},
    E_{\alpha_3+\alpha_2},E_{\alpha_3+\alpha_2+\alpha_1}$
      & short roots \\
    $\sqrt[3]{1},\sqrt[6]{1}$ & $G_2$ & $u_q(A_2)^+$ & $3^3$ & 
      $E_{\alpha_1},E_{\alpha_1+\alpha_2}$
      & short roots \\
    $\pm i$ & $G_2$ & $u_{\bar{q}}(A_3)^+$ & $2^6$ &
      $E_{\alpha_2},E_{\alpha_1},E_{2\alpha_1+\alpha_2}$ & exotic \\
  \end{tabular}
  \end{center}
\end{theoremX}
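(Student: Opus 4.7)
The plan is to identify $u_q^\L(\g,\Lambda)^+$ as a Nichols algebra of a suitable Yetter-Drinfeld module and then invoke Heckenberger's classification to recognize it. Since $u_q^\L(\g,\Lambda)$ is the Hopf kernel of the (assumed) Frobenius projection from $U_q^\L(\g,\Lambda)$, and since its positive part is a pointed, coradically graded, connected Hopf algebra in the Yetter-Drinfeld category over its grouplike component, it is forced to coincide with the Nichols algebra $\mathfrak{B}(V)$ of its space $V$ of primitive elements. The whole question therefore reduces to determining $V$ and its braiding.

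First I would enumerate $V$. The simple-root generators $E_i$ are always primitive and always belong to the kernel. In addition, for each root $\beta$ whose $q$-parameter $q^{(\beta,\beta)}=q^{2d_\beta}$ equals $1$, the iterated $q$-commutator defining the root vector $E_\beta$ degenerates: the would-be Serre-type relation becomes a primitivity condition, so $E_\beta$ (or a suitable quantum divided power of it) descends to a primitive element in $u_q^\L(\g,\Lambda)^+$. When $\ord(q^2)>d_\alpha$ for all $\alpha$, no such degeneration happens and $V$ is spanned by the $E_i$; this gives the claimed $u_q^\L(\g,\Lambda)\cong u_q(\g,\Lambda)$. In each row of the table, the listed primitive generators are read off systematically by tracking exactly at which PBW step this degeneration occurs.

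Second, for each case I would compute the diagonal braiding matrix on the proposed generator set via $q_{\beta\gamma}=q^{(\beta,\gamma)}$ and match the resulting generalized Dynkin diagram against Heckenberger's list. For $\g=B_n$, $q=\pm i$, the short roots $\alpha_n,\alpha_{n-1}+\alpha_n,\ldots$ turn out to be pairwise $q$-orthogonal because their inner products are even integers, giving $A_1^{\times n}$; analogous inner-product computations yield $D_n$, $D_4$, $A_2$ in the next three rows. The Nichols algebra of the resulting diagonal braiding is then identified with $u_q(\g^{(0)},\Lambda)^+$ by the standard correspondence between Nichols algebras of Cartan type and small quantum groups, and a PBW count confirms the listed dimensions.

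The main obstacle is the exotic last row, $\g=G_2$, $q=\pm i$. Here one must identify a primitive element that is not a simple root vector, namely $E_{2\alpha_1+\alpha_2}$, and show that the resulting rank-three braided vector space with generators $E_{\alpha_2},E_{\alpha_1},E_{2\alpha_1+\alpha_2}$ carries exactly the $A_3$ braiding (with the twist $\bar q$). Crucially, one must verify that no further primitives appear, so that the Nichols algebra closes at rank three with total dimension $2^6$; this requires carefully analysing the higher $q$-commutators and PBW structure of $U_q^\L(G_2)^+$ at $q=\pm i$, and is the technical heart of the theorem.
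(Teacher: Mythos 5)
Your blueprint (describe the primitives, compute the braiding, match against Heckenberger's list, confirm by a PBW dimension count) is essentially the paper's strategy, but several of your intermediate claims are wrong in ways that would derail the argument.

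\emph{The "forced to be a Nichols algebra" step is a gap.} You assert that because $u_q^\L(\g,\Lambda)^+$ is a connected, coradically graded Hopf algebra in the Yetter--Drinfeld category it must equal $\B(V)$ for $V$ its primitives. Even granting the coradical grading, this would also require generation in degree one, which is exactly the (nontrivial) Andruskiewitsch--Schneider generation problem; it is not automatic. The paper instead takes the Hopf subalgebra $H\subseteq u_q^{\L,+}$ generated by the identified primitives, uses the universal property to get a surjection $H\twoheadrightarrow\B(V)$, and then closes the argument by comparing $\dim\B(V)$ (from Heckenberger) with $\dim u_q^{\L,+}$ (from Lusztig's PBW basis, Theorem \ref{thm_Lusztiguq}). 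Your parenthetical "PBW count confirms the dimensions" is doing all the work, and should be the backbone of the proof, not an afterthought. It also cannot rely on "the Hopf kernel of the (assumed) Frobenius projection": the Frobenius for degenerate $\ell$ is constructed in Section \ref{sec_shortexactsequence} \emph{using} this theorem, so that reference is circular; $u_q^\L$ must be taken from Definition \ref{def_uqL}.

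\emph{Your criterion for the primitives is backwards.} You write that the simple root vectors always belong to $u_q^\L$, and that roots $\beta$ with $q^{(\beta,\beta)}=q^{2d_\beta}=1$ contribute extra primitives. Both statements fail: by Definition \ref{def_uqL}, $E_\alpha\in u_q^\L$ precisely when $\ell_\alpha=\ord(q^{2d_\alpha})>1$, so $q^{2d_\beta}=1$ means $\beta$ is \emph{excluded}. For $\g=B_n$, $q=\pm i$, the long simple roots $\alpha_1,\ldots,\alpha_{n-1}$ have $\ell_{\alpha_i}=1$ and do not lie in $u_q^\L$; the set of generators is the set of short roots, only one of which ($\alpha_n$) is simple. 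The correct criterion, used in the paper, is that the $E_\alpha$, $\alpha\in X$, satisfy $\ell_\alpha>1$ and are built inductively as braided commutators $[x,y]=xy-q_{12}yx$ of primitives with $q_{12}q_{21}=1$ (and the commutator must be checked nonzero in the specialization). This is what produces the short-root pattern in the table and, in the exotic $G_2$, $q=\pm i$ case, requires a direct coproduct calculation for $E_{2\alpha_1+\alpha_2}$ rather than an inner-product heuristic.
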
\enlargethispage{1cm}

In Section \nref{sec_shortexactsequence} most of the work is done. The
strategy to obtain a Frobenius homomorphism is quite
conceptual, uses the previously obtained kernels and works for arbitrary
$q$:
\begin{enumerate}[a)]
  \item In Theorem \nref{thm_pairs} we extend a trick used by Lusztig
  in the simply-laced case: We prove that all pairs
  of roots can be simultaneously reflected into rank $2$ parabolic subsystems;
  we also add a complete classification of orbits. Hence it often suffices to
  verify statements only in rank $2$. 
  \item In Lemma \nref{lm_normal} we prove that $u_q^{\L,+}$ is a normal Hopf
  subalgebra of $U_q^{\L,+}$. This is done using the explicit description of
  $u_q^{\L,+}$ in the previous section together with the trick a). Our proof
  actually returns the adjoint action quite explicitly.
  \item We then consider abstractly the
  quotient $H$ of $U_q^{\L,+}$ by the normal Hopf subalgebra $u_q^{\L,+}$ (in
  the  category of $\Lambda$-Yetter-Drinfel'd modules). Using again trick a) we
  prove it is generated by primitive elements and has the expected commutator
  structure; it is hence isomorphic to some explicit $U(\g^{(\ell)})^+$. Note
  that the identification sometimes switches long and short root and
  picks up  additional factors. Except if certain lattices are even we prove $H$
  is an ordinary Hopf algebra, in the other cases $H$ is in a symmetrically
  braided category (so calling $\g^{(\ell)}$ a Lie algebras still makes sense). 
\end{enumerate}
Combining these results we finally achieve our main theorem:
\begin{theoremX}[\nref{thm_main}]
  Depending on $\g$ and $\ell$ we have the following exact sequences of Hopf
  algebras in the category of $\Lambda$-Yetter-Drinfel'd modules:
  $$u_q(\g^{(0)},\Lambda)^+\xrightarrow{\;\subset\;} U_q^\L(\g,
  \Lambda)^+\xrightarrow{\;Frob\;}U(\g^{(\ell)})^+$$
\begin{center}
\begin{tabular}{l|ll|l|ll}
& $\g\qquad$ & $\ell=\ord(q)$ & $\g^{(0)}\quad$ &
$\g^{(\ell)}\quad$ & is braided for\\
\hline\hline
\textnormal{Trivial cases:}
& all & $\ell=1$ & $0$ & $\g$ & no \\
& all & $\ell=2$ & $0$ & $\g$ & $ADE_{n\geq 2},C_{n\geq 3},F_4,G_2$ \\
\cline{2-6}
\multirow{4}{*}{\textnormal{Generic cases:}} 
& $ADE$ & $\ell\neq 1,2$ & $\g$ & $\g$ & $\ell=2\mod 4,n\geq 2$ \\
&$B_n$ & $4\nmid \ell\neq 1,2$ & $B_n$ & $B_n$ & no \\
&$C_n$ & $4\nmid \ell\neq 1,2$ & $C_n$ & $C_n$ & $\ell=2\mod 4,n\geq 3$ \\
&$F_4$ & $4\nmid \ell\neq 1,2$ & $F_4$ & $F_4$ & $\ell=2\mod 4$\\ 
&$G_2$ & $3\nmid\ell\neq 1,2,4$ & $G_2$ & $G_2$ & $\ell=2\mod 4$\\
\cline{2-6}
\multirow{8}{*}{\textnormal{Duality cases:}$\quad$}
& $B_n$ & $4|\ell\neq 4$ & $B_n$ & $C_n$ & $\ell=4\mod 8,n\geq 3$ \\
&& $\ell=4$ & $A_1^{\times n}$ & $C_n$ &  $n\geq 3$\\      
& $C_n$ & $4|\ell\neq 4$ & $C_n$ & $B_n$ &  no \\
&& $\ell=4$ & $D_n$ & $B_n$ & no \\
& $F_4$ & $4|\ell\neq 4$ & $F_4$ & $F_4$ & $\ell=4\mod 8$\\
&& $\ell=4$ & $D_4$ & $F_4$ & yes \\ 
& $G_2$ & $3|\ell\neq 3,6$ & $G_2$ & $G_2$ & $\ell=2\mod 4$\\
&& $\ell=3,6$ & $A_2$ & $G_2$ & $\ell=6$\\
\cline{2-6}
\textnormal{Exotic case:}
& $G_2$ & $\ell=4$ & $A_3$ & $G_2$ & no \\ 
\end{tabular}
\end{center}
\end{theoremX}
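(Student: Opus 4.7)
The plan is to follow the three-step strategy sketched in the introduction, leveraging the explicit kernel from Theorem \nref{thm_smalluq} and the reflection trick of Theorem \nref{thm_pairs}. Throughout, work inside the braided category of $\Lambda$-Yetter-Drinfel'd modules so that the analysis is uniform across the ordinary and braided cases.

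\textbf{Step 1 (Normality).} Invoke Lemma \nref{lm_normal}: $u_q^\L(\g,\Lambda)^+$ is a normal Hopf subalgebra of $U_q^\L(\g,\Lambda)^+$. By Theorem \nref{thm_pairs} any pair of positive roots can be simultaneously conjugated into a rank-$2$ parabolic, so the check that the adjoint action of a Chevalley generator $E_\alpha^{(n)}$, $F_\alpha^{(n)}$, $K_\alpha$ preserves $u_q^{\L,+}$ reduces to the rank-$2$ types $A_1{\times}A_1$, $A_2$, $B_2$, $G_2$. In each case use the explicit generator list from the table of Theorem \nref{thm_smalluq} and a direct commutator computation; this simultaneously yields an explicit formula for the adjoint action that will be re-used in Step 2.

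\textbf{Step 2 (Identifying the quotient).} Form $H := U_q^\L(\g,\Lambda)^+ / u_q^\L(\g,\Lambda)^+$ in $\Lambda$-Yetter-Drinfel'd modules. The candidate primitive generators are the divided powers $E_\alpha^{(\ord(q^{2d_\alpha}))}$ for $\alpha$ simple (with $\ord$ adjusted by a factor $2$ when needed for parity). Using a Lusztig-type PBW basis, verify that these classes are primitive modulo $u_q^{\L,+}$ and that they generate $H$, because everything of smaller filtration degree lies in the kernel. To check that the commutators match the Serre relations of some $U(\g^{(\ell)})^+$, apply trick a) once more to reduce to the rank-$2$ parabolics and compute the quantum Serre relations under the relevant divided powers; the effective Cartan entries are $a_{ij}^{(\ell)} = (\alpha_i|\alpha_j) \ord(q^{2d_{\alpha_i}}) \ord(q^{2d_{\alpha_j}})/\text{(diagonal)}$, which after simplification produce the dual root system (swapping long and short) in the duality rows of the table and produce the original one in the generic rows. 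The exotic row $G_2,\ell=4$ is handled analogously but starting from the $A_3$-type Nichols algebra of Theorem \nref{thm_smalluq}; here one must show that the three $G_2$-type primitive images satisfy the $G_2^+$ Serre relations modulo the $A_3$ kernel, which is the most delicate single computation.

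\textbf{Step 3 (Braided vs.\ ordinary).} The final column of the table is determined by the self-braiding $q^{d_\alpha\,\ord(q^{2d_\alpha})^2}$ and the mutual braidings $q^{(\alpha|\beta)\ord(q^{2d_\alpha})\ord(q^{2d_\beta})}$ of the primitive generators in $H$. Enumerate when these are all $+1$ (then $H \cong U(\g^{(\ell)})^+$ as an ordinary Hopf algebra) versus when some of them are $-1$ (then $H$ is a Lie-algebra-type Hopf algebra in a symmetric category with the appropriate sign braiding). The separation is controlled by the parity of $\ell/\gcd(\ell,2d_\alpha)$, which recovers exactly the conditions $\ell \equiv 2 \pmod 4$, $\ell \equiv 4 \pmod 8$ listed in the table. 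Combining Steps 1--3 gives a short exact sequence of Hopf algebras in the stated category for each row.

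\textbf{Main obstacle.} The hard part is Step 2 in the duality and exotic rows: verifying in rank $2$ that the divided-power classes generate and obey the correct Serre relations when the effective Cartan matrix changes type (e.g.\ $B_n \leftrightarrow C_n$, or $A_3 \rightsquigarrow G_2$). In these cases one cannot invoke the standard quantum Serre identities naively, because the needed divided powers of $E_\alpha$ and the commutator adjustments collide with relations that only hold modulo $u_q^{\L,+}$; each such rank-$2$ collision must be resolved using the explicit adjoint action from Step 1. Once this rank-$2$ casework is done, trick a) from Theorem \nref{thm_pairs} propagates it to arbitrary rank and the theorem follows.
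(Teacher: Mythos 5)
Your overall three-step scaffold (normality via Lemma \nref{lm_normal}, quotient identification, braided vs.\ ordinary via Lemma \nref{lm_ellLattice}) matches the paper's strategy, and Steps 1 and 3 are essentially right. However, Step 2 as you describe it has a genuine gap, and it is one the paper explicitly flags.

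You propose to compute ``effective Cartan entries'' and verify quantum Serre relations among the simple divided powers $E_{\alpha_i}^{(\ell_{\alpha_i})}$, then read off the type of $\g^{(\ell)}$. The paper warns precisely against this: without the negative Borel part one cannot invoke Kostant--Cartier to conclude that $H$ is a universal enveloping algebra, there is no a priori reason the simple root vectors generate $H$, and in the braided cases ``positive part of a semisimple Lie algebra'' is not a notion one can match against a Cartan matrix. Your remark that the simple classes generate ``because everything of smaller filtration degree lies in the kernel'' is not a proof --- $u_q^{\L,+}$ is not the low-degree part of a filtration on $U_q^{\L,+}$, so this does not follow. The paper instead first establishes a PBW basis of $H$ with generators $E_\alpha^{(\ell_\alpha)}$ for \emph{all} positive roots $\alpha$ (a lemma you never use), and then, using Lusztig's commutation formulae in rank $2$, checks directly that the brackets $[E_\alpha^{(\ell_\alpha)},E_\beta^{(\ell_\beta)}]$ reproduce the root-space decomposition of $\g^{(\ell)}$; this simultaneously proves generation by the simple classes and fixes the Lie algebra structure. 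Crucially, the exceptional non-commuting pairs in Lemma \nref{lm_commute} with $\alpha+\beta\notin\Phi^+$ --- e.g.\ $(\alpha_{112},\alpha_2)$ in $B_2$ --- are what produce the dual root system in the duality rows, and your Serre-relation sketch has no mechanism for seeing them. Finally, the exotic case is slightly misstated in your plan: the quotient for $G_2,\ell=4$ is generated by \emph{two} simple divided powers $E_{\alpha_1}^{(2)},E_{\alpha_2}^{(2)}$ (the $A_3$ appears only as the kernel and makes the normality check in Step 1 harder, not Step 2); the difficulty there is verifying the $G_2$ bracket structure via the commutation formulae \cite{Lusz90b} Sec.~5.4, not reconciling an $A_3$ with a $G_2$ Cartan matrix.
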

In the ``duality cases'' the Frobenius
homomorphism interchanges short and long roots. For small values of $\ell$ the
Frobenius-Lusztig kernel $u_q(\g^{(0)})$ usually degenerates, up to the point
where it vanishes in the ``trivial case'' $q=\pm 1$. Several cases are
``braided'', meaning $\g^{(\ell)}$ is a Lie algebras in a braided
symmetric category (precisely the even lattices
$\Lambda_R^{(\ell)}$ in Lemma \nref{lm_ellLattice}). The ``exotic case'' will
exhibit strange phenomena throughout this article.\\

\enlargethispage{1.5cm}
In Section \nref{sec_question} we state some open problems in the context of
this article.\\

\section{Preliminaries}\label{sec_preliminaries}

\subsection{Lie Theory}
Let $\g$ be a finite-dimensional, semisimple complex Lie algebra with simple
roots $\alpha_i$ indexed by $i\in I$ and a set of positive roots $\Phi^+$.
Denote the Killing form by $(,)$, normalized such that $(\alpha,\alpha)=2$ for
the short roots. The Cartan matrix is 
$$a_{ij}=2\frac{(\alpha_i,\alpha_j)}{(\alpha_i,\alpha_i)}$$
Be warned that there are different conventions for the index order of $a$, here
we use the convention usual in the theory of quantum groups.\\

It is custom to call $d_\alpha:=(\alpha,\alpha)/2$ with $d_\alpha\in\{1,2,3\}$,
especially $d_i:=d_{\alpha_i}$, which only depends on the orbit of $\alpha$
under the Weyl group. In this notation $(\alpha_i,\alpha_j)=d_ia_{ij}$. 

\begin{definition}
    The \emph{root lattice} $\Lambda_R=\Lambda_R(\g)$ is the free abelian
    group with $\rank(\Lambda_R)=\rank(\g)=|I|$ and is generated by
    $K_{\alpha_i}$ for each simple root $\alpha_i$. We denote general
    group elements in $\Lambda_R$ by $K_{\alpha}$ for elements $\alpha$ in
    the root lattice of $\g$. 
    The Killing form induces an integral pairing of abelian groups, turning
    $\Lambda_R$ into an \emph{integral lattice}: 
    $$(\_,\_):\;\Lambda_R\times \Lambda_R \to \Z$$
    $$( K_{\alpha},K_{\beta})
      := (\alpha,\beta)$$
\end{definition}
\begin{definition}    
    The \emph{weight lattice} $\Lambda_W=\Lambda_W(\g)$ 
    is the free abelian group with $\rank(\Lambda_W)=\rank(\g)$ generated by
    $K_{\lambda_i}$ for
    each fundamental dominant weight $\lambda_i$. We denote general
    group elements in $\Lambda_W$ by $K_{\lambda}$ with $\lambda$ in the
    weight lattice of $\g$.
    It is a standard fact of Lie theory (cf. \cite{Hum72}, Section 13.1) that
    the root lattice is contained in the weight lattice and we shall in what
    follows tacitly identify $\Lambda_R\subset \Lambda_W$. Moreover it is known
    that the pairing on $\Lambda_R$ can be extended to a \emph{integral
    pairing}:
    $$( \_,\_):\;\Lambda_W\times \Lambda_R \to \Z$$
    $$( K_{\lambda},K_{\beta} ):= (\lambda,\beta)$$
    Note that for multiply-laced $\g$ the group $\Lambda_W$ is \emph{no
    integral lattice}.
\end{definition}

For later use, we also define the following sublattice of the root
lattice $\Lambda_R$:
\begin{definition}
    The $\ell$-lattice $\Lambda_R^{(\ell)}\subset \Lambda_R$ for any
    positive integer $\ell$ is defined as follows
    $$\Lambda_R^{(\ell)}:=\langle K_{\alpha_i}^{\ell_i},\; i\in I\rangle$$
    where $\ell_i=\ell/gcd(\ell,2 d_i)$ is the order of $q^{2d_i}$ for $q$
    a primitive $\ell$-th root of unity. More generally we define for any root
    $\ell_\alpha=\ell/gcd(\ell,2d_\alpha)$, which only depends on the orbit of
    $\alpha$ under the Weyl group.
\end{definition}
\begin{example}
    In the case where $\g$ is simply-laced (hence all $d_\alpha=1$) we have
    $$
    \ell_i=\begin{cases}
      \ell, & \ell\;odd\\ 
      \frac{\ell}{2},&\ell\;even\\
    \end{cases}
    \qquad\Lambda_R^{(\ell)}=\begin{cases}
      \ell\cdot\Lambda_R, & \ell\;odd\\ 
      \frac{\ell}{2}\cdot\Lambda_R,&\ell\;even\\
    \end{cases}$$
\end{example}

Frequently, later statements can be simplified if all $\ell_i=\ell$, which is
equivalent to the ``generic case'' $2\nmid \ell$ (and $3\nmid \ell$ for
$\g=G_2$). Moreover for small $\ell$ the set of roots with
$\ell_\alpha=1$ will be important. For later use we prove

\begin{lemma}\label{lm_ellLattice}
    For all $\alpha,\beta\in \Lambda_R^{(\ell)}$ we have
    $$(\alpha,\alpha)\in \ell\Z 
    \qquad (\alpha,\beta)\in \frac{\ell}{2}\Z$$
    Moreover we have $(\alpha,\beta)\in\ell\Z$ except in the following
    cases: 
    \begin{center}
    \begin{tabular}{ll}
     $\g$ & Exceptions \\
     \hline
     $A_n,D_n,E_6,E_7,E_8,G_2$ & $\ell=2\mod 4$\\
     $B_n,n\geq 3$ & $\ell=4\mod 8$\\
     $C_n,n\geq 3$ & $\ell=2\mod 4$\\
     $F_4$ & $\ell=2,4,6\mod 8$\\
    \end{tabular}
    \end{center}
\end{lemma}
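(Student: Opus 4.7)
The plan is to reduce all three assertions to the generator pairings $\ell_i\ell_j(\alpha_i,\alpha_j)=\ell_i\ell_j d_i a_{ij}$. By the very definition of $\Lambda_R^{(\ell)}$, every element is a $\Z$-linear combination of the $\ell_i\alpha_i$, so bilinearity together with the expansions
$$
(\alpha,\alpha)=\sum_i n_i^2\,\ell_i^2(\alpha_i,\alpha_i)+2\sum_{i<j}n_in_j\,\ell_i\ell_j(\alpha_i,\alpha_j),\qquad
(\alpha,\beta)=\sum_{i,j}n_im_j\,\ell_i\ell_j(\alpha_i,\alpha_j)
$$
mean I only have to inspect these basis values. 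I abbreviate $g_i:=\gcd(\ell,2d_i)$, so $\ell_i=\ell/g_i$.

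For the two universal claims I first settle the diagonal contribution: $\ell_i^2(\alpha_i,\alpha_i)=2d_i\ell_i^2=\ell\cdot\ell_i\cdot(2d_i/g_i)$, which lies in $\ell\Z$ because $g_i\mid 2d_i$. For the off-diagonal part I use the standard fact that for adjacent simple roots $(\alpha_i,\alpha_j)=-\max(d_i,d_j)$ (and $0$ otherwise); assuming WLOG $d_i\geq d_j$ this gives $2\ell_i\ell_j(\alpha_i,\alpha_j)=-\ell\cdot\ell_j\cdot(2d_i/g_i)\in\ell\Z$. Feeding both into the expansions above yields $(\alpha,\alpha)\in\ell\Z$ and $(\alpha,\beta)\in\tfrac{\ell}{2}\Z$.

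For the exception list I decide, bond by bond, whether already $\ell_i\ell_j(\alpha_i,\alpha_j)\in\ell\Z$, which amounts to $g_ig_j\mid\ell\cdot|(\alpha_i,\alpha_j)|$. A short direct check, parameterised by the residue of $\ell$ modulo $8$ (and additionally modulo $3$ for $G_2$), gives: simply-laced bonds (including the short--short $A_{n-1}$ inside $C_n$ and the short--short $A_2$ inside $F_4$) produce an exception exactly for $\ell\equiv 2 \mod 4$; long--long bonds with $d=2$ (the long-root $A_{n-1}$ inside $B_n$ and the long-root $A_2$ inside $F_4$) exactly for $\ell\equiv 4 \mod 8$; the mixed $d_i=2,d_j=1$ short--long bond never produces an exception, since $2\ell/(\gcd(\ell,4)\gcd(\ell,2))$ is always an integer; and the $G_2$ bond ($d_i=3,d_j=1$) produces an exception exactly for $\ell\equiv 2 \mod 4$, regardless of whether $3\mid\ell$.

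Assembling the bonds actually realised in each Dynkin diagram then produces the table. The restriction $n\geq 3$ in $B_n$ and $C_n$ reflects the absence of same-$d$ adjacent simples in $B_2=C_2$ (where only a mixed bond occurs, contributing no exception), while the $F_4$ row merges the two separate exception sets $\ell\equiv 2 \mod 4$ and $\ell\equiv 4 \mod 8$ into $\ell\equiv 2,4,6 \mod 8$. The only obstacle is bookkeeping: the arithmetic per bond is trivial, but one has to match the bonds to each Cartan matrix and handle the $G_2$ case where $\gcd(\ell,6)$ splits into several subcases depending on $3\mid\ell$.
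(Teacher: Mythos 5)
Your proposal is correct and takes essentially the same approach as the paper: reduce to the basis pairings $\ell_i\ell_j(\alpha_i,\alpha_j)$, handle the diagonal terms separately, then go bond-by-bond (equivalently, rank-$2$ parabolic by rank-$2$ parabolic), which is exactly what the paper does. The only cosmetic difference is that you package the off-diagonal computation via the uniform identity $(\alpha_i,\alpha_j)=-\max(d_i,d_j)$ for adjacent simple roots, whereas the paper spells out the $A_1\times A_1$, $A_2$ (with $d\in\{1,2\}$), $B_2$, and $G_2$ subcases individually — same calculation, same conclusions, same assembly into the exception table.
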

The exceptions will correspond to braided cases of the short exact sequence
in the Main Theorem \nref{thm_main}.
\begin{proof}
  It is sufficient to check the condition $\ell|(\alpha,\beta)$ on the lattice
  basis $\ell_i\alpha_i,i\in I$. We check for each $i,j$ whether the
  quotient $X$ is an integer:
  $$X:=\frac{(\ell_i\alpha_i,\ell_j\alpha_j)}{\ell}
  =\frac{\ell_i\ell_j(\alpha_i,\alpha_j)}{\ell}
  =\frac{\ell\cdot (\alpha_i,\alpha_j)}{gcd(\ell,2d_i)\cdot gcd(\ell,2d_j)}$$
  We start by checking the cases $i=j$ where we find indeed
  $X=\frac{\ell}{gcd(\ell,2d_i)}\cdot
  \frac{2d_i}{gcd(\ell,2d_i)}\in \Z$.
  To check the cases $i\neq j$ we can restrict ourselves to Lie algebras of
  rank $2$, where we check the claim case by case:
  \begin{itemize}
   \item For type $A_1\times A_1$ we have $(\alpha_i,\alpha_j)=0$.
   \item For type $A_2$ we have $d_i=d_j=d\in\{1,2\}$ and $(\alpha_i,\alpha_j)=-d$, hence
    $$X=\frac{\ell\cdot (-d)}{gcd(\ell,2d)\cdot gcd(\ell,2d)}$$
    If $2\nmid\ell$ we have $gcd(\ell,2d)=gcd(\ell,d)$ and hence
    $X=\frac{\ell}{gcd(\ell,d)}
    \cdot\frac{-d}{gcd(\ell,d)}\in \Z$.
    If $d|\ell$ but $2d\nmid \ell$ we have $gcd(\ell,2d)=d$ and hence
    $X=\frac{\ell}{d}\cdot\frac{-d}{d}\in \Z$.
    If $4d|\ell$ we have 
    $X=\frac{\ell}{2d}\cdot\frac{-d}{2d}=\frac{\ell}{4d}\cdot\frac{-d}{d}
    \in \Z$. 
    If however $2d|\ell$ but $4d\nmid \ell$ we have
    $X=\frac{\ell}{2d}\cdot\frac{-d}{2d}\in \Z+\frac{1}{2}$.
    \item For type $B_2$ we have $d_i=1,d_j=2$ and $(\alpha_i,\alpha_j)=-2$.
    Hence
    $$X=\frac{\ell\cdot (-2)}{gcd(\ell,2)\cdot gcd(\ell,4)}
		=\frac{-2}{gcd(\ell,2)}\cdot \frac{\ell}{gcd(\ell,4)}\in\Z$$
    \item For type $G_2$ we have $d_i=1,d_j=3$ and $(\alpha_i,\alpha_j)=-3$.
    Hence
    $$X=\frac{\ell\cdot (-3)}{gcd(\ell,2)\cdot gcd(\ell,6)}$$
    If $2\nmid \ell$ or $4|\ell$ we have as for $A_2$ that $X\in\Z$, while for
    $2|\ell,4\nmid \ell$ we have $X\in\Z+\frac{1}{2}$.
  \end{itemize}
  The assertion follows now from considering all pairs of simple roots $(i,j)$: 
  \begin{itemize}
   \item For $\g$ simply-laced, all $(i,j)$ are either $A_1\times A_1$ or $A_2$
    for short roots $d=1$. The exceptional cases are hence $\ell=2\mod 4$ 
    whenever an edge exists i.e. $n\geq 2$.
   \item For $\g=C_n$, all $(i,j)$ are either $A_1\times A_1$ or $A_2$
    for short roots $d=1$ or $B_2=C_2$. The exceptional cases are hence
    $\ell=2\mod 4$ for $n\geq 3$.
  \item For $\g=B_n$, all $(i,j)$ are either $A_1\times A_1$ or $A_2$
    for long roots $d=2$ or $B_2=C_2$. The exceptional cases are hence
    $\ell=4\mod 8$ for $n\geq 3$.
  \item For $\g=F_4$, all $(i,j)$ are either $A_1\times A_1$ or $A_2$
    for short roots $d=1$ or long roots $d=2$ or $B_2=C_2$. The exceptional
    cases are hence $\ell=2\mod 4$ as well as $\ell=4\mod 8$.
  \item For $G_2$ we already calculated the exceptional cases to be $\ell=2\mod
    4$.
  \end{itemize}
\end{proof}

\subsection{Nichols algebras}\label{ssec_NicholsAlgebras}

Nichols algebras generalize the Borel parts of quantum groups in the
classification of pointed Hopf algebras, see \cite{AS10} Sec. 5.1. In this
article we only use the Nichols algebras appearing in ordinary quantum groups,
as briefly introduced in the following, but their use makes the
later constructions more transparent. For a detailed account on Nichols
algebras see e.g. \cite{HLecture08}.\\

\begin{definition} Assume we are over the base field $\C$. 
  A \emph{\ydmP} $M$ over a finite abelian group $\Gamma$
  is a $\Gamma$-graded vector space, $M=\bigoplus_{g\in \Gamma} M_g$
  with a $\Gamma$-action on $M$ such that $g.M_h=M_{h}$. 
\end{definition}

The category of \ydms form a braided category. Let $M$ be an $n$-dimensional
\ydm over the field $\C$, then we may choose a homogeneous vector space basis
$v_i$ with grading some $g_i\in\Gamma$ and express the action via
$g_i.v_j=q_{ij}v_j$ for some $q_{ij}\in\C^\times$. Then the braiding of $M$ 
has the form 
$$v_i\otimes v_j \mapsto q_{ij}\;v_j\otimes v_i$$

\begin{definition}
  Consider the tensor algebra $T(M)$, which
  can be identified with the algebra of words in the letters
  $v_i$ and is again a $\Gamma$-\ydmP. We can uniquely obtain
  \emph{skew derivations} $\partial_i:\;T(M)\rightarrow T(M)$ by
  $$\partial_k(1)=0
  \qquad\partial_k(v_l)=\delta_{kl}1
  \qquad\partial_k(x\cdot y)=\partial_k(x)\cdot (g_k.y)+x\cdot \partial_k(y)$$
  The \emph{Nichols algebra} $\B(M)$ is the
  quotient of $T(M)$ by the largest
  homogeneous ideal $\mathfrak{I}$ in degree $\geq 2$, invariant under
  all $\partial_k$. It is a Hopf algebra in the braided category of
  $\Gamma$-\ydmP. 
\end{definition}

Heckenberger classified all finite-dimensional Nichols algebras over
finite abelian groups $\Gamma$ in \cite{Heck09}. In the present article, we
only need the following examples:\\

Let $\Phi^+$ be the set of positive roots for a finite-dimensional complex
semisimple Lie algebra $\g$ of rank $n$ and normalized Killing form $(,)$ as in
the Lie theory preliminaries. Let $q$ be a primitive $\ell$-th root of unity.
Then the \ydm defined by the braiding matrix $q_{ij}:=q^{(\alpha_i,\alpha_j)}$
has a finite-dimensional Nichols algebra $\B(M)$ iff all
$q_{ii}=q^{d_{\alpha_i}}$ are $\neq 1$. More precisely, $\B(M)$ has a PBW-like
basis associated to $\Phi$ and especially the dimension is
$$\dim(\B(M))=\prod_{\alpha\in \Phi^+} \ord(q^{(\alpha,\alpha)})
=\prod_{\alpha\in \Phi^+} \ord(q^{d_\alpha})$$ 
\emph{unless} the case $\g=G_2,\ell=4$, which is excluded in
Heckenberger's list entry for $G_2$ (\cite{Heck06} Figure 1 Row 11). Indeed,
the braiding matrix is in this case equal to the braiding matrix for
$A_2$, namely
$q_{ij}=\begin{pmatrix}
   -1 & \sqrt{-1} \\
   \sqrt{-1} & -1 
 \end{pmatrix}$ and $\dim(\B(M))=2\cdot 2\cdot 2$ is less than expected for
$G_2$.\\

The condition $q_{ii}\neq 1$ and the exceptional case
$\g=G_2,\ell=4$ will play a prominent role in the present article.
It will be the direct cause why the Borel part of the small quantum group
$u_q(\g)^+$ is for small $\ell$ \emph{not} isomorphic to the corresponding
Nichols algebra $\B(M)$ as one might expect.

\subsection{Coradical extensions}\label{sec_ExtensionOfScalars}

We introduce the following tool without referring to quantum groups. It will
later allow us to quickly transport results about the adjoint
rational form ($\Lambda=\Lambda_R$) in literature to arbitrary $\Lambda$.\\

  Suppose $H$ a Hopf algebra over a commutative ring $\k$ with group of
  grouplikes $G(H)$ and fix some subgroup the group of 
  grouplikes $\Lambda_R \subset G(H)$.
  Let $\Lambda\rhd \Lambda_R$ be a group containing $\Lambda_R$ normally and
  let $\rho:\k[\Lambda]\otimes H\to H$ be an action, such that
\begin{itemize}
 \item The action $\rho$ turns $H$ into a $\k[\Lambda]$-module Hopf algebra.
 \item The action $\rho$ restricts on $\k[\Lambda_R]\subset \k[\Lambda]$ to the
  adjoint representation $\rho_R$  of the Hopf subalgebra $\k[\Lambda_R]\subset
  H$.
 \item The action $\rho$ restricts on $\k[\Lambda_R]\subset H$ to the
  adjoint representation $\rho_\Lambda$ of $\k[\Lambda]$ on the Hopf subalgebra
  $\k[\Lambda_R]$, given by conjugacy action of the group $\Lambda$ on the
normal  subgroup $\Lambda_R$.
\end{itemize}

\begin{theorem}\label{thm_ExtensionOfScalars}
  The Hopf algebra structure
  on the smash-product $\k[\Lambda]\ltimes H$ factorizes to a
  Hopf algebra structure on the vector space $\k[\Lambda]\otimes_{\k[\Lambda_R]}
  H$
  $$H_{\Lambda}:=\k[\Lambda]\ltimes_{\k[\Lambda_R]} H$$
  where the left-/right $\k[\Lambda_R]$-module structures are the
  multiplication with respect to the inclusions 
  $\k[\Lambda_R]\subset \k[\Lambda]$ and $\k[\Lambda_R]\subset G(H)\subset H$
\end{theorem}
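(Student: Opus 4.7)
The plan is to realize $H_\Lambda$ as a Hopf algebra quotient of the smash product $A:=\k[\Lambda]\ltimes H$. By hypothesis $H$ is a $\k[\Lambda]$-module Hopf algebra, so $A$ carries the standard smash Hopf algebra structure in which $\k[\Lambda]$ and $H$ both embed as Hopf subalgebras, subject to the commutation rule $aha^{-1}=a\triangleright h$ for $a\in\Lambda$, $h\in H$. For each $k\in\Lambda_R$ there are two distinguished images in $A$: the element $k_\Lambda$ via $\Lambda_R\subset\Lambda$, and the element $k_H$ via $\Lambda_R\subset G(H)\subset H$. Set $J:=\mathrm{span}\{k_\Lambda-k_H:k\in\Lambda_R\}$ and let $I\subset A$ be the two-sided ideal generated by $J$. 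I will show that $I$ is a Hopf ideal and that $A/I$ is canonically $\k[\Lambda]\otimes_{\k[\Lambda_R]}H$ as a vector space.

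The first and main step is to identify $I$ with the subspace $\k[\Lambda]\cdot J\cdot H$, which is precisely the defining subspace of the balanced tensor product, since the element $gk\otimes h-g\otimes kh$ (with the two $k$'s sitting in $\k[\Lambda]$ and in $H$ respectively) becomes $g(k_\Lambda-k_H)h$ in the smash product. Both compatibility axioms enter here. Using the first axiom (that the action of $k\in\Lambda_R$ on $H$ equals $\mathrm{ad}_{k_H}$), and noting that $k_H(k^{-1}\triangleright h)=k_H\cdot k_H^{-1}hk_H=hk_H$, a short calculation yields
\[
h(k_\Lambda-k_H)=(k_\Lambda-k_H)(k^{-1}\triangleright h),
\]
so $H\cdot J\subset J\cdot H$. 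Using the second axiom (that the action of $g\in\Lambda$ on $k_H\in\k[\Lambda_R]\subset H$ is the group-conjugation $(g^{-1}kg)_H$), one gets $(k_\Lambda-k_H)g=g(k'_\Lambda-k'_H)$ with $k':=g^{-1}kg\in\Lambda_R$, hence $J\cdot\k[\Lambda]\subset\k[\Lambda]\cdot J$. Combined with the general smash relation $H\cdot\k[\Lambda]\subset\k[\Lambda]\cdot H$ these inclusions give $A\cdot J\cdot A\subset\k[\Lambda]\cdot J\cdot H$, and the reverse inclusion is immediate.

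The second step verifies that $I$ is a Hopf ideal. Since $k_\Lambda$ and $k_H$ are both grouplike in $A$,
\[
\Delta(k_\Lambda-k_H)=k_\Lambda\otimes k_\Lambda-k_H\otimes k_H=(k_\Lambda-k_H)\otimes k_\Lambda+k_H\otimes(k_\Lambda-k_H)\in I\otimes A+A\otimes I,
\]
together with $\epsilon(k_\Lambda-k_H)=0$ and $S(k_\Lambda-k_H)=(k^{-1})_\Lambda-(k^{-1})_H\in J$. Hence $I$ is stable under $\Delta$, $\epsilon$ and $S$, and $A/I=\k[\Lambda]\otimes_{\k[\Lambda_R]}H$ inherits the claimed Hopf algebra structure.

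The principal obstacle is the first step: one must check that the two-sided ideal generated by $J$ imposes exactly the relations defining the balanced tensor product and nothing more. The two compatibility axioms on the action $\rho$ are tailor-made for this—without either one, commuting a generator $k_\Lambda-k_H$ past an element of $H$ or $\k[\Lambda]$ would produce new relations and the quotient would collapse—so the verification amounts to carrying out the bookkeeping cleanly in the smash commutation rules, as indicated above.
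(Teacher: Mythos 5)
Your proof is correct, and it takes a genuinely different (and arguably more structural) route than the paper's. The paper directly verifies that each structure map of the smash product $A=\k[\Lambda]\ltimes H$ descends along the quotient map $\phi:\k[\Lambda]\otimes_\k H\twoheadrightarrow\k[\Lambda]\otimes_{\k[\Lambda_R]}H$, via four separate explicit computations (for $\mu$, $1$, $\Delta$, $\epsilon$). You instead package all of this into a single statement: the two-sided ideal $I$ generated by the grouplike differences $k_\Lambda-k_H$, $k\in\Lambda_R$, is a Hopf ideal that coincides exactly with $\ker\phi=\k[\Lambda]\cdot J\cdot H$. Your commutation identities $h(k_\Lambda-k_H)=(k_\Lambda-k_H)(k^{-1}\triangleright h)$ (from the paper's second bullet, that $\rho|_{\k[\Lambda_R]}$ is the adjoint action of $k_H$) and $(k_\Lambda-k_H)g=g(k'_\Lambda-k'_H)$ (from the third bullet, conjugation of $\Lambda$ on $\Lambda_R$) are exactly where the two compatibility hypotheses enter, mirroring what the paper's term-by-term calculation uses but making their role transparent. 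The payoff is that the Hopf-ideal check reduces to a one-line computation on grouplikes, automatically propagating through the generated ideal since $\Delta,\epsilon$ are (anti-)algebra maps and $S$ an anti-algebra map. The paper's approach buys nothing extra here except explicitness; yours is shorter and conceptually cleaner. One small cosmetic point: what you call the ``first'' and ``second'' axiom are the paper's second and third bullets respectively, but the content is used correctly.
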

  Especially the choice $\Lambda=\Lambda_R$ recovers 
  $H_{\Lambda_R}:=\k[\Lambda_R]\otimes_{\k[\Lambda_R]} H=H$.
\begin{proof}
  The smash-product of two Hopf algebras $H'_{\Lambda}:=\k[\Lambda]\ltimes H$
  with respect to an action $\rho$ on the Hopf algebra $H$ is the vector spaces
  $H'_\Lambda:=\k[\Lambda]\otimes_{\k} H$
  with the coalgebra structure of the tensor product and
  the multiplication $\mu_{H'_\Lambda}$ given for
  $g,h\in \Lambda, x,y\in H$ by:
  $$\mu_{H'_{\Lambda}}\left((g\otimes x)\otimes (h\otimes y)\right)
  =gh^{(1)}\otimes (\rho(S(h^{(2)})\otimes x)\cdot y)
  =gh\otimes (\rho(h^{-1}\otimes x)\cdot y)$$
  We have to show that the structures $1_{H'_\lambda},\mu_{H'_\lambda},
  \Delta_{H'_\lambda},\epsilon_{H'_\lambda}$  factorize  over the
  surjection
  $$\phi:\;H_\Lambda':=\k[\Lambda]\otimes_{\k} H\longrightarrow
  \k[\Lambda]\otimes_{\k[\Lambda_R]} H=:H_\Lambda$$
  \begin{itemize}
   \item The multiplication $\mu_{H'_\lambda}$ factorizes as follows: For all
    $g,h\in \Lambda,t\in\Lambda_R,x,y\in H$ we have 
    \begin{align*}
      (\phi\circ\mu_{H'_\lambda})((gt\otimes x)\cdot (h\otimes y))
      &=gth\otimes_{\k[\Lambda_R]} \rho(h^{-1}\otimes x)y\\
      &=gh\otimes_{\k[\Lambda_R]}\rho(h^{-1}th\otimes \rho(h^{-1}\otimes x)) (h^{-1}th)y\\
      &=gh\otimes_{\k[\Lambda_R]}\rho(h^{-1}\otimes\rho_R(t\otimes x)) \rho(h^{-1}\otimes t)y\\   
      &=gh\otimes_{\k[\Lambda_R]} \rho(h^{-1}\otimes tx)y\\
      &=(\phi\circ\mu_{H'_\lambda})((g\otimes tx)\cdot (h\otimes y))
    \end{align*}
    On the other hand we have
    \begin{align*}
      (\phi\circ\mu_{H'_\lambda})((g\otimes x)\cdot (ht\otimes y))
      &=ght\otimes_{\k[\Lambda_R]} \rho((ht)^{-1}\otimes x)y\\
      &=ght\otimes_{\k[\Lambda_R]} t^{-1}\rho(h^{-1}\otimes x)ty\\
      &=gh\otimes_{\k[\Lambda_R]} \rho(h^{-1}\otimes x)ty\\  
      &=(\phi\circ\mu_{H'_\lambda})((g\otimes x)\cdot (h\otimes ty))
    \end{align*}

   \item The unit $1_{H'_\lambda}$ maps to $\phi(1_{H'_\lambda})\in H_\Lambda$.
    \item The comultiplication $\Delta_{H'_\lambda}$ factorizes as follows: For
    all $g,h\in \Lambda,t\in\Lambda_R,x,y\in H$ we have $t$ grouplike and hence
    \begin{align*}
      (\phi\circ\Delta_{H'_\lambda})(gt\otimes x)
      &=\left(gt\otimes_{\k[\Lambda_R]} x^{(1)}\right)
      \otimes \left(gt\otimes_{\k[\Lambda_R]} x^{(2)}\right)\\
      &=\left(g\otimes_{\k[\Lambda_R]} tx^{(1)}\right)
      \otimes \left(g\otimes_{\k[\Lambda_R]} tx^{(2)}\right)\\
      &=\left(g\otimes_{\k[\Lambda_R]} (tx)^{(1)}\right)
      \otimes \left(g\otimes_{\k[\Lambda_R]} (tx)^{(2)}\right)\\
      &= (\phi\circ\Delta_{H'_\lambda})(g\otimes tx)
    \end{align*}
   \item The counit $\epsilon_{H'_\lambda}$ factorizes as follows: For all
    $g\in \Lambda,x\in H$ we have 
    \begin{align*}
      (\phi\circ\epsilon_{H'_\lambda})(gt\otimes x)
      &=\epsilon_{H'_\lambda}(gt)\cdot\epsilon_{H'_\lambda}(x)\\
      &=\epsilon_{H'_\lambda}(g)\cdot\epsilon_{H'_\lambda}(tx)\\
      &=(\phi\circ\epsilon_{H'_\lambda})(gt\otimes x)
   \end{align*}
  \end{itemize}
\end{proof}

\section{Different forms of quantum groups}\label{sec_forms}

We recall several Hopf algebras associated to $\g$ over various commutative
rings $\k$.

\begin{remark}
    The following notion is added for completeness and not used in the
    sequel: There is a so-called \emph{topological Hopf algebra}
    $U^{\C[[q]]}_q(\g)$ over the ring of formal power series $\k=\C[[q]]$
    cf. \cite{CP95} 6.5.1. It was defined by Drinfel'd (1987) and
    Jimbo (1985).
\end{remark}

\subsection{The rational forms}

We next define the \emph{rational form} $U^{\Q(q)}_q(\g)$. There are in fact
several rational forms $U^{\Q(q)}_q(\g,\Lambda)$ associated to the
$U^{\C[[q]]}_q(\g)$ that differ by a
choice of a subgroup $\Lambda_R\subset \Lambda\subset \Lambda_W$ resp. a choice
of a subgroup in the \emph{fundamental group} $\pi_1:=\Lambda_W/\Lambda_R$. This
corresponds to choosing a Lie group associated to the Lie algebra $\g$; we call
the two extreme cases $\Lambda=\Lambda_W$ the \emph{simply-connected form} and
$\Lambda=\Lambda_R$ the usual \emph{adjoint form} (e.g. $SL_2$ vs.
$PSL_2$), see e.g. \cite{CP95} Sec. 9.1 or \cite{Lusz94}.

\begin{definition}\label{def_RationalForm}
  For each abelian group $\Lambda$ with $\Lambda_R\subset \Lambda\subset
  \Lambda_W$ we define the \emph{rational form} $U^{\Q(q)}_q(\g,\Lambda)$ over
  the ring of rational functions $\k=\Q(q)$ as follows:\\

  As algebra, let $U^{\Q(q)}_q(\g,\Lambda)$ be generated by the group ring
  $\k[\Lambda]$ spanned by $K_\lambda,\lambda\in\Lambda$ and additional
  generators $E_{\alpha_i},F_{\alpha_i}$ for each simple root $\alpha_i, i\in
  I$ with relations:
  \begin{align*}
    K_{\lambda}E_{\alpha_i}K_{\lambda}^{-1}
    &=q^{(\lambda,\alpha_i)}E_{\alpha_i}
    ,\; \forall\lambda\in\Lambda
    \qquad \mbox{\emph{(group action)}}\\
    K_{\lambda}F_{\alpha_i}K_{\lambda}^{-1}
    &=\bar{q}^{(\lambda,\alpha_i)}F_{\alpha_i}
    ,\;\forall\lambda\in\Lambda
    \qquad \mbox{\emph{(group action)}}\\
    [E_{\alpha_i},F_{\alpha_j}]
    &=\delta_{i,j}\cdot\frac{K_{\alpha_i}-K_{\alpha_i}^{-1}}
      {q_{\alpha_i}-q_{\alpha_i}^{-1}}
    \qquad \mbox{\emph{(linking)}}
    \end{align*}
    and two sets of \emph{Serre-relations} for any $i\neq j\in I$
    \begin{align*}
    \sum_{r=0}^{1-a_{ij}} (-1)^r
    \begin{bmatrix}1-a_{ij}\\ r\end{bmatrix}_{q^{d_i}}
    E_{\alpha_i}^{1-a_{ij}-r}E_{\alpha_j}E_{\alpha_i}^{r}
    &=0\\
    \sum_{r=0}^{1-a_{ij}} (-1)^r 
    \begin{bmatrix}1-a_{ij}\\ r\end{bmatrix}_{\bar{q}^{d_i}}
    F_{\alpha_i}^{1-a_{ij}-r}F_{\alpha_j}F_{\alpha_i}^{r}
    &=0
  \end{align*}
  where $\bar{q}:=q^{-1}$, the $\begin{bmatrix}n\\k\end{bmatrix}_{q^{d_i}}$
  are the quantum binomial coefficients (see \cite{Lusz94} Sec. 1.3)
   and by definition
  $q^{(\alpha_i,\alpha_j)}=(q^{d_i})^{a_{ij}}$.
  As a coalgebra, let the \emph{coproduct} $\Delta$, the \emph{counit}
  $\epsilon$ and the \emph{antipode} $S$ be defined on the group-Hopf-algebra
  $\k[\Lambda]$ as usual 
  $$\Delta(K_\lambda)=K_\lambda\otimes K_\lambda\qquad
  \epsilon(K_\lambda)=1\qquad
  S(K_\lambda)=K_{\lambda}^{-1}=K_{-\lambda}$$ 
  and on the
  additional generators $E_{\alpha_i},F_{\alpha_i}$ for each simple root
  $\alpha_i, i\in I$ as follows:
  \begin{align*}
    \Delta(E_{\alpha_i})
    =E_{\alpha_i}\otimes K_{\alpha_i}+1\otimes E_{\alpha_i} 
    &\qquad \Delta(F_{\alpha_i})
    =F_{\alpha_i}\otimes 1+K_{\alpha_i}^{-1}\otimes F_{\alpha_i}\\
    S(E_{\alpha_i})=-E_{\alpha_i}K_{\alpha_i}^{-1}
    &\qquad S(F_{\alpha_i})=-K_{\alpha_i}F_{\alpha_i}\\
    \epsilon(E_{\alpha_i})=0
    &\qquad \epsilon(F_{\alpha_i})=0
  \end{align*}
\end{definition}
\begin{theorem}[Rational Form]\label{thm_RationalForm}
  $U^{\Q(q)}_q(\g,\Lambda)$ is a Hopf algebra over the field
  $\k=\Q(q)$. For arbitrary $\Lambda$ using the construction in 
  Theorem \nref{thm_ExtensionOfScalars} we have
  $$U^{\Q(q)}_q(\g,\Lambda)
    =\k[\Lambda]\ltimes_{\k[\Lambda_R]}U^{\Q(q)}_q(\g,\Lambda_R)$$
  Moreover, we have a \emph{triangular decomposition}: Consider the
  subalgebras  $U^{\Q(q),+}_q$ generated by the $E_{\alpha_i}$ and 
  $U^{\Q(q),-}_q$  generated by the $F_{\alpha_i}$ and 
  $U^{\Q(q),0}_q=\k[\Lambda]$ spanned by  the $K_{\lambda}$. Then
  multiplication in $U_q^{\Q(q)}$ induces an isomorphism of vector spaces:
  $$U^{\Q(q),+}_q\otimes U^{\Q(q),0}_q\otimes U^{\Q(q),-}_q
  \stackrel{\cong}{\longrightarrow}U^{\Q(q)}_q$$
\end{theorem}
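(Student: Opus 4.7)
The plan is to establish the three assertions in the natural order: Hopf algebra structure, extension from adjoint to arbitrary $\Lambda$, and triangular decomposition. I focus on the adjoint case $\Lambda=\Lambda_R$ first, since Theorem \nref{thm_ExtensionOfScalars} will then reduce the general statement to a verification of compatibility data.

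For the Hopf algebra structure with $\Lambda=\Lambda_R$, I would define $U_q^{\Q(q)}(\g,\Lambda_R)$ as a quotient of the free algebra generated by the $K_\lambda^{\pm 1},E_{\alpha_i},F_{\alpha_i}$, and then check that the prescribed coproduct, counit and antipode descend to this quotient. Concretely, one verifies on generators that $\Delta$ is an algebra map when evaluated on each family of defining relations: the group relations are immediate since $K_\lambda$ is grouplike; the Cartan (group-action) relations follow because $K_\lambda\otimes K_\lambda$ commutes past $E_{\alpha_i}\otimes K_{\alpha_i}+1\otimes E_{\alpha_i}$ with the correct scalar $q^{(\lambda,\alpha_i)}$; the linking relation reduces to a short calculation with $(K_{\alpha_i}-K_{\alpha_i}^{-1})/(q_{\alpha_i}-q_{\alpha_i}^{-1})$ being a $(K_{\alpha_i},K_{\alpha_i}^{-1})$-primitive element; and the quantum Serre relations follow from a standard identity for $q$-binomial coefficients. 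The counit and antipode compatibilities are checked analogously on generators. This part is routine bookkeeping and is well documented (e.g.\ \cite{Lusz94}, Ch.~3), so I would mostly cite it.

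To pass from $\Lambda_R$ to an arbitrary $\Lambda_R\subset\Lambda\subset\Lambda_W$, I invoke Theorem \nref{thm_ExtensionOfScalars} with $H=U_q^{\Q(q)}(\g,\Lambda_R)$ and the action $\rho$ of $\k[\Lambda]$ on $H$ determined by $\rho(K_\lambda\otimes E_{\alpha_i})=q^{(\lambda,\alpha_i)}E_{\alpha_i}$, $\rho(K_\lambda\otimes F_{\alpha_i})=q^{-(\lambda,\alpha_i)}F_{\alpha_i}$, and trivially on $\k[\Lambda_R]\subset H$. I need to confirm the three hypotheses: $\rho$ is by Hopf algebra automorphisms (because it preserves all defining relations, which reduces to $(\lambda,\alpha_i)\in\Z$ provided by the pairing $\Lambda_W\times\Lambda_R\to\Z$); its restriction to $\k[\Lambda_R]$ recovers the adjoint action inside $H$ (immediate from the Cartan relations); and its restriction to $\k[\Lambda_R]\subset H$ is conjugation in the abelian group $\Lambda$ (trivial, as $\Lambda$ is abelian, so adjoint action on $\k[\Lambda_R]$ is trivial, matching $\rho|_{\k[\Lambda_R]}$). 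Theorem \nref{thm_ExtensionOfScalars} then produces the Hopf algebra $\k[\Lambda]\ltimes_{\k[\Lambda_R]}H$, and comparing generators and relations with Definition \nref{def_RationalForm} yields an isomorphism with $U_q^{\Q(q)}(\g,\Lambda)$.

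For the triangular decomposition, surjectivity of the multiplication map $U_q^{+}\otimes U_q^{0}\otimes U_q^{-}\to U_q$ follows by straightening: every product of generators can, using the Cartan and linking relations, be rewritten as a sum of $E$-$K$-$F$ ordered monomials. The main obstacle, as usual, is injectivity, i.e.\ the linear independence of these ordered monomials. I would prove this along the classical Lusztig/Jantzen line by constructing a faithful representation of $U_q^{\Q(q)}$ on the free left $U_q^{-}$-module $M=U_q^{-}\otimes \k[\Lambda]$ (a universal Verma-style module): the $F_{\alpha_i}$ act by left multiplication, the $K_\lambda$ act via the Cartan characters on the second factor, and the $E_{\alpha_i}$ act by a skew-derivation determined inductively so as to satisfy the linking relation. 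One checks that all defining relations of $U_q^{\Q(q)}$ are satisfied on $M$ (the Serre relations for $E$ follow from the analogous identities for skew derivations, cf.\ the Nichols algebra formalism in Section \nref{ssec_NicholsAlgebras}). Since the induced map $U_q^{+}\otimes U_q^{0}\otimes U_q^{-}\to M$, $x\otimes k\otimes y\mapsto x\cdot(y\otimes k)$, is already injective by construction, injectivity of the multiplication map on $U_q$ follows. The extension from $\Lambda_R$ to $\Lambda$ then gives the triangular decomposition in general by tensoring over $\k[\Lambda_R]$ with $\k[\Lambda]$.
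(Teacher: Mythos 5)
Your proof follows essentially the same route as the paper: cite the classical result (Lusztig/Jantzen) for the adjoint case $\Lambda=\Lambda_R$, then pass to arbitrary $\Lambda$ via Theorem \nref{thm_ExtensionOfScalars} using exactly the action $\rho$ the paper uses, and derive the triangular decomposition for general $\Lambda$ by tensoring up over $\k[\Lambda_R]$. The only divergence is cosmetic: where the paper simply cites \cite{Jan03} for the adjoint-case Hopf-algebra axioms and triangular decomposition, you sketch the standard arguments (coproduct on generators, Verma-style module for injectivity) rather than citing them wholesale; this is the same underlying method with a little more of the well-known detail spelled out.
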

\begin{proof}
  The case of the adjoint form $\Lambda=\Lambda_R$ is classical, see e.g.
  \cite{Jan03} II, H.2 \&  H.3. In principle, this and later proofs work
  totally analogous for arbitrary  $\Lambda_R\subset \Lambda\subset \Lambda_W$,
  but to connect them directly to results in literature without repeating
  everything, we deduce the case of arbitrary $\Lambda$ from $\Lambda=\Lambda_R$
  and the construction in Section \nref{sec_ExtensionOfScalars}:\\
 
  Let $\k=\Q(q)$, take $\Lambda_R\subset \Lambda\subset \Lambda_W$ an abelian
  group and let $H=U_q^{\Q(q)}(\g):=U_q^{\Q(q)}(\g,\Lambda_R)$ be the adjoint
  form with smallest $\Lambda=\Lambda_R$. Define an action $\rho$ of
  $\k[\Lambda]$ on $H$ given by
  \begin{align*}
    \rho(K_{\lambda}\otimes E_{\alpha_i})
    &=q^{(\lambda,\alpha_i)}E_{\alpha_i}\\
    \rho(K_{\lambda}\otimes F_{\alpha_i})
    &=\bar{q}^{(\lambda,\alpha_i)}F_{\alpha_i}
  \end{align*}
Then certainly the restriction of this action to $\k[\Lambda_R]\subset
\k[\Lambda]$ is the adjoint action in Definition \nref{def_RationalForm} for
$H=U_q^{\Q(q)}(\g,\Lambda_R)$ and the restriction to $\k[\Lambda_R]\subset H$ is
trivial ($\Lambda$ is an abelian group). Hence we can apply extension of
scalars by an abelian group in Theorem \nref{thm_ExtensionOfScalars} and yield
a Hopf algebra 
$$H_\Lambda:=\k[\Lambda]\ltimes_{\k[\Lambda_R]} H$$
Denote the elements $K_\lambda\otimes_{\k[\Lambda_R]} 1$ by $K_\lambda$,
especially for $\alpha\in \Lambda_R$ we have $K_\alpha=1\otimes_{\k[\Lambda_R]}
K_\alpha$ with $K_\alpha\in U_q^{\Q(q)}(\g,\Lambda_R)$. Then it is clear that
these elements fulfill the relations given in the previous Definition
of $U_q^{\Q(q)}(\g,\Lambda)$ for general $\Lambda$. It follows from the
triangular decomposition of $H$ that this is an isomorphism of vector spaces as
$$\k[\Lambda]\otimes_{\k[\Lambda_R]} \k[\Lambda_R]\cong \k[\Lambda]$$
Especially, $U_q^{\Q(q)}(\g,\Lambda)$ defined above is a Hopf algebra with a
triangular decomposition as vector spaces
$$U^{\Q(q),+}_q\otimes U^{\Q(q),0}_q\otimes U^{\Q(q),-}_q
  \stackrel{\cong}{\longrightarrow}U^{\Q(q)}_q$$
with $U^{\Q(q),0}_q\cong \k[\Lambda]$ and $U^{\Q(q),\pm}_q$ independent of
the choice of $\Lambda$.
\end{proof}

A tool of utmost importance has been introduced by Lusztig, see \cite{Jan03}
H.4:

\begin{definition}
  Fix a reduced expression $s_{i_1}\cdots s_{i_t}$ of the longest element in the
  Weyl group $\W(\g)$ in terms of reflections $s_i$ on simple roots
  $\alpha_i$. 
  \begin{enumerate}
   \item There
  exist algebra automorphisms $T_i:U_q^{\Q(q)}(\g,\Lambda)\to
  U_q^{\Q(q)}(\g,\Lambda)$, such that the
  action restricted to $K_\lambda\in\Lambda\subset \Lambda_W$ is the reflection
  of the weight $\lambda$ on $\alpha_i$.
    \item Every positive root $\beta$ has a unique expression
    $\beta=s_{i_1}\cdots s_{i_{k-1}}\alpha_{i_k}$ for some index $k$. This
    defines a
    total ordering on the set of positive roots $\Phi^+$ and the reversed
    ordering on $\Phi^-$. 
    Define the \emph{root vectors} for a root $\beta\in\Phi^+$ by 
    $$E_{\beta}:= T_{i_1}\cdots T_{i_{k-1}}E_{\alpha_{i_k}}$$
    $$F_{\beta}:= T_{i_1}\cdots T_{i_{k-1}}F_{\alpha_{i_k}}$$
  \end{enumerate}
\end{definition}
With these definitions, Lusztig establishes a PBW vector space basis:
\begin{theorem}[PBW-basis]\label{thm_RationalFormPBW}
  Multiplication in $U_q^{\Q(q)}$ induces an isomorphism of $\k$-vector
  spaces for the field $\k=\Q(q)$
  $$\k[\Lambda] 
  \bigotimes_{\alpha\in \Phi^+} \k[E_\alpha]
  \bigotimes_{-\alpha\in \Phi^-} \k[F_\alpha]
  \stackrel{\cong}{\longrightarrow} U_q^{\Q(q)}(\g,\Lambda)$$
  where the orderings on $\Phi^+,\Phi^-$ are as above.
\end{theorem}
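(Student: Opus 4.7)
The plan is to reduce the statement to the classical adjoint--form PBW theorem and then transport it to arbitrary $\Lambda$ via the scalar extension already exploited in the proof of Theorem \nref{thm_RationalForm}. The work separates cleanly into one $\Lambda$--independent fact about the Borel parts, and one bookkeeping observation about the Cartan factor.

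First I would invoke the triangular decomposition just proven: by Theorem \nref{thm_RationalForm} we have an isomorphism $U^{\Q(q),+}_q\otimes \k[\Lambda]\otimes U^{\Q(q),-}_q\stackrel{\cong}{\to} U_q^{\Q(q)}(\g,\Lambda)$ of $\k$--vector spaces, and the Borel halves $U^{\Q(q),\pm}_q$ are generated only by the $E_{\alpha_i}$ respectively the $F_{\alpha_i}$, hence are independent of the choice of $\Lambda$. Next I would check that the braid operators $T_i$ preserve these halves: acting on a generator $E_{\alpha_j}$, Lusztig's formula produces a polynomial in the $E_{\alpha_k}$ alone, with no Cartan contribution, and symmetrically for the $F$'s. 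Consequently the root vectors $E_\beta, F_\beta$ live entirely in $U^{\Q(q),\pm}_q$, and it suffices to prove once and for all that ordered monomials in the $E_\beta$ form a $\k$--basis of $U^{\Q(q),+}_q$ (and analogously on the negative side). Because this statement is entirely inside the $\Lambda$--independent Borel, the case $\Lambda=\Lambda_R$ already implies the general one.

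For the Borel statement itself I would appeal to the classical argument of Lusztig (see \cite{Lusz94} Ch.~40 or \cite{Jan03} 8.24). The key technical input is the Levendorskii--Soibelman commutation formula: for $\beta<\gamma$ in the total order on $\Phi^+$ induced by the fixed reduced expression,
$$E_\gamma E_\beta \;-\; q^{(\beta,\gamma)} E_\beta E_\gamma \;\in\; \sum_{\beta<\beta_{k_1}\le\cdots\le\beta_{k_s}<\gamma}\k\cdot E_{\beta_{k_1}}\cdots E_{\beta_{k_s}},$$
obtained by relating different reduced expressions via braid moves (Matsumoto) and analyzing rank--$2$ parabolic subsystems. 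Iterated application of this straightening rule produces a spanning set of ordered monomials, giving surjectivity of the multiplication map. Linear independence then follows by comparing $\Lambda_R$--graded Hilbert series: the specialization $q\to 1$ recovers the classical PBW basis of $U(\g)^+$, and the ordered monomials in $E_\beta$ specialize to its Poincaré--Birkhoff--Witt basis, so no monomial can be redundant over $\Q(q)$.

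The main obstacle is the rank--$2$ verification underlying the Levendorskii--Soibelman relation, i.e.\ working out the quantum commutators of root vectors in types $A_1\times A_1$, $A_2$, $B_2$, $G_2$; once these are in hand the general case is a purely combinatorial consequence of Matsumoto's theorem. The only $\Lambda$--dependent point---well--definedness of the $T_i$ on the enlarged Cartan part---is handled uniformly by the scalar--extension construction of Theorem \nref{thm_ExtensionOfScalars}: the $T_i$ act on $\k[\Lambda]$ by Weyl reflection of weights, which preserves $\Lambda_W$ and hence any intermediate $\Lambda$, and this action is compatible with the $\k[\Lambda_R]$--balancing in $\k[\Lambda]\ltimes_{\k[\Lambda_R]}H$, exactly as in the proof of Theorem \nref{thm_RationalForm}.
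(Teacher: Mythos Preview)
Your approach matches the paper's: reduce to the classical adjoint case $\Lambda=\Lambda_R$ (which the paper simply cites as \cite{Jan03} H.4, whereas you sketch the Levendorskii--Soibelman straightening argument) and then pass to arbitrary $\Lambda$ via the scalar-extension isomorphism $\k[\Lambda]\otimes_{\k[\Lambda_R]}U_q^{\Q(q)}(\g,\Lambda_R)\cong U_q^{\Q(q)}(\g,\Lambda)$ of Theorem~\nref{thm_RationalForm}. The paper's reduction is slightly more direct---it applies $\k[\Lambda]\otimes_{\k[\Lambda_R]}(-)$ to the full adjoint PBW decomposition in one step---while you first separate off the Borel halves via triangular decomposition and observe they are $\Lambda$-independent; both amount to the same thing.

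One small inaccuracy worth fixing: the braid operators do \emph{not} preserve $U^{\Q(q),+}_q$ in general, since $T_i(E_{\alpha_i})$ involves $F_{\alpha_i}$ and a Cartan factor. What is true (and nontrivial) is that for a reduced word $s_{i_1}\cdots s_{i_t}$ each partial application $T_{i_1}\cdots T_{i_{k-1}}(E_{\alpha_{i_k}})$ lands in $U^{\Q(q),+}_q$; this is precisely the content of the standard theory you cite, so your conclusion stands, but the intermediate sentence should be rephrased.
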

\begin{proof}
  The adjoint case $\Lambda=\Lambda_R$ is classical and in \cite{Jan03}
  H.4. Note that using the relations between $K_\alpha,E_\alpha$ all $K$'s
  can be sorted to the left side. The case of arbitrary
  $\Lambda$ could be derived totally analogously, but it also follows directly
  from the presentation as extension in Theorem
  \nref{thm_RationalForm}. Namely, we have by construction isomorphisms of
  vector spaces
  \begin{align*}
  U_q^{\Q(q)}(\g,\Lambda)
  &\cong \k[\Lambda]\otimes_{\k[\Lambda_R]} U_q^{\Q(q)}(\g,\Lambda_R)\\
  &\cong \k[\Lambda]\otimes_{\k[\Lambda_R]}\k[\Lambda_R] 
  \bigotimes_{\alpha\in\Phi^+} \k[E_\alpha] 
  \bigotimes_{-\alpha\in \Phi^-} \k[F_\alpha]\\
  &\cong \k[\Lambda] 
  \bigotimes_{\alpha\in \Phi^+} \k[E_\alpha] 
  \bigotimes_{-\alpha\in \Phi^-}\k[F_\alpha]
  \end{align*}
\end{proof}

\subsection{Two integral forms}
Next we define two distinct \emph{integral forms}
$U_q^{\Z[q,q^{-1}],\K}(\g,\Lambda)$ and $U_q^{\Z[q,q^{-1}],\L}(\g,\Lambda)$.
These are $\Z[q,q^{-1}]$-subalgebras of $U_q^{\Q(q)}(\g,\Lambda)$ which are
after extension of scalars $\otimes_{\Z[q,q^{-1}]}\Q(q)$ isomorphic to
$U_q^{\Q(q)}(\g,\Lambda)$ as $\Q(q)$-algebras.

\begin{definition} 
(cf. \cite{CP95} Sec. 9.2 and 9.3)\label{def_IntegralForms} Recall
$q_\alpha:=q^{d_\alpha}=q^{(\alpha,\alpha)/2}$.
  \begin{itemize}
    \item The so-called \emph{unrestricted integral form}
    $U_q^{\Z[q,q^{-1}],\K}(\g,\Lambda)$ is generated as a $\Z[q,q^{-1}]$-algebra
    by $\Lambda$ and the following elements in
    $U_q^{\Q(q)}(\g,\Lambda)^{+,-,0}$ 
    $$E_{\alpha},\;F_{\alpha},\;
    \frac{K_{\alpha_i}-K_{\alpha_i}^{-1}}
    {q_{\alpha_i}-q_{\alpha_i}^{-1}}\qquad \forall\alpha\in\Phi^+,i\in I$$ 
    We use the superscript $\K$ in honor of Victor Kac, who has defined and
    studied it in characteristic $p$ in 1967 with Weisfeiler and the present
    form in 1990--1992 with DeConcini and Procesi.
  \item The so-called \emph{restricted integral form} 
    $U_q^{\Z[q,q^{-1}],\L}(\g,\Lambda)$ is generated as a $\Z[q,q^{-1}]$-algebra
    by $\Lambda$ and the following elements in $U_q^{\Q(q)}(\g,\Lambda)^\pm$
    called
    \emph{divided powers}  
    $$E_{\alpha}^{(r)}:=\frac{E_{\alpha}^r}
      {\prod_{s=1}^r\frac{q_\alpha^s-q_\alpha^{-s}}
      {q_\alpha-q_{\alpha}^{-1}}},
    \;F_{\alpha}^{(r)}:=\frac{F_{\alpha}^r}
      {\prod_{s=1}^r\frac{\bar{q}_\alpha^{s}-\bar{q}_\alpha^{-s} }
      {\bar{q}_\alpha-\bar{q}_{\alpha}^{-1}}}
    \qquad \forall \alpha\in\Phi^+,r\geq 0$$
    and by the following elements in $U_q^{\Q(q)}(\g,\Lambda)^0$:
    $$K_{\alpha_i}^{(r)}=\Ksymb{K_{\alpha_i}}{0}{r}
      :=\prod_{s=1}^r\frac{K_{\alpha_i}q_{\alpha_i}^{1-s}
      -K_{\alpha_i}^{-1}q_{\alpha_i }^{s-1}}
      {q_{\alpha_i}^s-q_{\alpha_i}^{-s}}\qquad i\in I$$ 
    We use the superscript $\L$ in honor of Georg Lusztig, who has
    defined and studied it in 1988--1990.
  \end{itemize}
\end{definition}
\begin{theorem}\label{thm_IntegralForm}
 The Lusztig quantum group $U_q^{\Z[q,q^{-1}],\L}(\g,\Lambda)$ is a Hopf
 algebra over the ring
 $\k=\Z[q,q^{-1}]$ and is an integral forms for $U_q^{\Q(q)}(\g,\Lambda)$.
  Hereby for arbitrary $\Lambda$ we have again by  
  Theorem \nref{thm_ExtensionOfScalars}
 $$U_q^{\Z[q,q^{-1}],\L}(\g,\Lambda)
 \cong \k[\Lambda]\ltimes_{\k[\Lambda_R]}U^{\Q(q)}_q(\g,\Lambda_R)$$
\end{theorem}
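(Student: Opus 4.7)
The strategy is to follow the template of Theorem \nref{thm_RationalForm}: first settle the adjoint case $\Lambda=\Lambda_R$ by quoting classical results of Lusztig, then deduce the general case $\Lambda_R\subset\Lambda\subset\Lambda_W$ via the extension-of-scalars construction from Theorem \nref{thm_ExtensionOfScalars}. Because the bulk of the algebraic content is already contained in the adjoint case treated in \cite{Lusz90a}\cite{Lusz90b} (cf.\ \cite{CP95} Sec.~9.3 and \cite{Jan03}), my proof will consist mainly of a citation together with the bookkeeping needed for the non-adjoint forms.

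For $\Lambda=\Lambda_R$ I would verify that the $\Z[q,q^{-1}]$-subalgebra $H:=U_q^{\k,\L}(\g,\Lambda_R)$ generated by $\Lambda_R$, the divided powers $E_\alpha^{(r)},F_\alpha^{(r)}$ and the $K$-symbols $\Ksymb{K_{\alpha_i}}{0}{r}$ is stable under $\Delta$, $\epsilon$ and $S$. The crucial inputs are Lusztig's explicit coproduct formula
$$\Delta\bigl(E_{\alpha_i}^{(r)}\bigr)=\sum_{s=0}^r q_{\alpha_i}^{s(r-s)}\,E_{\alpha_i}^{(r-s)}K_{\alpha_i}^s\otimes E_{\alpha_i}^{(s)},$$
the analogous identity for $F_{\alpha_i}^{(r)}$, and the Vandermonde-type identity expressing $\Delta\bigl(\Ksymb{K_{\alpha_i}}{0}{r}\bigr)$ as a sum of products of $K$-symbols and $K_{\alpha_i}^{\pm 1}$ with coefficients already in $\Z[q,q^{-1}]$. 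Closure under multiplication then rests on Lusztig's higher quantum-Serre and commutator identities for divided powers; these identities, combined with the PBW decomposition in divided root vectors, supply a $\Z[q,q^{-1}]$-basis of $H$ whose base change to $\Q(q)$ recovers the PBW basis from Theorem \nref{thm_RationalFormPBW}, proving that $H$ is indeed an integral form of $U_q^{\Q(q)}(\g,\Lambda_R)$.

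For arbitrary $\Lambda$ I invoke Theorem \nref{thm_ExtensionOfScalars} with $\k=\Z[q,q^{-1}]$ and the $\k[\Lambda]$-action on $H$ given by
$$K_\lambda\cdot E_\alpha^{(r)}=q^{r(\lambda,\alpha)}E_\alpha^{(r)},\qquad K_\lambda\cdot F_\alpha^{(r)}=\bar q^{r(\lambda,\alpha)}F_\alpha^{(r)},$$
extended as the identity on the $K$-symbols. Since $\Lambda$ is abelian, the restriction of this action to $\k[\Lambda_R]\subset H$ is trivial; and its restriction to $\k[\Lambda_R]\subset\k[\Lambda]$ reproduces the adjoint action of $\k[\Lambda_R]$ inside $H$, as one reads off from the defining relations of $H$. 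All three hypotheses of Theorem \nref{thm_ExtensionOfScalars} are satisfied, so $\k[\Lambda]\ltimes_{\k[\Lambda_R]}H$ is a Hopf algebra; its presentation matches Definition \nref{def_IntegralForms}, and the triangular/PBW decomposition follows at once from the identification $\k[\Lambda]\otimes_{\k[\Lambda_R]}\k[\Lambda_R]\cong\k[\Lambda]$ combined with the PBW basis of the adjoint form. Base change to $\Q(q)$ then reproduces the rational form $U_q^{\Q(q)}(\g,\Lambda)$ of Theorem \nref{thm_RationalForm}.

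The main obstacle is the adjoint case, where the combinatorics of quantum binomial coefficients must be controlled carefully to see that no denominators survive; this, however, is precisely the content of Lusztig's original papers and requires no new argument here. The only genuinely new step is the transport via Theorem \nref{thm_ExtensionOfScalars}, which is entirely formal once the hypotheses on the two restrictions of $\rho$ have been checked.
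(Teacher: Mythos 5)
Your proposal is correct and takes essentially the same route as the paper: the paper gives no standalone proof of Theorem~\nref{thm_IntegralForm} but instead cites Lusztig/\cite{CP95} for the adjoint case and notes the integral-form claim ``follows immediately'' from the PBW basis in Theorem~\nref{thm_IntegralFormPBW}, whose proof uses exactly the extension-of-scalars mechanism of Theorem~\nref{thm_ExtensionOfScalars} that you invoke. You simply spell out more explicitly which facts of Lusztig are needed (coproduct of divided powers, closure under $S$, PBW) and verify the hypotheses of the extension theorem, which is a faithful elaboration of the paper's terse argument.
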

A similar result holds for the Kac integral form, see Chari \cite{CP95} Sec.
9.2. Generators and relations for simply-laced $\g$ are discussed in
\cite{CP95} Thm. 9.3.4. The
proof, that the $U_q^{\Z[q,q^{-1}],\K},U_q^{\Z[q,q^{-1}],\L}$ are integral
forms for $U_q^{\Q(q)}$ follows immediately from the remarkable knowledge of a
PBW-basis:

\begin{theorem}[PBW-Basis]\label{thm_IntegralFormPBW}
  For the Lusztig integral form $U_q^{\Z[q,q^{-1}],\L}$ over the commutative
  integral domain $\k=\Z[q,q^{-1}]$,  multiplication induces an isomorphism
  of $\k$-modules

  $$\k[\Lambda/2\Lambda_R]
  \bigotimes_{i\in I}\left(\bigoplus_{r\geq 0} K_{\alpha_i}^{(r)}\k\right)
  \bigotimes_{\alpha\in \Phi^+}\left(\bigoplus_{r\geq 0}E_{\alpha}^{(r)}\k\right)
  \bigotimes_{-\alpha\in \Phi^-}\left(\bigoplus_{r\geq 0} F_{\alpha}^{(r)}\k\right)
  \stackrel{\cong}{\longrightarrow} U_q^{\Z[q,q^{-1}],\L}(\g,\Lambda)$$

  Especially, the Lusztig integral form is free as $\k$-module. Note that the
  group algebra $\k[\Lambda/2\Lambda_R]$ is not contained in
  $U_q^{\Z[q,q^{-1}],\L}$ as an algebra, just  as $\k$-module!
\end{theorem}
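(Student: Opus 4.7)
The plan is to reduce to the adjoint case $\Lambda = \Lambda_R$ and then extend to arbitrary $\Lambda$ via the extension-of-scalars construction (Theorem \nref{thm_ExtensionOfScalars}), exactly as was done for the rational form in Theorem \nref{thm_RationalForm} and for the integral form itself in Theorem \nref{thm_IntegralForm}. The adjoint case is Lusztig's classical theorem, for which I would follow \cite{Lusz94}, Ch.~4, or \cite{Jan03}, Section H.5.

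For $\Lambda = \Lambda_R$, the argument splits into spanning and linear independence. For spanning, a general monomial in the generators $E_\alpha^{(r)}$, $F_\beta^{(s)}$, $K_{\alpha_i}^{(t)}$ and $K_\lambda \in \Lambda_R$ is reordered into a PBW monomial by repeatedly applying (i) the Kac-style bracket identity
$$[E_\alpha^{(r)}, F_\alpha^{(s)}] = \sum_{j=0}^{\min(r,s)} F_\alpha^{(s-j)} \, \Ksymb{K_\alpha}{2j - r - s}{j} \, E_\alpha^{(r-j)},$$
(ii) the quantum Serre-type relations between divided powers for $\alpha \ne \beta$, and (iii) the conjugation of $K_\lambda$ through the divided powers. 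All reordering coefficients must land in $\k = \Z[q,q^{-1}]$, which is ultimately a consequence of the standard integrality and quantum Vandermonde/Pascal identities for the coefficients $\begin{bmatrix}n\\ k\end{bmatrix}_{q^d}$. The unusual factor $\k[\Lambda_R/2\Lambda_R]$ arises because of the identity
$$K_{\alpha_i}^2 = 1 + (q_{\alpha_i} - q_{\alpha_i}^{-1}) \, K_{\alpha_i} \, K_{\alpha_i}^{(1)},$$
so every $K_\mu \in \Lambda_R$ can be reduced modulo $2\Lambda_R$ by absorbing $K_{\alpha_i}^2$-factors into polynomials in the divided powers $K_{\alpha_i}^{(r)}$; only a set of coset representatives of $\Lambda_R / 2\Lambda_R$ contributes freely to the $\k$-module decomposition.

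Linear independence then follows by tensoring with $\Q(q)$ and comparing with the rational PBW basis of Theorem \nref{thm_RationalFormPBW}: each proposed integral PBW monomial is a nonzero $\Q(q)$-multiple of the corresponding rational PBW monomial, so independence in $U_q^{\Q(q)}$ forces independence over $\k$. For general $\Lambda$, Theorem \nref{thm_IntegralForm} gives $U_q^{\Z[q,q^{-1}],\L}(\g,\Lambda) \cong \k[\Lambda] \ltimes_{\k[\Lambda_R]} U_q^{\Z[q,q^{-1}],\L}(\g,\Lambda_R)$, and the identity $\k[\Lambda] \otimes_{\k[\Lambda_R]} \k[\Lambda_R / 2\Lambda_R] \cong \k[\Lambda / 2\Lambda_R]$ (since $\Lambda/\Lambda_R$ is free) converts the adjoint-case PBW decomposition into the claimed one.

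The main obstacle is establishing the Kac bracket identity above together with the integrality of the reordering coefficients for the non-simple root vectors $E_\alpha^{(r)}$, $F_\beta^{(s)}$ constructed via Lusztig's braid operators $T_i$; this requires knowing that the $T_i$ preserve the $\Z[q,q^{-1}]$-form, which is itself a substantial theorem (\cite{Lusz94}, Ch.~41). This is classical, however, and not a genuine obstacle for the present article; the only subtlety specific to the statement as phrased is the bookkeeping of the factor $\k[\Lambda/2\Lambda_R]$, which as explained above is forced by the relation between $K_{\alpha_i}^2$ and $K_{\alpha_i}^{(1)}$.
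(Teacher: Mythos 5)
Your proposal takes essentially the same route as the paper: reduce to the adjoint case $\Lambda=\Lambda_R$, cite Lusztig/Jantzen for that (classical) case, explain the appearance of the factor $\k[\Lambda_R/2\Lambda_R]$ in the zero part, and then extend to arbitrary $\Lambda$ via Theorem \nref{thm_ExtensionOfScalars} exactly as was done for the rational form. You also identify the correct crucial ingredient: that the $T_i$ preserve the integral form, which the paper flags as the real content underlying the adjoint case.

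One sentence in your proposal is literally false and worth correcting: for the zero part, an integral PBW monomial $K_\mu\prod_i K_{\alpha_i}^{(r_i)}$ (with $\mu$ a coset representative of $\Lambda_R/2\Lambda_R$) is \emph{not} a $\Q(q)$-scalar multiple of a single rational PBW monomial $K_\lambda$; it is a Laurent polynomial in the $K_{\alpha_i}$ involving many such monomials. Linear independence over $\k$ does follow from the embedding into $U_q^{\Q(q)}$, but the argument there is a triangular (or more precisely invertible) change of basis in the commutative zero-part ring $\Q(q)[\Lambda_R]$, not a one-to-one rescaling of basis vectors. Your identity $K_{\alpha_i}^2 = 1 + (q_{\alpha_i}-q_{\alpha_i}^{-1})K_{\alpha_i}K_{\alpha_i}^{(1)}$ is the right way to see both why $K_{\alpha_i}^2$ is redundant as a group-like generator and why the change of basis is invertible; the paper instead records this as the observation that the products $\prod_i K_{\alpha_i}^{\delta_i}K_{\alpha_i}^{(r_i)}$ with $\delta_i\in\{0,1\}$ form a $\k$-basis of $U_q^{\Z[q,q^{-1}],\L,0}$. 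Also be aware that the paper's remark ``the PBW-basis theorem does not follow from the rational PBW-basis'' refers to the spanning direction, not to linear independence; you have the distinction right, but the phrasing could mislead a reader into thinking you are contradicting the paper.
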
  
\begin{proof}
  For $\Lambda=\Lambda_R$ it is proven by Lusztig (see \cite{Jan03} H.5) that 
  the sorted monomials in the root vectors $E_\alpha$ resp. $F_\alpha$ for
  $\alpha\in \Phi^+$ form a basis of $U_q^{\Z[q,q^{-1}],\L,\pm}$ as a module
  over the commutative ring  $\k=\Z[q,q^{-1}]$ and that the products
  $\prod_{i\in I} K_{\alpha_i}^{\delta_i} K_{\alpha_i}^{(r_i)}$ with
  $\delta_i\in\{0,1\},r_i\geq 0$ form a $\k$-basis of 
  $U_q^{\Z[q,q^{-1}],\L,0}$. The latter statement is by the commutativity
  equivalent to the statement $U_q^{\Z[q,q^{-1}],\L,0}\cong
  \k[\Lambda/2\Lambda_R] \bigotimes_{i\in I}(\bigoplus_{r\geq 0} K_{\alpha_i}^{(r)}\k)$. 
  Note that the PBW-basis theorem does not follows from the PBW-basis  of the
  rational form. Rather, the proof proceeds parallel and roughly uses  that the
  $T_i$ preserve the chosen generator set
  of $U_q^{\Z[q,q^{-1}],\L}$.\\
  
  The case of arbitrary
  $\Lambda$ could be derived totally analogously, but it also follows directly
  from the presentation in Theorem \nref{thm_IntegralForm}, and is proven
  as in the proof of Theorem \nref{thm_RationalFormPBW}:
  \begin{align*}
  &U_q^{\Z[q,q^{-1}],\L}(\g,\Lambda)\\
  &\cong \k[\Lambda]\otimes_{\k[\Lambda_R]}
    U_q^{\Z[q,q^{-1}],\L}(\g,\Lambda_R)\\
  &\cong \k[\Lambda]\otimes_{\k[\Lambda_R]}\k[\Lambda_R/2\Lambda_R]
    \bigotimes_{i\in I}\left(\bigoplus_{r\geq 0} K_{\alpha_i}^{(r)}\k\right)
    \bigotimes_{\alpha\in  \Phi^+}\left(\bigoplus_{r\geq 0} E_{\alpha}^{(r)}\k\right)
    \bigotimes_{-\alpha\in \Phi^-}\left(\bigoplus_{r\geq 0} F_{\alpha}^{(r)}\k\right)\\
  &\cong \k[\Lambda/2\Lambda_R]
  \bigotimes_{i\in I}\left(\bigoplus_{r\geq 0} K_{\alpha_i}^{(r)}\k\right)
  \bigotimes_{\alpha\in \Phi^+}\left(\bigoplus_{r\geq 0} E_{\alpha}^{(r)}\k\right)
  \bigotimes_{-\alpha\in \Phi^-}\left(\bigoplus_{r\geq 0} F_{\alpha}^{(r)}\k\right)
  \end{align*}
\end{proof}

\subsection{Specialization to roots of unity}

Next we define the \emph{restricted specialization} $U_q^\L(\g,\Lambda)$.
It is a complex Hopf algebra depending on a specific choice $q\in \C^\times$.

\begin{definition} 
(cf. \cite{CP95} Sec. 9.2 and 9.3)\label{def_Specialization}
    The infinite-dimensional complex Hopf algebra 
    $U_q^\L(\g,\Lambda)$ is defined by 
    $$U_q^\L(\g,\Lambda):=U_q^{\Z[q,q^{-1}],\L}\otimes_{\Z[q,q^{-1}]} \C_q$$
    where (by slight abuse of notation) $\C_q=\C$ with the 
    $\Z[q,q^{-1}]$-module structure defined by the specific value
    $q\in\C^\times$.
\end{definition}

Note that we have a PBW-basis in Theorem \nref{thm_IntegralFormPBW}, which
especially shows $U_q^{\Z[q,q^{-1}],\L}$ is free as a $\Z[q,q^{-1}]$-module.
Hence the specialization has an induced vector space basis, the impact of the
specialization is to severely modify the algebra structure, such that e.g.
former powers may become new algebra generators. 

\begin{corollary}\label{cor_SpecializationPBW}
  For the Lusztig quantum group $U_q^{\L}$ over $\C$, multiplication induces
  an isomorphism of $\C$-vector spaces:
  $$\C[\Lambda/2\Lambda_R]
  \bigotimes_{i\in I}\left(\bigoplus_{r\geq 0} K_{\alpha_i}^{(r)}\C\right)
  \bigotimes_{\alpha\in \Phi^+}\left(\bigoplus_{r\geq 0}E_{\alpha}^{(r)}\C\right)
  \bigotimes_{-\alpha\in \Phi^-}\left(\bigoplus_{r\geq 0} F_{\alpha}^{(r)}\C\right)
  \stackrel{\cong}{\longrightarrow} U_q^{\L}(\g,\Lambda)$$
  This PBW-basis will we refined in Lemma \nref{lm_SpecializationPBW}.
\end{corollary}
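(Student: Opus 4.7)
The statement is an immediate corollary of Theorem \nref{thm_IntegralFormPBW}, so the plan is essentially to transport the freeness of the integral form along the specialization functor. My proof would consist of a single observation together with unpacking the definition.

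First, I would recall Definition \nref{def_Specialization}, which presents the Lusztig quantum group as the base change
$$U_q^\L(\g,\Lambda)=U_q^{\Z[q,q^{-1}],\L}(\g,\Lambda)\otimes_{\Z[q,q^{-1}]}\C_q.$$
Then I would invoke Theorem \nref{thm_IntegralFormPBW}, which states that multiplication induces an isomorphism of $\Z[q,q^{-1}]$-modules from the displayed tensor product onto $U_q^{\Z[q,q^{-1}],\L}(\g,\Lambda)$, and in particular that the integral form is free over $\k=\Z[q,q^{-1}]$ with the explicit PBW-basis consisting of ordered monomials in the divided powers $E_\alpha^{(r)}$, $F_\alpha^{(r)}$, the $K_{\alpha_i}^{(r)}$, and representatives of $\Lambda/2\Lambda_R$.

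The key step is then the general fact that base change preserves free modules and their bases: if $M\cong \bigoplus_j \k\cdot m_j$ is free over $\k$ and $\k\to R$ is any ring homomorphism, then $M\otimes_\k R\cong \bigoplus_j R\cdot (m_j\otimes 1)$. Applying this with $R=\C_q$ to the isomorphism supplied by Theorem \nref{thm_IntegralFormPBW} yields the desired $\C$-vector space isomorphism, because tensor product commutes with direct sums and the multiplication map of $U_q^{\Z[q,q^{-1}],\L}$ is $\Z[q,q^{-1}]$-bilinear and hence induces the multiplication map of $U_q^\L$ after base change.

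There is no real obstacle here; the only point worth emphasizing is the warning already made in Theorem \nref{thm_IntegralFormPBW}, namely that $\C[\Lambda/2\Lambda_R]$ appears merely as a vector space direct summand and not as a subalgebra of $U_q^\L(\g,\Lambda)$. Consequently, the isomorphism is one of $\C$-vector spaces only, not of algebras, and I would state this explicitly at the end of the proof to avoid confusion. This also motivates the refinement announced as Lemma \nref{lm_SpecializationPBW}, where the algebra structure of the specialization (in which former powers become independent generators) is analyzed in detail.
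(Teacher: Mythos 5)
Your proof is correct and follows the same route the paper takes: the corollary is derived immediately from Theorem \ref{thm_IntegralFormPBW} via the freeness of the integral form over $\Z[q,q^{-1}]$ and the compatibility of base change $\otimes_{\Z[q,q^{-1}]}\C_q$ with direct sums and multiplication. Your closing remark that this is only a $\C$-vector space isomorphism (not an algebra isomorphism) accurately mirrors the paper's emphasis that specialization severely alters the algebra structure, which is what motivates the refinement in Lemma \ref{lm_SpecializationPBW}.
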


\begin{example}
    For $q=1$ we have a cosmash-product
    $$U_1^\L(\g,\Lambda)\cong \C[\Lambda/2\Lambda]\ltimes U(\g)$$
\end{example}

\section{First properties of the specialization}\label{sec_quantumgroup}

For the rest of the article we assume $q\in\C^\times$ a primitive $\ell$-th root
of unity without restrictions on $\ell$. We study the
infinite-dimensional Lusztig quantum group $U_q^{\L}(\g,\Lambda)$ from
Definition \nref{def_Specialization}, which is a Hopf algebra over $\C$. It was
defined as a specialization of the Lusztig integral form in Definition
\nref{def_IntegralForms} and hence shares the explicit vector space basis given
by Theorem \nref{thm_IntegralFormPBW}.

\subsection{The zero-part}

The zero-part $u_q^\L(\g,\Lambda)^0$ in the triangular decomposition uses
different arguments than the quantum Borel parts. Recall from Corollary
\nref{cor_SpecializationPBW}, that multiplication in $U_q^\L$ induces an
isomorphism of vector spaces

$$\bigotimes_{i\in I} \C[K_{\alpha_i}]/(K_{\alpha_i}^{2\ell_i})
\bigotimes_{i\in I}\left(\bigoplus_{r\geq 0} K_{\alpha_i}^{(r)}\C\right)
=\C[\Lambda/2\Lambda_R]\bigotimes_{i\in I}\left(\bigoplus_{r\geq 0} K_{\alpha_i}^{(r)}\k\right)
\stackrel{\cong}{\longrightarrow} U_q^\L(\g,\Lambda)^0$$

We want to determine the algebra structure of $U_q^{\L,0}$. It is clear from
the definition, that $U_q^{\L,0}$ is a commutative, cocommutative complex Hopf
algebra. Note that by the theorem of Kostant-Cartier (see e.g. \cite{Mont93}
Sec. 5.6) this already
implies it is of the form $\C[G]\otimes U(\mathfrak{h})$
with group of grouplikes $G=G(U_q^{\L,0})$ and $\mathfrak{h}$ an abelian Lie
algebra.

\begin{theorem}\label{thm_ZeroPart}
  With $\ell_i=ord(q_\alpha^2)$ as always we have an isomorphism of Hopf
  algebras
 $$U_q^{\L}(\g,\Lambda)^0\cong \C[\Lambda/2\Lambda^{(\ell)}]\otimes
  U(\mathfrak{h})$$
  with generators 
  $$K_\lambda,\lambda\in \Lambda,\qquad
  H_{\alpha_i}:=\frac{K_{\alpha_i}^{2\ell_i}-1}{q_\alpha^{2\ell_\alpha}-1}$$
\end{theorem}
\begin{proof}		
  Modulo elements $K_{\alpha_i}$, the expression for $H_{\alpha_i}$ can be rewritten as $K_{\alpha_i}^{(\ell_i)}$, which shows that that we have a bijection. The coproduct of the $H_{\alpha_i}$ is easily calculated from definition to be
  $$\Delta(H_{\alpha_i})=K_{\alpha_i}^{2\ell_i}\otimes H_{\alpha_i}+H_{\alpha_i}\otimes 1 =1\otimes H_{\alpha_i}+H_{\alpha_i}\otimes 1$$		
  
  In \cite{Len17} Theorem 3.1 we have computed explicit expressions for $H_{\alpha_i}$ in terms of $K_{\alpha_i}, K_{\alpha_i}^{(\ell_i)}$. Its action on simple modules exhibits a characteristic weight shift. 
\end{proof}

\subsection{The coradical}

The following assertion is known under various restrictions and follows from a
standard argument, see e.g. \cite{Mont93} Lemma 5.5.5. We include it for
completeness in the case of arbitrary $\ell$. It would be interesting to
determine the full coradical filtration.

\begin{lemma}\label{lm_SpecializationCoradical}
  The coradical of the infinite-dimensional Hopf algebra $U_q^\L(\g,\Lambda)$
  is $\C[\Lambda]$. Especially the Hopf algebra is pointed with group
  of grouplikes $\Lambda$. 
\end{lemma}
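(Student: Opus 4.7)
I would apply the standard pointedness criterion (Montgomery \cite{Mont93}, Lemma 5.5.5): a Hopf algebra generated as an algebra by grouplike elements together with skew-primitive elements is pointed, with group of grouplikes equal to the group generated by the grouplike generators.

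By the PBW basis (Corollary \nref{cor_SpecializationPBW}) and the explicit description of the zero part (Theorem \nref{thm_ZeroPart}), the algebra $U_q^\L(\g,\Lambda)$ is generated by the grouplikes $K_\lambda$, $\lambda\in\Lambda$; the simple root vectors $E_{\alpha_i}, F_{\alpha_i}$ (which are $(1,K_{\alpha_i})$- resp.\ $(K_{\alpha_i}^{-1},1)$-skew-primitive by Definition \nref{def_RationalForm}); the primitive elements $K_{\alpha_i}^{-\ell_i}K_{\alpha_i}^{(\ell_i)}$ (primitivity computed inside the proof of Theorem \nref{thm_ZeroPart}); and the divided powers $E_\alpha^{(r)}, F_\alpha^{(r)}$.

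The main obstacle is that the divided powers for $r\geq \ell_\alpha$ are genuine new algebra generators (since $E_\alpha^{\ell_\alpha}=0$ in the specialization) and are not themselves skew-primitive, because
\[
\Delta(E_\alpha^{(r)})=\sum_{s=0}^{r} E_\alpha^{(s)}K_\alpha^{r-s}\otimes E_\alpha^{(r-s)}
\]
has nonzero middle terms. I would handle this by equipping $U_q^\L$ with a filtration assigning weight $r$ to each $E_\alpha^{(r)}, F_\alpha^{(r)}$ and weight $0$ to all of $U_q^{\L,0}$. The displayed coproduct shows this is a Hopf algebra filtration, and in the associated graded each $E_\alpha^{(r)}$ becomes skew-primitive modulo lower filtration, so Montgomery's criterion applies to $\gr(U_q^\L)$. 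A standard lifting argument then transports pointedness from the associated graded back to $U_q^\L$ itself, preserving the coradical.

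Finally, the coradical of the zero part $U_q^{\L,0}\cong\C[\Lambda/2\Lambda_R^{(\ell)}]\otimes U(\mathfrak{h})$ is exactly $\C[\Lambda/2\Lambda_R^{(\ell)}]$, since the abelian enveloping algebra $U(\mathfrak{h})$ has $1$ as its only grouplike. Combining, $U_q^\L(\g,\Lambda)$ is pointed with coradical equal to the image of $\C[\Lambda]$ and group of grouplikes equal to the image of $\Lambda$.
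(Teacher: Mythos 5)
Your proposal is correct in spirit and shares the crucial idea with the paper's proof, namely equipping $U_q^\L$ with the $\N$-filtration/grading that places $E_\alpha^{(r)}$ and $F_\alpha^{(r)}$ in degree $r$ and all of $U_q^{\L,0}$ in degree $0$. Where you part ways from the paper is in the formal step that follows. The paper observes that this is in fact a coalgebra \emph{grading} and then invokes Sweedler's result (Prop.\ 11.1.1 in \cite{Sw69}) that the coradical of a graded coalgebra is contained in its degree-$0$ component; this immediately places $\operatorname{corad}(U_q^\L)\subseteq U_q^{\L,0}$, and the computation $\operatorname{corad}(\C[\Lambda]\otimes U(\mathfrak{h}))=\C[\Lambda]$ finishes the argument. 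Your route instead passes to the associated graded, applies Montgomery's generation-by-grouplikes-and-skew-primitives criterion there, and then appeals to a lifting argument. That detour works, but it is both longer and slightly fragile: the phrase ``transports pointedness back ... preserving the coradical'' is doing a lot of unstated work, since the coradicals of $\gr C$ and $C$ live in different spaces and one really needs the intermediate observation that for any exhaustive coalgebra filtration $\{C_n\}$ one already has $\operatorname{corad}(C)\subseteq C_0$ (Montgomery Lemma 5.3.4) — at which point the passage to $\gr$ and the pointedness criterion become superfluous. In other words, the bound on the coradical you need is a direct consequence of having a coalgebra filtration with $U_q^{\L,0}$ in degree $0$, and there is no need to verify generation by skew-primitives in the associated graded at all. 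One other small point: you are more careful than the paper in writing $\C[\Lambda/2\Lambda_R^{(\ell)}]$ for the zero part; the paper's $\C[\Lambda]$ should indeed be read as the image of $\Lambda$ inside $U_q^\L$, which is $\Lambda/2\Lambda_R^{(\ell)}$.
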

\begin{proof}
  Consider the (very coarse) coalgebra $\N$-grading induced by setting
  $deg(E_\alpha^{(r)})=deg(F_\alpha^{(r)})=r$ and $\deg(x)=0$ for $x\in
  U_q^{\L,0}(\g,\Lambda)$. By \cite{Sw69} Prop. 11.1.1 this already implies
  that the coradical is contained in $U_q^{\L,0}(\g,\Lambda)$. We have shown in 
  Theorem \nref{thm_ZeroPart} that 
  $$U_q^{\L,0}(\g,\Lambda)\cong \C[\Lambda]\otimes U(\mathfrak{h})$$
  Hence the coradical is indeed the group algebra $\C[\Lambda]$. This
  especially shows that there are no other grouplikes than $\Lambda$.
\end{proof}

\subsection{The positive part}

A curious aspect of this article is, that via Lusztig's PBW-basis of root
vectors, we have complete control over the vector space $U_q^{\L,+}$, also in
degenerate cases. The involved question we addressed is the algebra and Hopf
algebra structure. We start in this section by some preliminary observations
in this direction.\\

From the PBW-basis in $U_q^{\Z[q,q^{-1}],\L}$ we have
already determined in Corollary \nref{cor_SpecializationPBW} that multiplication
induces an isomorphism of vector spaces
  $$\C[\Lambda/2\Lambda_R]
  \bigotimes_{i\in I}\left(\bigoplus_{r\geq 0} K_{\alpha_i}^{(r)}\C\right)
  \bigotimes_{\alpha\in \Phi^+}\left(\bigoplus_{r\geq 0}E_{\alpha}^{(r)}\C\right)
  \bigotimes_{-\alpha\in \Phi^-}\left(\bigoplus_{r\geq 0} F_{\alpha}^{(r)}\C\right)
  \stackrel{\cong}{\longrightarrow} U_q^{\L}(\g,\Lambda)$$
The aim of the next theorem is to use the knowledge of the zero-part in the
previous section and a straight-forward-calculation to incorporate at least
part of the algebra relations that hold specifically in the specialization,
without any restrictions on $\ell$:

\begin{lemma}\label{lm_SpecializationPBW}
  Let $q$ be a primitive $\ell$-th root of unity. Then multiplication
  in
  $U_q^\L(\g,\Lambda)$ induces an isomorphism of vector spaces, which
  restricts on each tensor factor to an injection of algebras: 
  $$\C[\Lambda/2\Lambda_R^{(\ell)}]\otimes U(\mathfrak{h})  
  \bigotimes_{\alpha\in\Phi^+}
  \left(\C[E_{\alpha}]/(E_\alpha^{\ell_\alpha})
  \otimes \C[E_{\alpha}^{(\ell_\alpha)}]\right)
  \bigotimes_{-\alpha\in\Phi^-}
  \left(\C[F_{\alpha}]/(F_\alpha^{\ell_\alpha})
  \otimes \C[F_{\alpha}^{(\ell_\alpha)}]\right)
  \stackrel{\cong}{\longrightarrow} U_q^{\L}(\g,\Lambda)$$
\end{lemma}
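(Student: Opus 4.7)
The plan is to start from the PBW isomorphism of Corollary \nref{cor_SpecializationPBW} and refine each of its three groups of tensor factors into the shape claimed. The zero-part factor $\C[\Lambda/2\Lambda_R]\otimes\bigotimes_{i\in I}\bigoplus_{r\geq 0}K_{\alpha_i}^{(r)}\C$ is rewritten as $\C[\Lambda/2\Lambda_R^{(\ell)}]\otimes U(\mathfrak{h})$ by Theorem \nref{thm_ZeroPart}. It therefore remains to exhibit, for each positive root $\alpha$, an injection of algebras $\C[E_\alpha]/(E_\alpha^{\ell_\alpha})\otimes \C[E_\alpha^{(\ell_\alpha)}]\hookrightarrow U_q^\L$ whose image equals $\bigoplus_{r\geq 0}E_\alpha^{(r)}\C$ as a vector space, and analogously for the factor attached to each $F_\alpha$.

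Fix such an $\alpha$. The $\Z[q,q^{-1}]$-subalgebra $\bigoplus_{r\geq 0}E_\alpha^{(r)}\Z[q,q^{-1}]$ of the Lusztig integral form is commutative, being the divided power algebra in a single generator, so after specialization $E_\alpha$ and $E_\alpha^{(\ell_\alpha)}$ commute in $U_q^\L$. Moreover the integral identity $E_\alpha^{\ell_\alpha}=[\ell_\alpha]_{q_\alpha}!\,E_\alpha^{(\ell_\alpha)}$ specializes to $E_\alpha^{\ell_\alpha}=0$, since $[\ell_\alpha]_{q_\alpha}=0$ by the very definition $\ell_\alpha=\ord(q_\alpha^2)$. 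These two facts make the natural map $\C[E_\alpha]/(E_\alpha^{\ell_\alpha})\otimes \C[E_\alpha^{(\ell_\alpha)}]\to U_q^\L$, sending $E_\alpha^m\otimes(E_\alpha^{(\ell_\alpha)})^n$ to $E_\alpha^m(E_\alpha^{(\ell_\alpha)})^n$, well defined as an algebra homomorphism.

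The core step is the reverse direction: writing $r=m+n\ell_\alpha$ with $0\leq m<\ell_\alpha$, I rewrite each PBW basis vector $E_\alpha^{(r)}$ as a nonzero scalar multiple of $E_\alpha^m(E_\alpha^{(\ell_\alpha)})^n$. Combining the integral-form identities $E_\alpha^{(a)}E_\alpha^{(b)}=\binom{a+b}{a}_{q_\alpha}E_\alpha^{(a+b)}$ and $E_\alpha^m=[m]_{q_\alpha}!\,E_\alpha^{(m)}$ with the quantum Lucas theorem in base $\ell_\alpha$ (which at the root of unity yields $\binom{m+n\ell_\alpha}{m}_{q_\alpha}=1$ and $\binom{k\ell_\alpha}{\ell_\alpha}_{q_\alpha}=k$, whence $(E_\alpha^{(\ell_\alpha)})^n=n!\,E_\alpha^{(n\ell_\alpha)}$) gives
$$E_\alpha^{(m+n\ell_\alpha)}=\tfrac{1}{n!\,[m]_{q_\alpha}!}\,E_\alpha^m(E_\alpha^{(\ell_\alpha)})^n,$$
with nonzero prefactor, because $[m]_{q_\alpha}!\neq 0$ for $m<\ell_\alpha$ and we are in characteristic zero. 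Hence the set $\{E_\alpha^m(E_\alpha^{(\ell_\alpha)})^n:0\leq m<\ell_\alpha,\,n\geq 0\}$ is sent to a family of nonzero scalar multiples of distinct PBW basis vectors, which simultaneously establishes injectivity of the factor map and the equality of its image with $\bigoplus_{r\geq 0}E_\alpha^{(r)}\C$. The identical argument applies to each $F_\alpha$, and combining the factor-wise bijections via the ambient PBW isomorphism yields the lemma.

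The step I expect to carry the most content is the specialization identity $(E_\alpha^{(\ell_\alpha)})^n=n!\,E_\alpha^{(n\ell_\alpha)}$, which requires controlling a telescoping product of quantum binomial coefficients at the root of unity; the quantum Lucas theorem in base $\ell_\alpha$ reduces it to a one-line calculation, but a direct induction in the spirit of the reduction performed in the proof of Theorem \nref{thm_ZeroPart} is equally feasible. Everything else is a routine unpacking of the PBW basis into cyclic and polynomial pieces.
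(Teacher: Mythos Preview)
Your proof is correct and follows the same strategy as the paper: invoke Corollary \nref{cor_SpecializationPBW}, replace the zero-part by $\C[\Lambda/2\Lambda_R^{(\ell)}]\otimes U(\mathfrak{h})$ via Theorem \nref{thm_ZeroPart}, and then identify each single-root factor $\bigoplus_{r\geq 0}E_\alpha^{(r)}\C$ with $\C[E_\alpha]/(E_\alpha^{\ell_\alpha})\otimes\C[E_\alpha^{(\ell_\alpha)}]$. The paper dismisses this last step as ``clearly isomorphic''; you actually write down the change-of-basis matrix, which is a welcome addition.

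One small correction: in Lusztig's symmetric normalization the quantum Lucas identity picks up a sign, namely $\binom{m+n\ell_\alpha}{m}_{q_\alpha}=(q_\alpha^{\ell_\alpha})^{mn}$ rather than $1$ (e.g.\ $\binom{3}{1}_{i}=[3]_i=-1$ for $\ell_\alpha=2$). Your formula should therefore read
$$E_\alpha^{(m+n\ell_\alpha)}=\frac{(q_\alpha^{\ell_\alpha})^{-mn}}{n!\,[m]_{q_\alpha}!}\,E_\alpha^m\bigl(E_\alpha^{(\ell_\alpha)}\bigr)^n.$$
Since $q_\alpha^{\ell_\alpha}=\pm 1$ this does not affect your conclusion that the prefactor is nonzero, so the bijectivity argument goes through unchanged.
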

\begin{proof}
  By Corollary \nref{cor_SpecializationPBW}, multiplication in
  $U_q^\L$ induces an isomorphism of vector spaces
  $$\C[\Lambda/2\Lambda_R]
  \bigotimes_{i\in I}\left(\bigoplus_{r\geq 0} K_{\alpha_i}^{(r)}\C\right)
  \bigotimes_{\alpha\in \Phi^+}\left(\bigoplus_{r\geq 0}E_{\alpha}^{(r)}\C\right)
  \bigotimes_{-\alpha\in \Phi^-}\left(\bigoplus_{r\geq 0} F_{\alpha}^{(r)}\C\right)
  \stackrel{\cong}{\longrightarrow} U_q^{\L}(\g,\Lambda)$$
  We clarified in Theorem \nref{thm_ZeroPart} the zero-part $U_q^{\L,0}$, so we
  get an isomorphism 
  $$\C[\Lambda/2\Lambda^{(\ell)}]\otimes
  U(\mathfrak{h})\bigotimes_{\alpha\in \Phi^+}\left(\bigoplus_{r\geq 0}E_{\alpha}^{(r)}\C\right)
  \bigotimes_{-\alpha\in \Phi^-}\left(\bigoplus_{r\geq 0}F_{\alpha}^{(r)}\C\right)
  \stackrel{\cong}{\longrightarrow} U_q^{\L}(\g,\Lambda)$$
 
  We next turn our attention to the algebra generated for a fixed root
  $\alpha\in\Phi^+$ by all elements
  $E_\alpha^{(r)}=E^r/[r]_{q_\alpha}$ in the
  specialization to a primitive $\ell$-th root of unity (respectively for $F$).
  Since
  $[r]_{q_\alpha}=0$ iff $\ell_\alpha:=ord(q_\alpha^2)|r$, it is clearly
  isomorphic to
  $$\bigoplus_{r\geq 0} E_\alpha^{(r)}\C
  \cong \begin{cases}
    \C[E_\alpha]/(E_\alpha^{\ell_i})\otimes\C[E_\alpha^{(\ell_\alpha)}]
      &\ell_\alpha>1\\
    \C[E_\alpha] & \ell_\alpha=1
  \end{cases}$$
  This yields the asserted isomorphism.
\end{proof}

For later use we observe:
\begin{lemma}\label{lm_commute}
  Assume for some $\alpha,\beta\in\Phi^+$ holds $E_\alpha
  E_\beta=q^{(\alpha,\beta)}E_\beta E_\alpha$ already in $U_q^{\Q(q)}$, then
  $E_\alpha^{(k)} E_\beta^{(k')}=q^{(\alpha,\beta)kk'}E_\beta^{(k')}
  E_\alpha^{(k)}$. Assume $\g$ of rank $2$ and $\alpha=\alpha_i$ simple and
  $\alpha+\beta\not\in\Phi^+$ then the assumption holds except for the
  following cases:
  \begin{align*}
    B_2:\;& (\alpha_{112},\alpha_2)\\
    G_2:\;& (\alpha_{11122},\alpha_2),(\alpha_{112},\alpha_2),
    (\alpha_1,\alpha_{11122} )
  \end{align*}
  Note that $(\alpha_1,\alpha_{11122})$ is the reflection of
  $(\alpha_{112}, \alpha_2)$ on $\alpha_{12}$.
\end{lemma}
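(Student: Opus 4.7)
The plan is to split the argument into the two assertions. For the first, assuming $E_\alpha E_\beta = q^{(\alpha,\beta)} E_\beta E_\alpha$ in $U_q^{\Q(q)}$, a straightforward induction on $k$ shows $E_\alpha^k E_\beta = q^{k(\alpha,\beta)} E_\beta E_\alpha^k$, and a further induction on $k'$ yields $E_\alpha^k E_\beta^{k'} = q^{kk'(\alpha,\beta)} E_\beta^{k'} E_\alpha^k$. Dividing both sides by the scalar $[k]_{q_\alpha}!\,[k']_{q_\beta}!$ in $\Q(q)$ produces the asserted identity for divided powers. Since the divided powers lie in the integral form, the identity specialises to $U_q^\L$ at every root of unity.

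For the second assertion I would proceed by a case-by-case analysis of the rank-$2$ root systems $A_1\times A_1$, $A_2$, $B_2$, and $G_2$. For each non-simple positive root I compute $E_\beta = T_{i_1}\cdots T_{i_{k-1}}E_{\alpha_{i_k}}$ as an explicit $q$-bracketed word in the simple root vectors, and then evaluate the commutator $E_\alpha E_\beta - q^{(\alpha,\beta)} E_\beta E_\alpha$ using only the quantum Serre relations and the known action of the $T_i$. In $A_1\times A_1$ all pairs commute trivially; in $A_2$ the only pairs to check are $(\alpha_i,\alpha_{12})$, where the Serre relation directly gives the expected $q$-commutation. In $B_2$ the non-simple roots are $\alpha_{12}$ and $\alpha_{112}$, and a direct calculation shows that the only pair whose commutator has a non-vanishing lower-order correction is $(\alpha_{112},\alpha_2)$, matching the one listed exception. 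In $G_2$ the non-simple roots $\alpha_{12},\alpha_{112},\alpha_{1112},\alpha_{11122}$ must each be checked against the two simple ones; the symmetry noted after the lemma, namely that $(\alpha_1,\alpha_{11122})$ is the $s_{\alpha_{12}}$-reflection of $(\alpha_{112},\alpha_2)$, allows me to derive the corresponding commutator by transport along the appropriate $T_i$ rather than by a fresh computation.

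The main obstacle is the $G_2$ bookkeeping: the iterated braid operators produce long expressions for $E_{\alpha_{1112}}$ and $E_{\alpha_{11122}}$, and the commutators in question pick up explicit quadratic corrections in lower root vectors whose $q$-powers must be tracked carefully to conclude that they do \emph{not} vanish and hence really are exceptions. Once one such commutator is computed in closed form (say for $(\alpha_{112},\alpha_2)$), the Weyl-groupoid symmetry and the analogous $B_2$ computation fill in the remaining cases; all other pairs $(\beta_1,\beta_2)$ with one entry simple and $\beta_1+\beta_2\notin\Phi^+$ are then confirmed to $q$-commute, establishing the claim.
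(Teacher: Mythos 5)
Your proposal is correct and takes essentially the same approach as the paper. The first part (prove the power identity by induction, then divide by the quantum-factorial scalars, noting the result lives in the integral form and hence specializes) is exactly the paper's ``trivially checked in $U_q^{\Q(q)}$, where divided powers can be written as powers.'' For the second part, the paper simply cites Lusztig's explicit rank-$2$ commutation formulae (\cite{Lusz90b} Sec.~5) and reads off the exceptions; you propose to re-derive those formulae by writing each non-simple $E_\beta$ via the $T_i$ and computing commutators from the Serre relations, and to use the $s_{\alpha_{12}}$-reflection to transport $(\alpha_{112},\alpha_2)$ to $(\alpha_1,\alpha_{11122})$ in $G_2$. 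That is the same method carried out from first principles rather than by citation; it would work, but be aware that the Serre relations alone do not immediately yield the $q$-commutation of PBW root vectors --- you would in effect be reconstructing the Levendorskii--Soibelman/Lusztig relations, which is precisely the content of the cited section.
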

\begin{proof}
 The first assertion is trivially checked in $U_q^{\Q(q)}$, where divided
  powers can be written as powers. The second assertion follows by inspecting
  \cite{Lusz90b} Sec. 5.2. Note that the other exceptional cases with $\alpha$
  not simple would follow easily by Weyl reflection. Precisely these exceptions
  will generate the dual root system in Lemma \nref{lm_gell}.
\end{proof}

\section{The small quantum group for arbitrary
\texorpdfstring{$q$}{q}}\label{sec_smalluq}

Lusztig has in \cite{Lusz90b} Thm 8.10. discovered a remarkable homomorphism 
from $U_q^\L(\g)$ to the ordinary universal enveloping Hopf algebra $U(\g)$
$$U_q^\L(\g,\Lambda)\xrightarrow{Frob}U(\g)$$
whenever $\ell$ is odd and for $\g=G_2$ not divisible by $3$. He described
the kernel in terms of an even more remarkable finite-dimensional
Hopf algebra $u_q(\g)$. This Hopf algebra has under the name
\emph{Frobenius-Lusztig kernel} triggered the development of the
theory and several far-reaching classification results on
finite-dimensional pointed Hopf algebras and Nichols algebras in the past
$\sim$20 years.\\

Lusztig's definition and PBW-basis for $u_q^\L(\g)$ is without restrictions on
the order of $q$. It does however not describe the structure of $u_q^\L(\g)$ as
an algebra. After reviewing Lusztig's definition and theorem, we will
describe the algebra in terms of Nichols algebras and clarify its structure.
Especially $u_q^\L(\g)$ does for small order of $q$ {\bf not} coincide with the
usual description in terms of generators and relations, which we will denote
$u_q(\g)$ -- in the \emph{exotic case} $\g=G_2,q=\pm i$ it will be even of
larger rank, namely $A_3$. 

\begin{definition}[\cite{Lusz90b} Sec. 8.2]\label{def_uqL}
    Let $u_q^\L(\g,\Lambda)\subset U_q^\L(\g,\Lambda)$ be the subalgebra
    generated by $\Lambda$ and all $E_\alpha,F_\alpha$ with $\alpha\in\Phi^+$
    such that $\ell_\alpha>1$. 
\end{definition}
Note this implicit definition does not give the algebra structure, but the
vector space is well understood using Lusztig reflection operators:
\begin{theorem}[\cite{Lusz90b} Thm. 8.3]\label{thm_Lusztiguq}
    $u_q^\L(\g,\Lambda)$ is a Hopf subalgebra and multiplication
    in $u_q^\L$ defines an isomorphism of vector spaces:
      $$\C[\Lambda/2\Lambda_R^{(\ell)}]  
      \bigotimes_{\alpha\in\Phi^+,\ell_\alpha>1}
	\C[E_{\alpha}]/(E_\alpha^{\ell_\alpha})
      \bigotimes_{-\alpha\in\Phi^-,\ell_\alpha>1}
	\C[F_{\alpha}]/(F_\alpha^{(\ell_\alpha)})
      \stackrel{\cong}{\longrightarrow} u_q^{\L}$$
    Especially $u_q^\L$ is of finite dimension $|\Lambda|\cdot
    \prod_{\alpha\in \Phi^+}\ell_\alpha^2$
\end{theorem}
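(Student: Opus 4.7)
The plan is to deduce the theorem directly from the full PBW decomposition of $U_q^\L(\g,\Lambda)$ obtained in Lemma \nref{lm_SpecializationPBW}, by comparing $u_q^\L$ with the sub-tensor-product of those PBW factors that do \emph{not} involve divided powers. Concretely, set
$$V \;:=\;\C[\Lambda/2\Lambda_R^{(\ell)}]\!\!\bigotimes_{\alpha\in\Phi^+,\,\ell_\alpha>1}\!\!\C[E_\alpha]/(E_\alpha^{\ell_\alpha})\!\!\bigotimes_{-\alpha\in\Phi^-,\,\ell_\alpha>1}\!\!\C[F_\alpha]/(F_\alpha^{\ell_\alpha}),$$
viewed as a subspace of $U_q^\L(\g,\Lambda)$ via multiplication. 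Because the ordered monomials appearing in $V$ are a sub-family of the PBW monomials of Lemma \nref{lm_SpecializationPBW}, multiplication is injective on $V$, and the dimension count is immediate: $[\Lambda:2\Lambda_R^{(\ell)}]\cdot\prod_\alpha \ell_\alpha^2$ (with the trivial factor $\ell_\alpha=1$ contributing $1$).

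The inclusion $V\subseteq u_q^\L$ is clear, since every ordered PBW monomial in $V$ is a product of generators explicitly listed in Definition \nref{def_uqL}. For the reverse inclusion $u_q^\L\subseteq V$, I would show $V$ is closed under left multiplication by each generator of $u_q^\L$. Multiplication by $K_\lambda$ only rescales $E$'s and $F$'s and shifts the group component, so $V$ is preserved. Multiplication by $F_\beta$ (with $\ell_\beta>1$) on a PBW monomial first passes $F_\beta$ through the $E$-factors using the linking relation $[E_\alpha,F_\beta]=\delta_{\alpha,\beta}\frac{K_\alpha-K_\alpha^{-1}}{q_\alpha-q_\alpha^{-1}}$, then inserts it into the ordered $F$-product via the Levendorskii--Soibelman-type commutator. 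The analogous analysis handles multiplication by $E_\beta$. The critical claim at each step is: whenever the commutator expansion formally involves some root vector $E_\gamma$ (resp.\ $F_\gamma$) with $\ell_\gamma=1$, the coefficient at the root-of-unity specialization vanishes, exactly as the factor that forces $E_\beta^{\ell_\beta}=0$. Higher powers $E_\beta^r,F_\beta^r$ with $r\ge\ell_\beta$ vanish likewise. This guarantees that the result of multiplying lies in $V$, and hence $u_q^\L = V$.

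For the Hopf subalgebra property, the coproduct on simple generators gives $\Delta(E_{\alpha_i})=E_{\alpha_i}\otimes K_{\alpha_i}+1\otimes E_{\alpha_i}$, manifestly in $u_q^\L\otimes u_q^\L$ whenever $\ell_{\alpha_i}>1$. For a non-simple $E_\alpha=T_{i_1}\cdots T_{i_{k-1}}E_{\alpha_{i_k}}$, the compatibility of Lusztig's braid operators with the coproduct combined with the Levendorskii--Soibelman expansion yields $\Delta(E_\alpha)=E_\alpha\otimes K_\alpha+1\otimes E_\alpha+\sum c_{\beta,\gamma}\,E_\beta\otimes E_\gamma\cdot K_{(\cdot)}$ summed over $\beta+\gamma=\alpha$ with $\beta,\gamma$ in the convex hull of the two chosen positions; since $\ell_\bullet$ is a Weyl-group invariant of the root, the same vanishing of coefficients at roots of unity keeps all contributions of roots with $\ell_\gamma=1$ out of the sum.

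The main obstacle is the verification of the vanishing of ``bad'' coefficients in the commutator/coproduct expansions whenever a root with $\ell_\gamma=1$ would otherwise intervene. Rather than attempt a type-by-type computation, the cleanest route is to reduce to rank $2$: Theorem \nref{thm_pairs} (proven later) allows any pair of roots to be simultaneously Weyl-conjugated into a rank-$2$ parabolic subsystem, and in the rank-$2$ cases $A_1\times A_1$, $A_2$, $B_2$, $G_2$ the relevant commutator formulas are tabulated in \cite{Lusz90b} Sec.~5, from which the required vanishing can be read off directly. The only subtlety is the exotic $G_2,\ell=4$ case, where the Nichols algebra degenerates as noted in Subsection \nref{ssec_NicholsAlgebras}; this will be handled separately in Section \nref{sec_smalluq} and justifies the implicit definition given here.
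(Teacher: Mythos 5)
Your proposal takes a genuinely different route than the paper: whereas the paper simply cites Lusztig (\cite{Lusz90b} Thm.\ 8.3 iii) for the PBW-basis of $u_q^\L$ in the case $\Lambda=\Lambda_R$ and then transports it to arbitrary $\Lambda$ via the $\C[\Lambda]\otimes_{\C[\Lambda_R]}$ construction of Theorem \nref{thm_IntegralForm}, you attempt to re-derive Lusztig's result from scratch by exhibiting $V$ as a subalgebra. The logical skeleton (inclusion $V\subseteq u_q^\L$ from the PBW basis; closure of $V$ under left multiplication by generators to get $u_q^\L\subseteq V$) is sound as a strategy.

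However, there is a genuine gap at the crux of your argument: the claim that \emph{every} ``bad'' term in the Levendorskii--Soibelman expansions --- i.e.\ any term involving a root vector $E_\gamma$ with $\ell_\gamma=1$, or a power $E_\beta^r$ with $r\ge\ell_\beta$ --- carries a coefficient that vanishes at the root of unity is asserted, not proved. You write that these coefficients vanish ``exactly as the factor that forces $E_\beta^{\ell_\beta}=0$'', but this is not how Lusztig's commutation coefficients look in general: they are products of $q$-binomials and quantities like $\prod_{i=1}^s(q^{2i}+1)$ (compare the $B_2$ formulas used in the proofs of Lemmas \nref{lm_normal} and \nref{lm_gell}), and their vanishing is a nontrivial, case-by-case phenomenon, not an immediate corollary of the nilpotency $E_\beta^{\ell_\beta}=0$. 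You explicitly name this the ``main obstacle'' and propose reducing to rank $2$ via Theorem \nref{thm_pairs}, which is a reasonable tactic --- the paper uses exactly that reduction for the normality and quotient computations --- but you stop at ``the required vanishing can be read off directly,'' which is precisely where the actual work lies (and for $G_2$ it is quite involved). The same gap recurs for the Hopf subalgebra property: compatibility of $T_i$ with $\Delta$ plus the L--S expansion gives a \emph{formal} coproduct formula, but the vanishing of the cross-terms involving $\ell_\gamma=1$ roots again requires verification. In short: your plan is a valid alternative route, but as written it re-opens Lusztig's theorem without closing it, whereas the paper's proof only requires the (easy) observation that $\{E_\alpha^{(r)}: r<\ell_\alpha\}$ spans $\C[E_\alpha]/(E_\alpha^{\ell_\alpha})$ plus the coradical-extension machinery already set up in Section \nref{sec_ExtensionOfScalars}.
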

\begin{proof}
    In \cite{Lusz90b} Thm 8.3 iii) Lusztig proved for $\Lambda=\Lambda_R$
    and without restrictions on $\ell$ that $u_q^\L$ has a PBW-basis consisting
    of $\Lambda_R$ and all $E_\alpha^{(r)},F_\alpha^{(r)}$ with $r<\ell_\alpha$.
    Note that this set is empty for $\ell_\alpha=1$. We've proven as part of
    Lemma \nref{lm_SpecializationPBW} the simple fact that $\C[E_\alpha]$
    consists precisely of all $E_\alpha^{(r)}$ with $r<\ell_\alpha$. This shows
    the claim for $\Lambda=\Lambda_R$. The case of arbitrary $\Lambda$
    via $\C[\Lambda]\otimes_{\C[\Lambda_R]}$ follows again from the
    presentation in Theorem \nref{thm_IntegralForm}.
\end{proof}

\begin{definition}
  Assume that $\ord(q^2)>d_\alpha$ for all $\alpha\in\Phi$. Let $V$ be the vector space with basis $E_{\alpha_1},\ldots,E_{\alpha_n}$ with diagonal braiding $\chi(\alpha_i,\alpha_j)=q^{(\alpha_i,\alpha_j)}$. Then we denote the Nichols algebra  $\B(V)$ by $u_q(\g)^+$ and the respective double $U(\chi)$ in \cite{Heck10} by $u_q(\g)$.
\end{definition}  
These are the small quantum groups associated to $\g$ as they are usually considered, with PBW generators given in terms of roots of $\g$. We remark that for large order $\ord(q^2)$ the usual  quantum Serre relations and truncation relations are defining relations. A set of defining relations for $u_q(\g)^+$ for general $q$ is given in \cite{An13}.\\

The following theorem compares $u_q^\L,u_q$. For sufficiently large order of $q$ the two definitions coincide (which seems to be well-known, but we prove
it nevertheless), but in other cases the result gives an explicit description in terms of a different $u_q$ and thereby determine its root system:

\begin{theorem}\label{thm_smalluq}
    For $\ord(q^2)>d_\alpha$ for all $\alpha\in \Phi^+$ we have
  $u_q^\L(\g,\Lambda)\cong  u_q(\g,\Lambda)$. If some 
  $\ord(q^2)\leq d_\alpha$  we can
  express $u_q^\L(\g,\Lambda)^+$ in terms of some ordinary
  $u_q(\g^{(0)},\Lambda)^+$ as
  follows:
  \begin{center}
  \begin{tabular}{ll|llll}
    $q\qquad$ & $\g\qquad$ & $u_q^\L(\g,\Lambda)^+\qquad$ & $\dim$ & primitive
    generators & Comment\\
    \hline
    $\pm 1$ & all & $\C$ & $1$ & none & trivial \\
    $\pm i$ & $B_n$ & $u_q(A_1^{\times n})^+$ & $2^n$ & $E_{\alpha_n},
      E_{\alpha_n+\alpha_{n-1}}, E_{\alpha_n+\alpha_{n-1}+\alpha_{n-2}}, \ldots$
      & short roots \\
    $\pm i$ & $C_n$ & $u_q(D_n)^+$ & $2^{n(n-1)}$ & $E_{\alpha_1},\ldots
      E_{\alpha_{n-1}},E_{\alpha_{n}+\alpha_{n-1}}$ & short roots \\
    $\pm i$ & $F_4$ & $u_q(D_4)^+$ & $2^{12}$ & $E_{\alpha_4},E_{\alpha_3},
    E_{\alpha_3+\alpha_2},E_{\alpha_3+\alpha_2+\alpha_1}$
      & short roots \\
    $\sqrt[3]{1},\sqrt[6]{1}$ & $G_2$ & $u_q(A_2)^+$ & $3^3$ & 
      $E_{\alpha_1},E_{\alpha_1+\alpha_2}$
      & short roots \\
    $\pm i$ & $G_2$ & $u_{\bar{q}}(A_3)^+$ & $2^6$ &
      $E_{\alpha_2},E_{\alpha_1},E_{2\alpha_1+\alpha_2}$  &
      exotic \\
  \end{tabular}
  \end{center}
\end{theorem}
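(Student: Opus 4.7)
The plan is to realise $u_q^{\L,+}$ as the positive part of a quantum group attached to some Lie algebra $\g^{(0)}$, i.e.\ as a Nichols algebra over $\C[\Lambda/2\Lambda_R^{(\ell)}]$, and to identify $\g^{(0)}$ from the braiding matrix of its primitive generators via Heckenberger's classification. Concretely, Theorem \nref{thm_Lusztiguq} gives an explicit PBW basis of $u_q^{\L,+}$ in the root vectors $E_\alpha$ with $\ell_\alpha > 1$, hence a natural $\N I$-grading; the degree-one primitive part — consisting of the simple $E_{\alpha_i}$ with $\ell_{\alpha_i} > 1$, together with any higher root vectors $E_\beta$ that become primitive once the simple vectors with $\ell_{\alpha_i}=1$ are absent — carries the braiding $(\beta,\beta')\mapsto q^{(\beta,\beta')}$. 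Reading off this braided vector space and applying Heckenberger's list determines $\g^{(0)}$.

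\textbf{Generic case.} When $\ord(q^2) > d_\alpha$ for every $\alpha$, every simple $E_{\alpha_i}$ lies in $u_q^{\L,+}$. Using Lusztig's reflection operators $T_i$ along a reduced expression of the longest Weyl element, each higher $E_\beta$ is realised as a nested braided commutator of the simple $E_{\alpha_i}$, so these generate all of $u_q^{\L,+}$; the quantum Serre relations of $U_q^\L$ descend, and the nilpotency relations $E_\alpha^{\ell_\alpha}=0$ hold by Theorem \nref{thm_Lusztiguq}. Thus there is a surjection $u_q(\g,\Lambda)^+ \twoheadrightarrow u_q^{\L,+}$. The target has dimension $\prod_{\alpha\in\Phi^+} \ell_\alpha$ by Theorem \nref{thm_Lusztiguq}, and the source has the same dimension because its Borel part is the Nichols algebra $\B(M)$ of Cartan type $\g$ with braiding $q^{(\alpha_i,\alpha_j)}$ and the dimension formula recalled in Section \nref{ssec_NicholsAlgebras} applies (all $q_{ii}\neq 1$). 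Hence the surjection is an isomorphism. For the non-exotic degenerate rows of the table one proceeds analogously: those $E_{\alpha_i}$ with $\ell_{\alpha_i}=1$ are gone, and I would (a) verify that the listed $E_\beta$ are primitive in $u_q^{\L,+}$ by computing $\Delta(E_\beta)$ in $U_q^\L$ and observing that the would-be non-primitive terms all contain a vanishing $E_{\alpha_i}$, (b) compute the braiding matrix $(q^{(\beta,\beta')})$ and match it against Heckenberger's list to identify $\g^{(0)}$, (c) check the relevant Serre and nilpotency relations, and (d) compare total dimensions against the PBW basis restricted to $\{\alpha : \ell_\alpha > 1\}$. For instance, in $\g=B_n, q=\pm i$ only the $n$ short roots survive; the listed generators $E_{\alpha_n}, E_{\alpha_n+\alpha_{n-1}},\dots$ are pairwise orthogonal with self-braiding $-1$, giving $A_1^{\times n}$ of dimension $2^n$.

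\textbf{Exotic case and main obstacle.} The principal hurdle is $\g=G_2, q=\pm i$. Here all six positive roots still satisfy $\ell_\alpha = 2$, so $\dim u_q^{\L,+} = 2^6$, yet the naive Cartan-type Nichols algebra of $G_2$ at $\ell=4$ has strictly smaller dimension — precisely the exceptional case excluded from Heckenberger's list, where the braiding matrix accidentally coincides with that of $A_2$, as highlighted in Section \nref{ssec_NicholsAlgebras}. To resolve this I would check by explicit computation in $U_q^\L(G_2)$ that $E_{\alpha_2}, E_{\alpha_1}, E_{2\alpha_1+\alpha_2}$ are primitive in $u_q^{\L,+}$, and that the $3\times 3$ matrix of inner products yields exactly the braiding of $A_3$ at $\bar q = \mp i$ (the diagonal entries become $-1$, the $(\alpha_2,\alpha_1)$- and $(\alpha_1,2\alpha_1+\alpha_2)$-braidings become adjacent edges, and $(\alpha_2,2\alpha_1+\alpha_2)$ is orthogonal), so that Heckenberger's classification identifies $u_q^{\L,+} \cong u_{\bar q}(A_3)^+$ with matching dimension $2^6$. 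Verifying primitivity of the non-simple generator $E_{2\alpha_1+\alpha_2}$ together with the $A_3$-Serre relations for these three elements inside $U_q^\L(G_2)$ is the technically delicate point and the principal obstacle of the theorem; throughout, the reduction-to-rank-2 trick of Theorem \nref{thm_pairs} keeps the degenerate-case computations tractable by reducing commutator identities to a small number of rank-2 subsystems.
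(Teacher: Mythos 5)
Your overall shape is close to the paper's — identify a set $X$ of primitive root vectors, read off the braiding matrix $q^{(\beta,\beta')}$, match it against Heckenberger's list, and compare against Lusztig's PBW dimension of $u_q^{\L,+}$ — but the argument you run points the Nichols-algebra surjection the \emph{wrong way}, which leaves a real gap in the degenerate rows. You propose to construct a map $u_q(\g^{(0)},\Lambda)^+\cong\B(V)\twoheadrightarrow H$, where $H\subseteq u_q^{\L,+}$ is generated by the listed primitives, by verifying the Serre and nilpotency relations inside $u_q^{\L,+}$. From this and $\dim\B(V)=\dim u_q^{\L,+}$ you only get $\dim H\leq\min(\dim\B(V),\dim u_q^{\L,+})$, which does \emph{not} force $H=u_q^{\L,+}$. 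You patch this in the generic case by arguing that all root vectors are nested braided commutators of the simple $E_{\alpha_i}$, but ``proceed analogously'' is not available in the degenerate rows: there the simple root vectors with $\ell_{\alpha_i}=1$ have vanished, and $u_q^{\L,+}$ is by Definition \nref{def_uqL} generated by \emph{all} $E_\alpha$ with $\ell_\alpha>1$ — a set strictly larger than $X$ (e.g.\ $C_n,q=\pm i$ has $n(n-1)$ such roots but $|X|=n$). One would need a separate generation argument there, which you do not supply.

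The paper avoids this by invoking the universal property in the opposite direction: since $H$ is a connected graded braided Hopf algebra generated in degree one by the braided vector space $V$, and the Nichols ideal $\mathfrak{I}\subset T(V)$ is the \emph{largest} homogeneous Hopf ideal in degrees $\geq 2$, one gets a surjection $H\twoheadrightarrow\B(V)$ for free, with no relations to check. Combined with $H\subseteq u_q^{\L,+}$ and $\dim\B(V)=\dim u_q^{\L,+}$, this sandwich immediately gives $\B(V)=H=u_q^{\L,+}$, and in particular \emph{proves} that the primitives in $X$ generate, rather than assuming it. This direction also spares you the extra work of checking Serre and nilpotency relations (which in the $G_2,q=\pm i$ case would be genuinely painful). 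Finally, for the exotic case you correctly identify primitivity of $E_{2\alpha_1+\alpha_2}$ and the $A_3$-braiding at $\bar q$ as the delicate points, but you should also note (as the paper does) that it is not a priori clear that $u_q^{\L,+}$ is $\N$-graded with $E_2,E_1,E_{112}$ in degree one, since their $\N^{I}$-degrees are linearly dependent; this must be checked explicitly against Lusztig's rank-2 commutation formulae before the Nichols-algebra machinery applies.
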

The {\bf exotic case} is special in several instances: It is the only
non-trivial case with 
$\ord(q^2)\lneq d_\alpha$. Since all $\ell_\alpha=2$  all root vectors are
contained in $u_q^{\L,+}$ and the rank even increases because an additional
``premature'' relation $\ad_{E_{\alpha_1}}^2(E_{\alpha_2})=0$ requires a new
algebra generator $E_{2\alpha_1+\alpha_2}$, yielding an $A_3$-root system (which
has also $6$ positive roots). Moreover, the braiding matrix of $u_q^{\L,+}$ is
not the braiding matrix of $u_q(A_3)^+$, rather of the complex conjugate root
of unity $u_{\bar{q}}(A_3)^+$, which is the other choice of a primitive fourth
root of unity. This case will exhibit strange phenomena throughout this article.

The author has compiled in the recent preprint \cite{Len14b} a similar list for affine Lie algebras, where more exotic cases appear, see also Problem \nref{prob_affine}.\\

\begin{proof}[Proof of Theorem \nref{thm_smalluq}] The algebra
$u_q^\L(\g,\Lambda)^+$ is a Hopf algebra in the category of
$\Lambda$-Yetter-Drinfel'd modules. 
The strategy of this proof is the following
\begin{enumerate}[a)]
 \item Prove that for a certain subset $X\subset \Phi^+$ the root vectors
  $E_\alpha,\;\alpha\in X$ are in $u_q^\L(\g,\Lambda)^+$ and consist of
  primitive elements. These elements $X$ are ``guessed'' for each root system at
  this point of the proof and appear in the fifth column of the table.
 \item Determine the braiding matrix of the vector space $V$ spanned by
  $E_\alpha,\;\alpha\in X$ and determine the Nichols algebra $\B(V)$ using
  \cite{Heck09} and especially $\dim(\B(V))$.
 \item Now by the universal property of the Nichols algebra we have a
  surjection from the Hopf subalgebra $H\subset u_q^{\L,+}$
  generated by the $E_\alpha,\;\alpha\in X$ to the Nichols algebra $\B(V)$.
  Note that $u_q^{\L,+}(\g)$ will be a graded algebra for trivial reasons
  except for the exotic case, where we show this explicitly.
  Using Lusztig's PBW-basis of $u_q^{\L,+}$ in the previous theorem we
  show in each case $\dim(u_q^{\L,+})=\dim(\B(V))$ and hence
  $\B(V)=H=u_q^{\L,+}$.
\end{enumerate}
We now proceed to the proof according to the steps outlined above. The trivial
case is if all $\ell_\alpha=1$ i.e. $q^2=1$, the generic case is if all
$\ord(q^{2})>d_\alpha$. For simply-laced $\g$ all
$d_\alpha=1$, so these two cases exhaust all
possibilities. For the non-simply-laced $\g=B_n,C_n,F_4$ we have $d_\alpha=2$
for long roots, hence the condition is also violated for $\ord(q^2)=2$; for the
non-simply-laced $\g=G_2$ we have $d_\alpha=3$ for long roots, hence the
condition is also violated for $\ord(q^2)=2,3$. Thus, we have to check precisely
the cases in the table of the theorem. 
\begin{enumerate}[a)]
 \item We show that the $E_\alpha,\;\alpha\in X$ given in the last column of the
  theorem are primitive. We first show the well-known general fact that if
  $x,y$ are primitive elements with braidings $(x\otimes y)\mapsto
  q_{12}(y\otimes  x)\mapsto q_{12}q_{21}(x\otimes y)$, then $q_{12}q_{21}=1$
  implies the braided commutators $[x,y]:=xy-q_{12}yx$ resp.
  $[y,x]:=yx-q_{21}xy$ are  primitive as well (possibly $=0$):
  \begin{align*}
    \Delta([x,y])
    &=\Delta(xy)-q_{12}\Delta(yx)\\
    &=\left(1\otimes xy+q_{12}y\otimes x+x\otimes y+xy\otimes 1 \right)\\
    &-q_{12}\left(1\otimes yx+q_{21}x\otimes y+y\otimes x+yx\otimes 1\right)\\
    &=1\otimes [x,y]+(1-q_{12}q_{21})(x\otimes y)+[x,y]\otimes 1
  \end{align*}
  Now we check in each case that the $E_\alpha,\;\alpha\in X$ given in the last
  column of the theorem fulfill $\ell_\alpha>1$, hence $E_\alpha\in u_q^{\L,+}$,
  and can be in $U_q^{\Q(q)}$ obtained (inductively) as braided commutators with
  $q_{12}q_{21}=1$, hence they are primitive (the exotic case works different).
  \begin{enumerate}[i)]
    \item In the trivial case $\ell_\alpha=1$ there are no root
    vectors with $\ell_\alpha>1$, hence we chose $X:=\{\}$. In the generic case
    where all
    $\ell_\alpha>d_\alpha$ then all $\ell_\alpha>1$ and all root vectors are in
    $u_q^{\L,+}$, we hence choose $X:=\{\alpha_1,\ldots \alpha_n\}$ (simple
    root vectors are by definition primitive).
    \item For $\g=B_n,q=\pm i$ we have for short roots
    $\ell_\alpha=\ord(q^2)=2$; the
    elements $X:=\{\alpha_n,\alpha_n+\alpha_{n-1},\ldots\}$ are (in fact all)
    short roots. Moreover we can in in $U_q^{\Q(q)}$ inductively obtain   
    $E_{\alpha_n+\alpha_{n-1}+\cdots+\alpha_k}$ by braided commutators of the
    primitive $E_{\alpha_n}\in u_q^{\L,+}$ with the primitive   
    $E_{\alpha_{k\neq n}}\not\in u_q^{\L,+}$. We verify the condition   
    $q_{12}q_{21}=1$ in each step:
    \begin{align*}
     & q^{(\alpha_n+\alpha_{n-1}+\cdots+\alpha_k,\alpha_{k-1})}
     q^{(\alpha_{k-1},\alpha_n+\alpha_{n-1}+\cdots+\alpha_k)}\\
     &=q^{2(\alpha_k,\alpha_{k-1})}=q^{-4}=1 
    \end{align*}
    Note that we have to convince ourselves that the commutator is not
    accidentally zero in the specialization: Reflection easily reduces this   
    case to $B_2$, where we check the commutator explicitly to be $q^2E_{12}\neq
    0$ by \cite{Lusz90b} Sec. 5.2.
    \item For $\g=C_n,q=\pm i$ we have for short roots
    $\ell_\alpha=\ord(q^2)=2$; the
    elements $X:=\{\alpha_1,\alpha_2,\ldots,\alpha_{n-1}+\alpha_n\}$ are    
    short roots. Moreover all $E_{\alpha_{k}}$ are primitive and
    $E_{\alpha_{n-1}+\alpha_n}$ is primitive by applying the case $B_2$ to the
    subsystem generated by $\alpha_{n-1},\alpha_n$.
    \item For $\g=F_4,q=\pm i$ we have for short roots
    $\ell_\alpha=\ord(q^2)=2$; the
    elements
    $X:=\{\alpha_{4},\alpha_3,\alpha_3+\alpha_2,\alpha_{3}+\alpha_2+\alpha_1\}$
    are short roots. Moreover $E_{\alpha_4}$ is primitive and
    $E_{\alpha_3},E_{\alpha_3+\alpha_2},E_{\alpha_3+\alpha_2+\alpha_1}$ are
    primitive by applying the case $B_3$ to the subsystem generated by
    $\alpha_3,\alpha_2,\alpha_1$.
    \item For $\g=G_2,q=\sqrt[3]{1},\sqrt[6]{1}$ we have for short roots
    $\ell_\alpha=\ord(q^2)=3$; the elements
    $X:=\{\alpha_1,\alpha_1+\alpha_2\} $
    are short roots. Moreover $E_{\alpha_1+\alpha_2}$ is primitive since we
    get it in $U_q^{\Q(q)}$ as a braided commutator of
    $E_{\alpha_1},E_{\alpha_2}$   and
    $$q^{(\alpha_1,\alpha_2)}q^{(\alpha_2,\alpha_1)}=q^{-6}=1$$
    We also check the commutator explicitly to be $q^3E_{12}\neq 0$ by
    \cite{Lusz90b} Sec. 5.2.
    \item For $\g=G_2,q=\pm i$ we have for all roots
    $\ell_\alpha=\ord(q^2)=\ord(q^6)=2$. We choose
    $X:=\{\alpha_2,\alpha_1,2\alpha_{1}+\alpha_2\}$. Moreover the elements
    $E_{\alpha_2},E_{\alpha_1}$ are primitive. Checking primitivity of
    $E_{112}$ is more involved. We could
    in principle express $E_{112}$ by definition via reflections in terms of
    $E_1,E_2,F_1,E_1^{(2)},E_2^{(3)}$ but using the relation \cite{Lusz90b}
    Sec. 5.4 (a6) for $k=2$ is more convenient as follows (note that in this
    exotic case there will be no way of expressing $E_{112}$ by
    $E_1,E_2$ as will become clear later in the proof. E.g. Lusztig's relations
    (a3) returns zero for $q=\pm i$):
    $$E_{112}=-q^2(E_2E_1^{(2)}-q^{-6}E_1^{(2)}E_2)-qE_{12}E_1$$
    With this formula we can calculate directly:
    \begin{align*}
     \Delta(E_{12})
      &=\Delta(T_2(E_1))=\Delta(-E_2E_1+q^{-3}E_1E_2)=-\Delta([E_2,E_1])\\
      &=1\otimes E_{12}-(1-q^{-6})E_2\otimes E_1+E_{12}\otimes 1\\
      \Delta(E_{112})
      &=-q^2\left((1\otimes E_2+E_2\otimes 1)
      (1\otimes E_1^{(2)}+qE_1\otimes E_1+E_1^{(2)}\otimes 1)\right.\\
      &\left.-q^{-6}(1\otimes E_1^{(2)}+qE_1\otimes E_1+E_1^{(2)}\otimes 1)
      (1\otimes E_2+E_2\otimes 1)\right)\\
      &-q\left((1\otimes E_{12}-(1-q^{-6})E_2\otimes E_1+E_{12}\otimes 1)
      (1\otimes E_1+E_1\otimes 1)\right)\\
      &=-q^2\left(1\otimes(E_2E_1^{(2)}-q^{-6}E_1^{(2)}E_2)
      +q^{-2}E_1\otimes(E_2E_1-q^{-3}E_1E_2)\right.\\
      &\left.+(1-q^{-12})E_2\otimes E_1^{(2)}   
      +q(E_2E_1-q^{-9}E_1E_2)\otimes E_1
      +(E_2E_1^{(2)}-q^{-6}E_1^{(2)}E_2)\otimes 1\right)\\
      &-q\left(1\otimes E_{12}E_1+q^{-1}E_1\otimes E_{12}
      -(1-q^{-6})E_2\otimes E_1^2\right.\\
      &\left.-q^2(1-q^{-6})E_2E_1\otimes E_1	
      +E_{12}\otimes E_1+E_{12}E_1\otimes E_1\right)\\
      &=1\otimes E_{112}+E_{112}\otimes 1
    \end{align*}
\end{enumerate}
  \item We next determine the braiding matrix and hence from \cite{Heck09} the
  root system and dimension of the Nichols algebra $\B(V)$ of the vector space
  $V$ generated by all $E_{\alpha},\;\alpha\in X$ defined above. This yields
  the information in the third and fourth row of the table. We again proceed
  case-by-case:
  \begin{enumerate}[i)]
    \item For the trivial case we defined $X:=\{\}$ so the braiding matrix on
    $V=0$ is trivial, hence the Nichols algebra is
    $1$-dimensional. For the generic case we defined $X:=\{\alpha_1,\ldots
    \alpha_n\}$ so the braiding matrix
    is $q_{ij}=q^{(\alpha_i,\alpha_j)}=q^{d_ia_{ij}}$. All $\ell_\alpha$
    coincides and the
    Nichols algebra $\B(V)$ is the Nichols algebra associated to the Lie algebra
    $\g$ and hence of dimension $\prod_{\alpha\in \Phi^+}\ell_\alpha^{|\Phi^+|}$
    (note that we
    inspected every row in \cite{Heck09} and provided $\ord(q^2)>d_\alpha$ the
    Nichols algebras in question have indeed the claimed root system and hence
    the claimed dimension).
    \item For $\g=B_n,q=\pm i$ we defined
      $X=\{\alpha_n,\alpha_n+\alpha_{n-1},\ldots\}$. An easy calculation shows
      $$q^{(\alpha_n+\alpha_{n-1}+\cdots \alpha_k,\alpha_n+\alpha_{n-1}+\cdots
      \alpha_l)}=\begin{cases} q^2,& k=l \\ 1,& k\neq l \end{cases}$$
      Hence the Nichols algebra has a root system of type $A_1^{\times n}$ and
      dimension $2^n$.
    \item For $\g=C_n,q=\pm i$ we defined
      $X:=\{\alpha_1,\alpha_2,\ldots,\alpha_{n-1}+\alpha_n\}$. The braiding
      matrix of the first $n-1$ root vectors is that of $A_n\subset C_n$ and for
      the braiding with the last root vector $E_{\alpha_{n-1}+\alpha_n}$ we
      easily obtain 
      $$q^{(\alpha_{n-2},\alpha_{n}+\alpha_{n-1})}=q^{-1}\qquad
      q^{(\alpha_{n-1},\alpha_{n-1}+\alpha_n)}=q^{0}\qquad
      q^{(\alpha_{n-1}+\alpha_n,\alpha_{n-1}+\alpha_n)}=q^{2}$$
      Hence the Nichols algebra has a root system of type $D_n$ (with
      $\alpha_{n-2}$ the center node) and dimension $2^{|\Phi^+|}=2^{n(n-1)}$.
    \item For $\g=F_4,q=\pm i$ we
      defined $X=\{\alpha_{4},\alpha_3,\alpha_3+\alpha_2,
      \alpha_{3}+\alpha_2+\alpha_1\}$. We explicitly calculate the
      braiding matrix of these root vectors:
      $$\begin{pmatrix}
	  q^{2} & q^{-1} & q^{-1} & q^{-1} \\
	  q^{-1} & q^2 & 1 & 1  \\
	  q^{-1} & 1 & q^2 & 1  \\
	  q^{-1} & 1 & 1 &  q^2 
        \end{pmatrix}$$
    Hence the Nichols algebra has a root system of type $D_4$ (with
    $\alpha_4$ the center node) and dimension  $2^{12}$. It extends the root
    system $A_1^{\times 3}$ for $B_3$ (generated by $\alpha_3,\alpha_3+\alpha_2,
    \alpha_{3}+\alpha_2+\alpha_1$) as well as  the root system $A_3$
    for  $C_3$ (generated by $\alpha_4,\alpha_3,\alpha_3+\alpha_2$).
    \item For $\g=G_2,q=\sqrt[3]{1},\sqrt[6]{1}$ we defined    
    $X=\{\alpha_1,\alpha_1+\alpha_2\}$. We explicitly calculate the braiding
    matrix of these root vectors:
    $$\begin{pmatrix}
	q^2 & q^{-1}\\
	q^{-1} & q^2 \\
        \end{pmatrix}$$
    Hence the Nichols algebra has a root system $A_2$ and dimension $3^3$.
    \item For the exotic case $\g=G_2,q=\pm i$ we defined 
    $X=\{\alpha_2,\alpha_1,2\alpha_{1}+\alpha_2\}$. We explicitly calculate the
    braiding matrix of these elements:
    $$\begin{pmatrix}
	q^6 & q^{-3} & 1 \\
	q^{-3} & q^{2} & q \\
	1 & q & q^{2}
      \end{pmatrix}\stackrel{q^4=1}{=}
      \begin{pmatrix}
	q^2 & -q^{-1} & 1 \\
	-q^{-1} & q^{2} & -q^{-1} \\
	1 & -q^{-1} & q^{2}
      \end{pmatrix}
      =\begin{pmatrix}
	\bar{q}^2 & \bar{q}^{-1} & 1 \\
	\bar{q}^{-1} & \bar{q}^{2} & \bar{q}^{-1} \\
	1 & \bar{q}^{-1} & \bar{q}^{2}
      \end{pmatrix}$$
      Hence the Nichols algebra has a root system $A_3$ and dimension $2^3$.
      However, the braiding matrix is not the standard braiding matrix from
      $u_q(A_3)$, rather for then small quantum group  $u_{\bar{q}}(A_3)$
      associated to the respective other choice of a primitive fourth root of
      unity, which is complex conjugate.
  \end{enumerate}
  \item Let $H$ denote the subalgebra of $u_q^\L$ generated by the chosen
  primitive root vectors $E_\alpha,\alpha\in X$, which span a vector space $V$.
  In all cases except the exotic case the $\N^n$-grading of these generators is
  linearly independent, hence $H$ is a $\N$-graded algebra. 
  By the universal property of the Nichols algebra we thus have a surjection
  $H\to\B(V)$. To finally show equality $u_q^\L\cong \B(V)$ we use Lusztig's
  PBW-basis of $u_q^{\L,+}$ in Theorem \nref{thm_Lusztiguq} to show in each
  case $\dim(u_q^{\L,+})=\dim(\B(V))$:
  \begin{enumerate}[i)]
  \item In the trivial case the set of all roots with $\ell_\alpha>1$ is empty,
  hence the dimension of $u_q^{\L,+}$ is $1$. In the generic case all roots
  have coinciding $\ell_\alpha>1$ hence the dimension of $u_q^{\L,+}$ is
  $\ell_\alpha^{|\Phi^+|}$.
  \item For all other cases except the exotic case $G_2,q=\pm i$ the set of all
  roots with $\ell_\alpha>1$ is precisely the set of short roots and all short
  roots fulfill $\ell_\alpha=2$ (resp. $=3$ for
  $G_2,q=\sqrt[3]{1},\sqrt[6]{2}$).  Hence the dimension of $u_q^{\L,+}$ is
  $2^N$  (resp. $3^N$) with $N$ the  number of positive short roots, i.e. $n,
  n(n-1),12,3$ for $B_n,C_n,F_4,G_2$.
  \item 
  For the exotic case $G_2,q=\pm i$ we have for long and short roots
  $\ell_\alpha=2$. Since $G_2$ has $6$ positive roots, the dimension of
  $u_q^{\L,+}$ is $2^6$. It is quite remarkable that this coincides with the
  dimension $2^6$ of the Nichols algebra of type $A_3,q=\pm i$. Note that
  a-priori it is not clear $u_q^{\L,+}$ is a graded Hopf algebra. This only
  follows after inspecting the relations between $E_2,E_1$ and $E_1,E_{112}$
  and $E_{1112},E_2$ and $E_{12},E_{112}$ in \cite{Lusz90b} Sec. 5.2, which are
  all graded with respect to $E_2,E_1,E_{112}$ having degree $1$ (except the
  $[2]E_{112}$-term for $E_1,E_{12}$, which is zero for $q=\pm i$).
  \end{enumerate}
  We notice that in each case the dimension of $\B(V)$ calculated in b)
  coincides with the dimension of $u_q^{\L,+}$ by Lusztig's PBW-basis obtained
  in c). Hence the two are isomorphic which concludes the proof.
\end{enumerate}
\end{proof}

\newpage
\section{A short exact sequence}\label{sec_shortexactsequence}
\enlargethispage{.3cm}
Lusztig has in \cite{Lusz90b} Thm 8.10. discovered a remarkable homomorphism 
from $U_q^\L(\g)$ to the ordinary universal enveloping Hopf algebra $U(\g)$ 
$$U_q^\L(\g,\Lambda)\xrightarrow{Frob}U(\g)$$
whenever $\ell$ is odd and for $\g=G_2$ not divisible by $3$. He called it
\emph{Frobenius homomorphism} to emphasize it should be viewed as a
``lift'' of the Frobenius homomorphism in characteristic
$\ell$ to the quantum group in characteristic $0$.\\

The following is a more systematic construction, using the techniques of
Nichols algebras and generalizes to the situation of small prime
divisors (which has been excluded by Lusztig and throughout the following
literature, note however Lusztig's book \cite{Lusz94} Thm. 35.1.9
for large order but small prime divisors). First, we prove that
$u_q^\L(\g,\Lambda)$ is a normal Hopf subalgebra in $U_q^\L(\g,\Lambda)$, this
relies crucially on the explicit description of the assumed kernel
$u_q^\L(\g,\Lambda)$ by Theorem \nref{thm_smalluq}. Then we
form the quotient Hopf algebra, the quotient is then the quantum Frobenius
homomorphism. Finally we inspect the quotient and prove it is (close to) a
universal enveloping of some Lie algebra $\g^{(\ell)}$.

\begin{theorem}\label{thm_main}
  Depending on $\g$ and $\ell$ we have the following exact sequences of Hopf
  algebras in the category of $\Lambda$-Yetter-Drinfel'd modules:
  $$u_q(\g^{(0)},\Lambda)^+\xrightarrow{\;\subset\;} U_q^\L(\g,
\Lambda)^+\xrightarrow{\;Frob\;}U(\g^{(\ell)})^+$$
\begin{center}
\begin{tabular}{l|ll|l|ll}
& $\g\qquad$ & $\ell=\ord(q)$ & $\g^{(0)}\quad$ &
$\g^{(\ell)}\quad$ & is braided for\\
\hline\hline
\textnormal{Trivial cases:}
& all & $\ell=1$ & $0$ & $\g$ & no \\
& all & $\ell=2$ & $0$ & $\g$ & $ADE_{n\geq 2},C_{n\geq 3},F_4,G_2$ \\
\cline{2-6}
\multirow{4}{*}{\textnormal{Generic cases:}} 
& $ADE$ & $\ell\neq 1,2$ & $\g$ & $\g$ & $\ell=2\mod 4,n\geq 2$ \\
&$B_n$ & $4\nmid \ell\neq 1,2$ & $B_n$ & $B_n$ & no \\
&$C_n$ & $4\nmid \ell\neq 1,2$ & $C_n$ & $C_n$ & $\ell=2\mod 4,n\geq 3$ \\
&$F_4$ & $4\nmid \ell\neq 1,2$ & $F_4$ & $F_4$ & $\ell=2\mod4$\\ 
&$G_2$ & $3\nmid\ell\neq 1,2,4$ & $G_2$ & $G_2$ & $\ell=2\mod 4$\\
\cline{2-6}
\multirow{8}{*}{\textnormal{Duality cases:}$\quad$}
& $B_n$ & $4|\ell\neq 4$ & $B_n$ & $C_n$ & $\ell=4\mod 8,n\geq 3$ \\
&& $\ell=4$ & $A_1^{\times n}$ & $C_n$ &  $n\geq 3$\\      
& $C_n$ & $4|\ell\neq 4$ & $C_n$ & $B_n$ &  no \\
&& $\ell=4$ & $D_n$ & $B_n$ & no \\
& $F_4$ & $4|\ell\neq 4$ & $F_4$ & $F_4$ & $\ell=4\mod 8$\\
&& $\ell=4$ & $D_4$ & $F_4$ & yes \\ 
& $G_2$ & $3|\ell\neq 3,6$ & $G_2$ & $G_2$ & $\ell=2\mod 4$\\
&& $\ell=3,6$ & $A_2$ & $G_2$ & $\ell=6$\\
\cline{2-6}
\textnormal{Exotic case:}
& $G_2$ & $\ell=4$ & $A_3$ & $G_2$ & no \\ 
\end{tabular}
\end{center}
\end{theorem}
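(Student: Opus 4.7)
The plan is to follow the three-step strategy outlined in the introduction: a reflection reduction to rank $2$, a normality argument, and an analysis of the quotient. I would first establish the \emph{pair reflection} result (Theorem \nref{thm_pairs}): given any two positive roots $\alpha,\beta \in \Phi^+$, there exists a sequence of Lusztig reflections $T_{i_1}\cdots T_{i_k}$ carrying the pair into a rank $2$ parabolic subsystem. This is a mild extension of the trick Lusztig uses in the simply-laced case; the main content is a case-by-case verification in each of the doubly- and triply-laced root systems, together with an enumeration of the Weyl orbits on pairs so the classification is complete. Once this is in hand, every identity we need between two root vectors $E_\alpha, E_\beta$ in $U_q^\L(\g,\Lambda)^+$ can be verified after applying a Lusztig automorphism inside a rank $2$ subalgebra, where computations are explicit (e.g. via \cite{Lusz90b} Sec. 5.2).

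Next I would prove that $u_q^{\L,+}$ is a normal Hopf subalgebra of $U_q^{\L,+}$ (Lemma \nref{lm_normal}), which means showing that the left adjoint action of $U_q^{\L,+}$ on itself preserves $u_q^{\L,+}$. The explicit generators of $u_q^{\L,+}$ are controlled by Theorem \nref{thm_smalluq}: in each non-exotic row of that table the generators are primitive root vectors $E_\alpha$ with $\alpha$ short (or of appropriate type), and the adjoint action of an $E_\beta \in U_q^{\L,+}$ reduces, by the reflection trick, to the rank $2$ action. There, the commutator $[E_\beta, E_\alpha]_q$ is an integer-linear combination of root vectors whose heights lie in the orbit of $X$, and these are all still in $u_q^{\L,+}$. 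The nontrivial cases are precisely those in which the adjoint action could produce a divided power $E_\gamma^{(\ell_\gamma)}$, but one checks that the coefficients in the rank $2$ commutator formulas actually vanish at the relevant root of unity, so only elements already in $u_q^{\L,+}$ appear. The exotic case $G_2, q=\pm i$ requires a direct verification using the formula for $E_{112}$ derived in the proof of Theorem \nref{thm_smalluq}.

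Having normality, I form the quotient $H := U_q^{\L,+} / (u_q^{\L,+})^+ U_q^{\L,+}$ in the braided category of $\Lambda$-Yetter-Drinfel'd modules. By Theorem \nref{thm_Lusztiguq} and the refined PBW basis (Lemma \nref{lm_SpecializationPBW}), $H$ has as a vector space basis the monomials in the divided powers $E_\alpha^{(\ell_\alpha)}$, $\alpha \in \Phi^+$. One checks directly using Lusztig's coproduct formula for $E_\alpha^{(\ell_\alpha)}$ and the fact that $E_\alpha^{\ell_\alpha}=0$ in the quotient that each such divided power is primitive in $H$. The braiding on the span $V^{(\ell)}$ of these primitives is $q^{(\ell_\alpha \alpha, \ell_\beta \beta)}$, which by Lemma \nref{lm_ellLattice} is $\pm 1$ and is $+1$ except in the listed exceptional lattice cases. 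Thus $H$ is either an ordinary enveloping algebra or an enveloping algebra in a symmetrically braided category, and it suffices to identify the Lie bracket structure: reducing pairs $(\alpha,\beta)$ to rank $2$ via the reflection trick, we compute $[E_\alpha^{(\ell_\alpha)}, E_\beta^{(\ell_\beta)}]$ modulo $u_q^{\L,+}$ in rank $2$ and match the result to a Serre relation for the appropriate dual/short/long root system.

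The main obstacle is the identification of $\g^{(\ell)}$ in the duality and exotic rows of the table: in these cases $\ell_\alpha$ is different for short and long roots, so the primitive divided powers $E_\alpha^{(\ell_\alpha)}$ do \emph{not} scale uniformly with $(\alpha,\alpha)$, and the Cartan integers computed from $(\ell_\alpha \alpha, \ell_\beta \beta)/(\ell_\alpha \alpha, \ell_\alpha \alpha)$ produce the \emph{dual} Cartan matrix, swapping short and long roots. The braided cases are controlled entirely by Lemma \nref{lm_ellLattice} applied to $\Lambda_R^{(\ell)}$. The exotic case $G_2, \ell=4$ must again be handled separately: here $\g^{(0)}=A_3$ already absorbs part of $\Phi^+(G_2)$, and one verifies by direct computation on the remaining divided powers that the quotient is still $U(G_2)^+$, with the correct (and non-braided) structure. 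Combining normality with the identification of the quotient and invoking the standard fact that in a braided category a faithfully flat extension by a normal Hopf subalgebra yields an exact sequence completes the proof.
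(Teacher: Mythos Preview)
Your proposal is correct and follows essentially the same three-step strategy as the paper: reduce pairs to rank $2$ via Theorem \nref{thm_pairs}, prove normality of $u_q^{\L,+}$ in $U_q^{\L,+}$ (Lemma \nref{lm_normal}) by explicit rank-$2$ computation on the generators supplied by Theorem \nref{thm_smalluq}, and then identify the quotient with $U(\g^{(\ell)})^+$ (Lemmas \nref{lm_braided} and \nref{lm_gell}) by computing commutators of the primitive divided powers $E_\alpha^{(\ell_\alpha)}$ again in rank $2$. One small sharpening: for normality the nontrivial content is the adjoint action of the \emph{divided powers} $E_{\alpha_i}^{(\ell_i)}$ (not of undivided $E_\beta$, which for $\ell_\beta>1$ already lie in $u_q^{\L,+}$), and the paper isolates this as the braided derivation $\delta_i=[E_{\alpha_i}^{(\ell_i)},\_]$ before checking it case by case.
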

The author would be very interested to
obtain a similar list for affine Lie algebras as well other Nichols
algebra extensions (see Problems \nref{prob_affine} and
\nref{prob_extension}).\\
\begin{proof}
  The rest of this article is
  devoted to prove this theorem as follows: 
 \begin{itemize}
  \item The structure of $u_q^\L(\g,\Lambda)^+$ in the table column $\g^{(0)}$
  was already determined in Theorem  \nref{thm_smalluq}.
  \item By Theorem \nref{thm_pairs} for the root systems in question all pairs
    of roots can be reflected simultaneously into some parabolic subgroup of
    rank $2$. This will be excessively used in the following two steps to
    reduce all calculations to rank $2$.
  \item In Lemma \nref{lm_normal} we prove $u_q^\L(\g,\Lambda)$ is a normal
    Hopf subalgebra of $U_q^\L(\g,\Lambda)^+$ in the category of 
    $\Lambda$-Yetter-Drinfel'd modules. By simultaneous reflection we will only
    have to check rank $2$, then we use the convenient generator set for
    $u_q^\L(\g,\Lambda)$. Note that for the exotic case $G_2,q=\pm 1$ is would
    not suffice to check the action on simple root vectors (as one might do),
    since there is an additional algebra generator $E_{112}$.
  \item  Then we will then consider the quotient of
    $U_q^\L(\g,\Lambda)^+$ by the normal Hopf subalgebra $u_q^\L(\g,\Lambda)$.
    We show in Lemma \nref{lm_gell} that the quotient is generated by primitive
    elements $E_{\alpha_i}^{(\ell_{\alpha_i})}$ with the (possibly braided)
    commutator structure as prescribed in the table column $\g^{(\ell)}$. We
    will do so again by trick a) to reduce to rank $2$ and check the
    isomorphisms explicitly. Note that the braiding corresponds to even lattices
    in the Lie algebra Lemma \nref{lm_ellLattice} and that (independently) the
    dual root system is formed from the exceptions in Lemma \nref{lm_commute}. 
 \end{itemize}
Note that the cases with $\ord(q^2)>d_\alpha$ have been verified in \cite{Lusz94}.
\end{proof}

\begin{remark}
  Recently Angiono has in \cite{An14} characterized (dually) the Borel part of
  the Kac-Procesi-DeConcini-Form $U_q^\K(\g)^+$ purely in terms of
  so-called distinguished Pre-Nichols algebra in the braided category of
  $\Lambda$-Yetter-Drinfel'd Modules. These algebras are much more
  general and all come with a version of a Frobenius homomorphism.
\end{remark}

\subsection{Orbits of pairs in root systems}

We will start with a technical theorem (which may be known) that should be in
general helpful for quantum groups of high rank $\g$ by reducing issues to
rank $2$. It has been already observed by Lusztig in \cite{Lusz90b} Sec. 3.6 for
the simply-laced case as part of an explicit description of all roots by
diagrams $\Gamma_i$. 

\begin{theorem}\label{thm_pairs}\footnote{I am very thankful to the 
	referee for pointing out the much shorter and conceptual proof given here for a), which also 
	works for larger $n$-tuples of roots.}~
\begin{enumerate}[a)]
  \item For any pair of roots $\alpha\neq \pm\beta$ there is a Weyl
  group element mapping $\alpha,\beta$ \emph{simultaneously} into a rank $2$
  parabolic subsystem $\langle\alpha_i,\alpha_j\rangle$.\\
	More generally, for any set $A$ of roots 
	of order $|A|<\rank(\g)$ there is a Weyl group element mapping $A$ simultaneously into a 
	parabolic subsystem of rank $|A|$.
  \item The type of the rank $2$ subsystem $\langle\alpha_i,\alpha_j\rangle$
  (including lengths) is uniquely determined by $\alpha,\beta$ in a) and all such
  subsystems are in a single Weyl group orbit, except three parabolic $A_1\times
  A_1$ in $D_4$ interchanged
  by the triality diagram automorphism as well as two orbits for $D_{n},n\geq
  5$, namely the parabolic $A_1\times A_1$ consisting of the two tiny
  legs $\alpha_n,\alpha_{n-1}$ and all remaining parabolic $A_1\times A_1$.
  \item Unordered pairs of roots in a Lie algebra of rank $2$ are classified by
  length and angle up to action of the Weyl group.
\end{enumerate}
Altogether, Weyl orbits of unordered pairs of roots are in bijective
correspondence to types of rank $2$ parabolic subsystems, lengths and angle
with the mentioned exceptions.
\end{theorem}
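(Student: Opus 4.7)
The plan is to prove (a) by a single geometric argument in the reflection representation, from which (b) and (c) will follow by combining Weyl-invariance of lengths and angles with the classification of rank $2$ subsystems.

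For (a), I set $V := \mathrm{Span}_{\mathbb{R}}(A)$; since $|A| < \rank(\g)$, the orthogonal complement $V^\perp$ has positive dimension. The crucial step is to choose a \emph{regular} element $v \in V^\perp$, meaning that every root $\gamma \in \Phi$ with $\gamma \perp v$ already lies in $V$. Such $v$ form a dense open subset of $V^\perp$, since the complement is contained in the finite union of proper subspaces $\gamma^\perp \cap V^\perp$ for $\gamma \in \Phi \setminus V$. Next, I apply a Weyl group element $w$ so that $w(v)$ lies in the closure $\overline{C}$ of the fundamental chamber; expanding $w(v) = \sum_{i \in I} c_i \lambda_i$ with $c_i \geq 0$, set $J := \{i \in I : c_i = 0\}$. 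Using that every positive root is a non-negative integer combination of simple roots, a short computation shows that the roots of $\Phi$ orthogonal to $w(v)$ are exactly the parabolic subsystem $\Phi_J$ of rank $|J|$. The $|A|$ linearly independent roots in $w(A) \subset \Phi_J$ give $\rank(\Phi_J) \geq |A|$, while the regularity of $v$ forces every $\gamma \in \Phi_J$ to satisfy $w^{-1}(\gamma) \in V$, so $\Phi_J \subset w(V)$ and $\rank(\Phi_J) \leq \dim V \leq |A|$, yielding the equality.

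For (b), the Weyl-invariance of the Killing form shows that the type and root lengths of the rank $2$ parabolic containing $w(\alpha), w(\beta)$ are already determined by $\alpha, \beta$. The orbit statement is then a matter of enumerating the $W$-conjugacy classes of rank $2$ subsets $\{i, j\} \subset I$ for each simple Lie algebra, which one reads off its Dynkin diagram. Subsets of a fixed configuration lie in a single $W$-orbit, with exactly the stated exceptions: in $D_4$ triality splits the three rank $2$ parabolic $A_1 \times A_1$'s into three distinct orbits, and in $D_{n \geq 5}$ the $A_1 \times A_1$ generated by the two tiny legs $\alpha_{n-1}, \alpha_n$ forms a separate orbit from the remaining $A_1 \times A_1$'s. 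For (c), inside any of the four rank $2$ root systems $A_1 \times A_1, A_2, B_2, G_2$ the Weyl group acts transitively on roots of each fixed length; after normalizing the first root of an unordered pair, the second is determined up to the stabilizer of the first, which is the reflection through it. A direct case-by-case inspection then shows that unordered pairs are classified exactly by length and angle.

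The main obstacle is the rank equality in (a): finding any nonzero $v \in V^\perp$ is trivial, but arranging that the resulting parabolic $\Phi_J$ has rank \emph{exactly} $|A|$ rather than something strictly larger is precisely what the regularity of $v$ accomplishes. This is the conceptual improvement over a case-by-case argument, and (as the footnote indicates) extends without modification to tuples $A$ of any size strictly below $\rank(\g)$.
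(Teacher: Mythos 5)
Your argument for part (a) is correct and takes a genuinely different route from the paper at the key step. Both proofs reflect an orthogonal vector $v\perp A$ into the closed fundamental chamber and observe that $\Phi\cap w(v)^\perp$ is a parabolic subsystem, but the paper then only concludes that $A$ lands in some \emph{proper} parabolic and iterates by induction on the rank. Your regularity trick --- choosing $v$ generically in $V^\perp$ so that $\gamma\perp v$ forces $\gamma\in V$ --- pins down $\Phi_J\subset w(V)$ and hence $\rank(\Phi_J)=\dim V$ in one pass, eliminating the induction; this is a clean improvement. (One small imprecision: you assert ``$|A|$ linearly independent roots,'' which is not part of the hypothesis; in general one gets $\rank(\Phi_J)=\dim V\leq|A|$, which is still good enough.)

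Parts (b) and (c) are not actually proved, and this is a genuine gap. For (b), ``enumerating $W$-conjugacy classes of rank $2$ subsets, which one reads off the Dynkin diagram'' conflates the list of \emph{types} of rank-$2$ parabolics (which one does read off) with the finer question of how many Weyl orbits share a given type. The latter is exactly where the work lies: the paper devotes a case-by-case analysis to showing that each type yields a single orbit in $A_n,B_n,C_n,E_n,F_4$ (by embedding into $A_k$ subsystems), and needs the stabilizer-order counting of Example \nref{exm_pair} to establish the $D_4$ triality split and the separate $\{\alpha_{n-1},\alpha_n\}$ orbit in $D_{n\geq5}$. Your write-up states these exceptions as conclusions rather than deriving them. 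Moreover, your invariance argument in (b) (``Weyl-invariance of the Killing form shows the type \dots is already determined'') is too quick: the paper points out that lengths and angle alone cannot distinguish, say, two orthogonal long roots inside a $B_2$ parabolic from those generating an $A_1^{\mathrm{long}}\times A_1^{\mathrm{long}}$; one must use that $\Phi_J=\Phi\cap\mathrm{Span}(w\alpha,w\beta)=w\bigl(\Phi\cap\mathrm{Span}(\alpha,\beta)\bigr)$. Similarly for (c), ``a direct case-by-case inspection then shows'' is the claim to be proved, not a proof.
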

Before we proceed to the proof we give examples how this theorem works and
fails:
\begin{example}
  For $B_n,n\geq 4$ the possible types of
  parabolic subsystems of rank $2$ are  $B_2,A_2^{long}, A_1^{long}\times
  A_1^{long}, A_1^{short}\times A_1^{long}$. In the subsystem $B_2$ there are
  four orbits (classified by their lengths and angle), in $A_2$ are two orbits
  (classified by their lengths and angle)  and in the others each one orbit.
  Hence the Theorem returns $8$ orbits of pairs, each with an explicit
  representative inside a rank  $2$ parabolic subgroup.
\end{example}
\begin{example}\label{exm_pair}
  We show that the theorem fails for the exception $D_4$:
  The Weyl group acts transitively, so the one-point stabilizer (fixing
  $\alpha$) has order $|W|/|\Phi|=8$. The number of $\beta$ in
  an orbit of pairs $(\alpha,\beta)$ has to divide this order, the quotient
  being the order of the stabilizer of the (ordered) pair. For $\g=D_4$ the positive roots 
	orthogonal to a given $\alpha$ (say the highest root $\omega$) are three simple roots not in the
  center $\alpha_1,\alpha_2,\alpha_3$. The three pairs $\{\omega,\alpha_i\}$
  are interchanged by the triality diagram automorphism, but since $3\nmid 8$
  they have to belong to different Weyl group orbits. After reflection, these
  three orbits can be recognized as the three parabolic subsystems 
  $\{\alpha_i,\alpha_j\}$ of type $A_1\times A_1$, hence statement a) holds, but not b).
\end{example}

\begin{proof}[Proof of Theorem \nref{thm_pairs}]
For $\g$ of rank $2$ the statements a) and b) are trivial, the statement c)
follows by explicit inspection. Note that in $A_2$ there are two orbits of
\emph{ordered} pairs interchanged by the diagram automorphism.
\begin{enumerate}[a)]
	\item The following proof shows in fact that any set of roots $A\subset \Phi^+$ of order $k<n=:
	\rank(\g)$ can be simultaneously reflected into a parabolic subsystem of rank $k$. By induction 
	it suffices to show that we can reflect $A$ into a parabolic subsystem of rank $<n$, then we my 
	proceed until $k=n$. Let $v$ be 
	a vector in 
	$\Phi\otimes \mathbb{R}$, which is of dimension $n$, such that $v\perp A$. Consider the set 
	$$M_v:=\{\gamma\in\Phi^+\mid (v,\gamma)<0\}$$ 
	If $M_v\neq\varnothing$ then there exists at least one simple root $\alpha_i\in M_v$ 
	(otherwise, being positive linear combinations, no positive root could be in $M_v$). If we apply 
	a reflection $s_i$, then since $(s_i\alpha,s_i\beta)=(\alpha,\beta)$: 
	$$M_{s_iv}=\{\gamma\in \Phi^+\mid (s_iv,\gamma)<0\}
	=\Phi^+\cap s_i(M_v)=s_i\left(M_v\backslash\{\alpha_i\}\right)$$
	So in terms of cardinality $|M_{s_iv}|=|M_v|-1$. Hence by successive reflection one can find $v
	\perp A$ with $M_v=\varnothing$, especially all 
	$(v,\alpha_i)\geq 0$. But this 
	implies that for any $v\perp \alpha=\sum_i n_i\alpha_i$ we have $v\perp\alpha_i$ for all $n_i
	\neq 0$. Hence $v^\perp\cap \Phi$ is a parabolic subsystem of rank $<n$ and by construction it 
	contains $A$.
  \item It is clear that the type of the rank $2$ parabolic subsystem (including
  lengths) is uniquely determined by projecting to the  vector subspace (note
  the statement is more trivially true in most cases by  angle and length, but
  e.g. orthogonal long roots in $B_2$ are distinguished  from orthogonal roots
  in $A_1^{long}\times A_1^{long}$, since in the  former there exists a
  $2\gamma=\alpha+\beta$).\\

  We wish to show that all rank $2$ parabolic subsystems are in a single
  Weyl orbit with the exception that the three resp. two parabolic $A_1\times A_1$
  in $D_4$ resp. $D_n$ interchanged by diagram automorphism are in different
  orbits. This is done by inspecting every case of $\g$ explicitly (and mostly
  reduce to previous cases):
  \begin{enumerate}[i)]
   \item Let $\g=A_n,n\geq 3$, then we have rank $2$ parabolic subsystems of
    type $A_2$ and $A_1\times A_1$. It is easy to shift a subsystem
    $\{\alpha_i,\alpha_{i+1}\}$ to $\{\alpha_{i-1},\alpha_{i}\}$ by reflecting on
    $\alpha_{i-1},\alpha_i,\alpha_{i+1}$. Similarly, one can easily give
    direct expressions that shift any $\alpha_i,\alpha_j,|i-k|>1$ to any
    other such pairs by shifting each one separately (more abstractly spoken,
    by using the transitivity of the Weyl group of a suitable subsystem).
   \item Let $\g=B_n,n\geq 3$ (resp $C_n$ by duality) then we have rank $2$
    parabolic subsystems of type $B_2$ and $A_2^{long}$ and for $n\geq 4$ of
    type $A_1^{long}\times A_1^{long}$. But the parabolic subsystem $B_2$ is
    unique and all the other subsystems lay in the parabolic subsystem of type
    $A_{n-1}^{long}$, for which the claim has been shown in a).
   \item Let $\g=D_n,\;n\geq 4$. Any two parabolic subsystems of type $A_2$ lay in
    a common subsystem of type $A_{n-1}$, hence by i) we are finished. For the
    parabolic case of type $A_1\times A_1$ we have already clarified the
    situation for $D_4$ in Example \nref{exm_pair}, so let's assume $n\geq 5$:
    By transitivity of the Weyl group, fix $\alpha=\alpha_1$ the outmost vertex, then the roots 
		orthogonal to $\alpha_1$ are of the form $k\cdot\alpha_1+2k\cdot\alpha_2+\cdots$. The roots 
		in the class $k=0$ are by definition the parabolic 
		subsystem $D_{n-2}$ and explicit inspection of the root system shows the only element in the 
		class $k=1$ is the highest root and there are no roots for $k>1$. All roots in the first 
		class can by the mapped to each other by the Weyl group of
    $D_{n-2}$  without affecting $\alpha_1$. We only have to show that this is
    not possibly for the highest root as well and we argue as follows: The Weyl
    group of $D_n$ has order $2^{n-1}n!$ and the root system has $2n(n-1)$   
    roots. Hence the one-point-stabilizer has double order as the contained
    Weyl group of $D_{n-2}$ and additionally contains the reflection on the
    highest root $\omega$ of $D_n$, which accounts for the entire stabilizer.
    But none of these elements can map $\omega$ to something else than
    $\pm\omega$, which shows it forms a single orbit. It is easy to
    see that the pair $\alpha_1,\omega$ has to be in the same orbit as the
    simple roots $\alpha_n,\alpha_{n-1}$ at the tiny legs of $D_n$.
   \item Let $\g=E_n,\;n=6,7,8$. Any two parabolic subsystems are in a
    parabolic subsystem of type $A_k$ hence by i) we are finished. For a
    parabolic subsystem $\alpha_i,\alpha_j$ of type $A_1\times A_1$ we have
    three cases: Either both $\alpha_i,\alpha_j$ are not the
    tiny leg $\alpha_2$, or say $\alpha_i$ is the tiny leg and $\alpha_j$ is on
    either of the other legs. All subsystems in each class are in a parabolic
    subsystem of type $A_{n-1},A_{n-2},A_4$, hence each case for an orbit by
    i). It remains to map a representative of each case to
    another case (which was not possible for $D_n$): We move $\alpha_j$ (not
    the tiny leg) to an outmost vertex without affecting $\alpha_i$ using a
    parabolic subsystem of type $A_k$. Then we may change $\alpha_i$ to the
    other case by using the $A_3$ subsystem around the center node.
   \item Let $\g=F_4$, then we have unique rank $2$ parabolic subsystems of type
    $A_2^{long}$, $A_2^{short}$, $B_2=C_2$ and three parabolic
    subsystems of type $A_1^{long}\times A_1^{short}$, namely
    $\{\alpha_1,\alpha_3\},\{\alpha_1,\alpha_4\},$ $\{\alpha_2,\alpha_4\}$.
    These three parabolic subsystems can be reflected to each other exactly as
    in the $A_n$ case a), explicitly:
    \begin{align*}
      &\{\alpha_1,\alpha_3\}
      \stackrel{\alpha_4}{\mapsto}\{\alpha_1,\alpha_3+\alpha_4\}
      \stackrel{\alpha_3}{\mapsto}\{\alpha_1,\alpha_4\}\\
      &\{\alpha_1,\alpha_4\}
      \stackrel{\alpha_2}{\mapsto}\{\alpha_1+\alpha_2,\alpha_4\}
      \stackrel{\alpha_1}{\mapsto}\{\alpha_2,\alpha_4\}
    \end{align*}
  \end{enumerate}
  \item We finally convince ourselves for $A_2,B_2,G_2$ that lengths and angle
  of $\{\alpha, \beta\}$ classify the pairs: Fix one root by transitivity, then
  there is a unique choice $\pm\beta$, for $B_2,G_2$ we can reflect on a root
  orthogonal to $\alpha$, for $A_2$ we can reflect $\{\alpha,-\beta\}$ to
  $\{\beta,\alpha\}$. Note that the \emph{ordered} pairs $\alpha_1,\alpha_2$
  and $\alpha_2,\alpha_1$ are not in the same orbit.
\end{enumerate}
\end{proof}

\subsection{Adjoint action on the small quantum group}

\begin{lemma}\label{lm_normal}
  The Hopf subalgebra $u_q^\L(\g,\Lambda)^+\subset U_q^\L(\g,\Lambda)^+$ from
  Definition \nref{def_uqL} is a normal Hopf subalgebra, i.e. stable under the
  adjoint action. We explicitly give the respective (skew-)derivations in the
  degenerate cases of Theorem \nref{thm_smalluq}.
\end{lemma}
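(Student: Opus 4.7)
The plan is to verify ad-stability of $u_q^\L(\g,\Lambda)^+$ by checking it on a chosen set of algebra generators of $U_q^\L(\g,\Lambda)^+$ acting on a chosen set of algebra generators of $u_q^\L(\g,\Lambda)^+$. Stability under $\Lambda$ is immediate from the $\Lambda$-grading, so the real content is the action of the $E_{\alpha_i}^{(r)}$. Because $u_q^\L$ is already a Hopf subalgebra by Theorem \nref{thm_Lusztiguq}, the adjoint action of any element already lying in $u_q^\L$ is inner and automatically stabilizes it; this leaves only $\ad$ by simple root vectors $E_{\alpha_i}$ with $\ell_{\alpha_i}=1$ and by divided powers $E_{\alpha_i}^{(r)}$ with $r\ge \ell_{\alpha_i}$, applied to the explicit primitive generators $E_\alpha$, $\alpha\in X$, produced in Theorem \nref{thm_smalluq}.

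The second ingredient is Theorem \nref{thm_pairs}: any pair $(\alpha_i,\alpha)$ consisting of a simple root and a root in $X$ can be simultaneously reflected into a rank $2$ parabolic subsystem. The Lusztig braid automorphisms $T_w$ act on $U_q^\L$ compatibly with divided powers and preserve the distinguished generator sets, so they carry the pair $(\ad_{E_{\alpha_i}^{(r)}}, E_\alpha)$ to an analogous pair inside a rank $2$ subalgebra. The entire verification therefore reduces to a finite list of rank $2$ checks, each amenable to direct computation with Lusztig's explicit formulas in \cite{Lusz90b} Sec. 5.2 and 5.4. In the generic cases $\ord(q^2)>d_\alpha$ the classical Serre relations and their divided-power counterparts give the action on the nose; in the degenerate short-root cases $B_n,C_n,F_4$ at $q=\pm i$ and $G_2$ at $\sqrt[3]{1},\sqrt[6]{1}$, a short rank $2$ calculation in $B_2$ respectively $A_2$ expresses $\ad_{E_{\textnormal{long}}^{(r)}}(E_{\textnormal{short}})$ as a scalar multiple of another short root vector already in $u_q^{\L,+}$. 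Recording these scalars supplies the explicit (skew-)derivations promised in the statement.

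The main obstacle will be the exotic case $\g=G_2$, $q=\pm i$, where $X$ includes the non-simple generator $E_{2\alpha_1+\alpha_2}$ and all $\ell_{\alpha_i}=2$, so the nontrivial verification concerns divided powers $E_{\alpha_i}^{(r)}$, $r\ge 2$, rather than the simple root vectors themselves (which already lie in $u_q^{\L,+}$). Here I would use the rewriting $E_{112}=-q^2\bigl(E_2 E_1^{(2)}-q^{-6}E_1^{(2)}E_2\bigr)-q E_{12}E_1$ obtained in the proof of Theorem \nref{thm_smalluq}, together with Lusztig's relation \cite{Lusz90b} Sec. 5.2 (a6), to reduce $\ad_{E_{\alpha_i}^{(r)}}(E_{112})$ to an expression in $E_{\alpha_1}, E_{\alpha_2}, E_{12}, E_{112}$ and $\Lambda$; after simplification at $q=\pm i$ this indeed lands in $u_q^{\L,+}$. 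Collecting the resulting formulas in this and the other degenerate cases yields the explicit (skew-)derivations required by the lemma.
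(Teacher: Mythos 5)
Your overall strategy matches the paper's: both reduce the verification to rank~$2$ parabolic subsystems via Theorem~\ref{thm_pairs} and then invoke Lusztig's explicit commutation formulae in \cite{Lusz90b} Sec.~5, and both give special attention to the exotic $G_2$, $q=\pm i$ case where the generator $E_{112}$ has to be handled by hand.

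However, there is a genuine gap in your reduction step. You write that the braid automorphisms $T_w$ ``carry the pair $(\ad_{E_{\alpha_i}^{(r)}}, E_\alpha)$ to an analogous pair inside a rank $2$ subalgebra.'' This is not correct as stated: $T_w$ is an \emph{algebra} automorphism but not a coalgebra map, so it does not intertwine the adjoint action --- in general $T_w\big(\ad_x(y)\big)\neq\ad_{T_w(x)}(T_w(y))$, because $\ad_x$ involves the coproduct and antipode of $x$. So reflecting the \emph{adjoint action} into rank $2$ is not directly available. The paper circumvents this by first using the coproduct formula $\Delta(E_{\alpha_i}^{(k)})=\sum_{b}q^{d_{\alpha_i}b(k-b)}E_{\alpha_i}^{(k-b)}\otimes E_{\alpha_i}^{(b)}$ to decompose $\ad_{E_{\alpha_i}^{(\ell_i)}}$ into the braided commutator $\delta_i=[E_{\alpha_i}^{(\ell_i)},-\,]$ (from the terms $b=0,\ell_i$) plus a remainder whose factors $E_{\alpha_i}^{(b)}$, $0<b<\ell_i$, already lie in $u_q^{\L,+}$, hence trivially land there. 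Only $\delta_i$ needs to be checked, and $\delta_i$, being a purely algebraic braided commutator, \emph{is} compatible with the algebra automorphisms $T_w$. Without this decomposition your reflection step does not close; with it, your outline reduces to the paper's argument, including the rank-$2$ computations you describe and the same treatment of the exotic case. As a minor point, it also suffices to check $\ad_{E_{\alpha_i}^{(\ell_i)}}$ alone rather than all $r\ge\ell_{\alpha_i}$, since by Lemma~\ref{lm_SpecializationPBW} the subalgebra $\bigoplus_r E_{\alpha_i}^{(r)}\C$ is generated by $E_{\alpha_i}$ (already in $u_q^{\L,+}$ when $\ell_i>1$) and $E_{\alpha_i}^{(\ell_i)}$.
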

\begin{proof}
  For $\ord(q^2)>d_\alpha$ Lusztig obtained $u_q^\L(\g,\Lambda)^+$ in
  \cite{Lusz94} Thm. 35.1.9 as kernel of his explicit Frobenius homomorphism,
  so it remains to check the degenerate cases in Theorem \nref{thm_smalluq}.
  Note that our proof works in other cases as well.\\

  To calculate $\ad_{E_{\alpha}^{(\ell_\alpha)}}(E_{\beta})$ we may invoke 
Theorem \nref{thm_pairs}
to find a Weyl group element that maps $\alpha,\beta$  to a rank $2$ parabolic
subsystem. It hence suffices to check the cases of rank $2$. Using another
reflection we may assume $\alpha$ to be a simple  root. Moreover it suffices to
check normality on a set of generators for   $u_q^\L(\g,\Lambda)^+$ as
explicitly given in Theorem \nref{thm_smalluq}. From
 $$\Delta(E_{\alpha_i}^{(k)})=\sum_{b=0}^kq^{d_{\alpha_i}b(k-b)}
  E_{\alpha_i}^{(N-b) } \otimes E_{\alpha_i}^{(b)}$$ 
  we see that the adjoint action of any $E_{\alpha_i}^{(\ell_i)}$ is the sum of
  the braided commutator $\delta_i:=[E_{\alpha_i}^{(\ell_i)},\_]$
  and lower terms $E_{\alpha_i}^{(k)},k<\ell_i$ that are by definition
completely
in $u_q^{\l,+}$. These $\delta_i$ are (sometimes symmetrically braided)
derivations, and have been introduced by Lusztig, who states in
\cite{Lusz90b} Lm. 8.5 that they preserve
$u_q^\L$ (for $2\nmid \ell$ and $3\nmid \ell$ for $\g=G_2$). We will restrict
ourselves to computing $\delta_i$ in each degenerate case, since they capture
the nontrivial (non-inner) part of the adjoint action on $u_q^{\L,+}$ and are
invariant under reflection (which is an algebra map). Thus we can prove
normality:\\

  We calculate $\delta_i=[E_{\alpha_i}^{(\ell_i)},\_]$ on each algebra
  generator of $u_q^{\L,+}$ as given in Theorem \nref{thm_smalluq}. Whenever
  $\alpha_i+\beta\not\in\Phi^+$ we have $\delta_i(E_\beta)=0$ except possibly
  the four cases in Lemma \nref{lm_commute}
  $$B_2:\;(\alpha_{112},\alpha_2)\qquad
    G_2:\;(\alpha_{11122},\alpha_2),
    (\alpha_{112},\alpha_2),(\alpha_1,\alpha_{11122})$$
  where the last two are in a Weyl orbit. For the remaining generators we
  proceed  case-by-case using the commutation formulae in \cite{Lusz90b}
  Sec. 5:
\begin{enumerate}[a)]
  \item For $\g=A_1\times A_1$ there is no pair $\alpha+\beta\in\Phi$.
For $q=-1$ we  have $[E_\alpha,E_\beta]=-[E_\alpha,E_\beta]$ hence both
$\delta_1,\delta_2$   vanish.
  \item For $\g=A_2,q=\pm i$ (parabolic in $C_n,F_4,q=\pm i$) we have
  $\ell_1=\ell_2=2$ and we check the two pairs with $\alpha_i+\beta\in\Phi$:
  \begin{align*}
    \delta_2(E_1)
    &=E_2^{(\ell_2)}E_2-q^{-\ell_2}E_2E_1^{(\ell_2)}\\
    &=E_2^{(\ell_2)}E_2-q^{-\ell_2}
   \sum_{\substack{r+s=\ell_1\\s+t=1}}q^{tr+s}E_2^{(r)}E_{12}^{(s)}E_1^{(t)}\\
    &=E_2^{(\ell_2)}E_2-q^{-\ell_2+\ell_1}E_2^{(\ell_2)}E_2	
    -q^{-\ell_2+1}E_2^{(\ell_2-1)}E_{12}
    \stackrel{\ell_2=2}{=}qE_2E_{12}\\
    \delta_1(E_2)
    &=E_1^{(\ell_1)}E_2-q^{-\ell_1}E_2E_1^{(\ell_1)}\\
    &=\sum_{\substack{r+s=1\\s+t=\ell_1}}q^{tr+s}E_2^{(r)}E_{12}^{(s)}E_1^{(t)}
    -q^{-\ell_1}E_2E_1^{(\ell_1)}\\
    &=(q^{\ell_1}-q^{-\ell_1})E_1^{(\ell_1)}
    +qE_{12}E_1^{(\ell_1-1)}
    \stackrel{\ell_1=2}{=}qE_{12}E_1
  \end{align*}
  \item For $\g=B_2,q=\pm i$ the subalgebra $u_q^{\L,+}$ (in fact of type
  $A_1\times A_1$) is generated by the short root vectors $E_1,E_{12}$ and
  $\ell_1=2,\ell_2=1$ so we have to check the following two pairs with
  $\alpha_i+\beta\in\Phi$:
  \begin{align*}
    \delta_2(E_1)
    &=E_2E_1-q^{-2}E_1E_2\\
    &=E_2E_1-q^{-2}\left(q^2E_2E_1+q^2E_{12}\right)=-E_{12}\\
    \delta_1(E_{12})
    &=E_1^{(2)}E_{12}-E_{12}E_1^{(2)}\\
    &=\sum_{\substack{r,s,t\geq 0\\r+s=1\\s+t=2}}
    q^{-sr-st+s} \left(\prod_{i=1}^s\left(q^{2i}+1\right)\right)
    E_{12}^{(r)}E_{112}^{(s)}E_{1}^{(t)}-E_{12}E_1^{(2)}\\
    &=(q^{2}+1)E_{112}E_{1}\stackrel{q=\pm i}{=}0\\
  \end{align*}
  Note that $E_{112}$ would not have been in $u_q^{\L,+}$.
  \item For $\g=G_2,q=\sqrt[3]{1},\sqrt[6]{1}$ the
  subalgebra $u_q^{L,+}$ (in fact of type $A_2$) is generated by the short root
  vectors  $E_{\alpha_1},E_{\alpha_{12}}$ and we have
  $\ell_{1}=\ell_{12}=\ell_{112}=3,\ell_{2}=\ell_{1112}=\ell_{11122}=1$. We
  denote $\epsilon:=q^3=\pm 1$. We have to check two pairs
  $\delta_2(E_1)$ and $\delta_2(E_{12})$. Reflection on $\alpha_2$ maps
  $\alpha_2\leftrightarrow \alpha_{12}$ hence we may alternatively check
  $\delta_{12}(E_{2})$:
  \begin{align*}
    \delta_2(E_1)
    &=E_2E_1-q^{-3}E_1E_2
    \stackrel{(a2)}{=}E_2E_1-q^{-3}(q^3E_2E_1+q^3E_{12})=-E_{12}\\
    \delta_{12}(E_{2})
    &=E_{12}^{(3)}E_2-q^{-9}E_2E_{12}^{(3)}\\
    &\stackrel{(a3)}{=}E_{12}^{(3)}E_2-q^{-9}(q^3E_{12}^{(3)}E_1
    +q(q+q^{-1})E_{12}^{(2)}E_{112}+q^{-1}(q^2+1+q^{-2})E_{12}E_{11122})\\
    &\stackrel{{\mathrm ord}(q^2)=3}{=}-\epsilon qE_{12}^{(2)}E_{112}
  \end{align*}
  This is a a product of short root
  vectors $E_{\gamma}^{(k)},k<3$, so again in $u_q^{\L,+}$. Note that
  $E_{11122}$ would not have been in $u_q^{\L,+}$.
  \item For $\g=G_2,q=\pm i$ we have all $\ell_\alpha=2$ so all
  $E_\alpha^{(k)},k<2$ are in $u_q^{\L,+}$. However the subalgebra 
  $u_q^{\L,+}$  (in fact of type $A_3$)  is generated by the short root vectors 
  $E_1,E_{2},E_{112}$; this is why  having Theorem \nref{thm_smalluq} is crucial
  for the proof of this theorem. Thus we have to check the four pairs
  $\delta_1(E_2),\delta_2(E_1),\delta_1(E_{112})$
  as well as the exception in Lemma \nref{lm_commute} $\delta_2(E_{112})$. The
  first two pairs are easy:
  \begin{align*}
  \delta_1(E_2)
  &=E_1^{(2)}E_2-q^{-6}E_2E_1^{(2)}\\
  &\stackrel{(a6)}{=}q^6E_2E_1^{(2)}+q^5E_{12}E_1
  +q^4E_{112}-q^{-6}E_2E_1^{(2)}\stackrel{q=\pm i}{=}qE_{12}E_1+E_{112}\\
  \delta_2(E_1)
  &=E_2^{(2)}E_1-q^{-6}E_1E_2^{(2)}\\
  &\stackrel{(a2)}{=}E_2^{(2)}E_1
  -q^{-6}\left(q^6 E_2^{(2)}E_1+q^3E_2E_{12}\right)=-q^{-3}E_2E_{12}
  \end{align*}
  For the other two pair we have to proceed as follows: The  Weyl group elements
  $T_2T_1T_2$ and $T_2T_1$ both map $\alpha_{112}\mapsto\alpha_1$, the first
  one maps $\alpha_1\mapsto \alpha_{112}$ and the second one maps
  $\alpha_2\mapsto \alpha_{11122}$. We may hence alternatively calculate
  \begin{align*}
    \delta_{112}(E_1)
    &:=E_{112}^{(2)}E_1-q^2E_1E_{112}^{(2)}\\
    &\stackrel{(a4)}{=}E_{112}^{(2)}E_1
    -q^2\left(q^{-2}E_{112}^{(2)}E_1+q^{-3}(q^2+1+q^{-2})E_{112}E_{1112}\right)
    \stackrel{q=\pm i}{=}q^{-1}E_{112}E_{1112}\\
    \delta_{11122}(E_1)
    &:=E_{11122}^{(2)}E_1-E_1E_{11122}^{(2)}\\
    &\stackrel{(a7)}{=}E_{11122}^{(2)}E_1-E_{11122}^{(2)}E_1
    -q^{-4}(1-q^4)E_{11122} E_{112}^{(2)}
    \stackrel{q=\pm i}{=}0
  \end{align*}
  Note that $E_{112}^{(2)}$ would not have been in $u_q^{\L,+}$.
\end{enumerate}
\end{proof}

\subsection{Structure of the quotient}

We have proven in Lemma \nref{lm_normal} that the Hopf subalgebra
$u_q^{\L,+}\subset U_q^{\L,+}$ described in Theorem \nref{thm_smalluq} is
normal. We may hence consider the left ideal and two-sided coideal
$U_q^{\L,+}\ker_{\epsilon}(u_q^{\L,+})$ which is by normality a two-sided Hopf
ideal. We now form the Hopf algebra quotient:
\begin{definition} Define the following Hopf algebra in the
category of $\Lambda$-Yetter-Drinfel'd modules:
$$H:=U_q^{\L,+}/U_q^{\L,+}\ker_{\epsilon}(u_q^{\L,+})$$
\end{definition}

The goal of this section is to analyze $H$ and prove it is isomorphic to the
a Hopf algebra $U(\g^{(\ell)})^+$ with $\g^{(\ell)}$ the Lie algebra given for
each case in the statement of Theorem \nref{thm_main}.

\begin{lemma}
  $H$ has as vector space a PBW-basis consisting of monomials
  in $E_{\alpha}^{\ell_\alpha}$ for each positive root
  $\alpha$. 	
\end{lemma}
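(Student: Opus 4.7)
The plan is to combine the refined PBW basis of $U_q^{\L,+}$ from Lemma \nref{lm_SpecializationPBW} with the explicit description of $u_q^{\L,+}$ in Theorem \nref{thm_Lusztiguq} to show that $U_q^{\L,+}$ is free as a (left, or equivalently right) $u_q^{\L,+}$-module with basis given by ordered monomials in the divided powers $E_\alpha^{(\ell_\alpha)}$, $\alpha\in\Phi^+$. Once this freeness is established, the quotient $H=U_q^{\L,+}/U_q^{\L,+}\ker_\epsilon(u_q^{\L,+})$ is nothing but $U_q^{\L,+}\otimes_{u_q^{\L,+}}\C$, and inherits precisely this set of monomials as a $\C$-vector space basis.

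First I would invoke Lemma \nref{lm_SpecializationPBW}. Fixing Lusztig's convex ordering of positive roots, the lemma yields a multiplication isomorphism in which each positive root contributes a tensor factor of the form $\C[E_\alpha]/(E_\alpha^{\ell_\alpha})\otimes\C[E_\alpha^{(\ell_\alpha)}]$. Re-bracketing this decomposition, one obtains the vector space isomorphism
$$\left(\bigotimes_{\alpha\in\Phi^+}\C[E_\alpha]/(E_\alpha^{\ell_\alpha})\right)\otimes\left(\bigotimes_{\alpha\in\Phi^+}\C[E_\alpha^{(\ell_\alpha)}]\right)\xrightarrow{\;\cong\;}U_q^{\L,+},$$
where the factors on the left should be interpreted as multiplied in a chosen order. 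This separation rests on the full PBW theorem of the integral form (Theorem \nref{thm_IntegralFormPBW}), which remains valid after specialization.

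Next I would identify the first tensor factor with (the underlying vector space of) $u_q^{\L,+}$. By Theorem \nref{thm_Lusztiguq}, $u_q^{\L,+}$ has a PBW-basis consisting of the sorted products of $E_\alpha^{r_\alpha}$ with $r_\alpha<\ell_\alpha$; this is exactly the first factor above. Hence $U_q^{\L,+}$ is free as a $u_q^{\L,+}$-module on the ordered monomials in $E_\alpha^{(\ell_\alpha)}$. Since $u_q^{\L,+}$ is normal in $U_q^{\L,+}$ by Lemma \nref{lm_normal}, the left ideal $U_q^{\L,+}\ker_\epsilon(u_q^{\L,+})$ is two-sided, and the standard freeness argument gives the desired PBW-basis of $H$.

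The main obstacle is the exotic case $\g=G_2$, $q=\pm i$, where $u_q^{\L,+}$ carries an $A_3$-Nichols algebra structure with the additional generator $E_{112}$ and is not literally a subalgebra indexed only by simple root vectors. However, Theorem \nref{thm_smalluq} already establishes that the PBW-basis of $u_q^{\L,+}$ in this case still coincides with the set of ordered monomials $\prod_{\alpha\in\Phi^+(G_2)}E_\alpha^{r_\alpha}$ with $r_\alpha<2$, so the factorisation above continues to hold. Once this identification is in place, the freeness statement and hence the PBW-basis of $H$ follow uniformly in all cases.
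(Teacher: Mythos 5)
Your proposal follows the paper's own route: invoke the refined PBW decomposition of Lemma~\nref{lm_SpecializationPBW} together with the PBW-basis of $u_q^{\L,+}$ from Theorem~\nref{thm_Lusztiguq}, and deduce that the surviving monomials in the quotient are exactly those built from the $E_\alpha^{(\ell_\alpha)}$. The one place where you should be careful is the ``re-bracketing'' step: passing from $\bigotimes_\alpha\left(\C[E_\alpha]/(E_\alpha^{\ell_\alpha})\otimes\C[E_\alpha^{(\ell_\alpha)}]\right)$ to $\left(\bigotimes_\alpha\C[E_\alpha]/(E_\alpha^{\ell_\alpha})\right)\otimes\left(\bigotimes_\alpha\C[E_\alpha^{(\ell_\alpha)}]\right)$ is not a mere re-bracketing but a reordering of noncommuting tensor factors, so it needs a short supporting argument (e.g.\ a filtration by the total ``big degree'' $\sum_\alpha\lfloor r_\alpha/\ell_\alpha\rfloor$ in which the commutators only produce lower-order terms) before the freeness of $U_q^{\L,+}$ over $u_q^{\L,+}$ can be read off; the paper's own proof is equally terse on this point.
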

\begin{proof}
  This follows from the PBW-basis given in Lemma \nref{lm_SpecializationPBW}
  and the fact that by construction $u_q^{\L,+}$ has a PBW-basis consisting of
  monomials in $E_{\alpha}^{k},k<\ell_\alpha$. Note that by using
  Lusztig's PBW-basis of root vectors, we have complete control over the vector
  space $U_q^{\L,+}$, also in degenerate cases. The involved question addressed
  in this article is the Hopf algebra structure.
\end{proof}

We next address the question when $H$ is actually an ordinary Hopf algebra.
This is precisely the use of the lattice calculations in Lemma
\nref{lm_ellLattice} -- even lattices will correspond to properly (but
symmetrically) braided Hopf algebras and are marked as such for each case in the
statement of Theorem \nref{thm_main}. 

\begin{lemma}\label{lm_braided}
   In the following cases is $H$ an ordinary complex Hopf algebra:
  \begin{center}
    \begin{tabular}{ll}
     $A_n,D_n,E_6,E_7,E_8,G_2,n\geq 2$ & $\ell\neq2\mod 4$\\
     $B_n,n\geq 3$ & $\ell\neq4\mod 8$\\
     $C_n,n\geq 3$ & $\ell\neq2\mod 4$\\
     $F_4$ & $\ell\neq 2,4,6\mod 8$\\
    \end{tabular}
    \end{center}
    In the other cases $H$ is a Hopf algebra in a symmetrically braided category
    explicitly described in the proof.
\end{lemma}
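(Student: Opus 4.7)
The plan is to reduce the dichotomy to the arithmetic computation already carried out in Lemma~\ref{lm_ellLattice}. The key observation is that $H$ inherits from $U_q^{\L,+}$ a $\Lambda$-grading compatible with the $\Lambda$-Yetter-Drinfel'd structure, and by the preceding PBW lemma $H$ is spanned by monomials in the $E_\alpha^{\ell_\alpha}$. Since $E_\alpha^{\ell_\alpha}$ carries grading $K_\alpha^{\ell_\alpha}$, every occurring homogeneous degree in $H$ already lies in the sublattice $\Lambda_R^{(\ell)}\subset \Lambda$. Hence the entire braiding of $H$ is controlled by the restriction of the pairing $(\_,\_)$ to $\Lambda_R^{(\ell)}$.

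Next I would spell out explicitly that on homogeneous elements $x\in H_g,\,y\in H_h$ with $g,h\in \Lambda_R^{(\ell)}$ the braiding acts as $c(x\otimes y)=q^{(g,h)}\,y\otimes x$. This follows directly from the defining relation $K_\lambda E_{\alpha_i}K_\lambda^{-1}=q^{(\lambda,\alpha_i)}E_{\alpha_i}$ in Definition~\nref{def_RationalForm}, together with the fact that the $\Lambda$-action on an element of degree $g$ is multiplication by $q^{(\lambda,g)}$. Consequently the Yetter-Drinfel'd braiding on $H$ is trivial, i.e.\ $c=\tau$ (ordinary flip), if and only if $q^{(g,h)}=1$ for all $g,h\in \Lambda_R^{(\ell)}$, equivalently $(g,h)\in \ell\Z$ for all such pairs. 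By Lemma~\nref{lm_ellLattice} this integrality condition is equivalent to avoiding the exceptional list enumerated there; inspecting the two tables, these are precisely the cases left out in the statement, so in the cases listed in the present lemma $H$ is an ordinary complex Hopf algebra.

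In the remaining exceptional cases Lemma~\nref{lm_ellLattice} still guarantees $(g,h)\in \tfrac{\ell}{2}\Z$, so $q^{(g,h)}\in\{\pm1\}$. The sign assignment factors through the quotient lattice $\Lambda_R^{(\ell)}/\ell\,\Lambda_R^{(\ell)}$ and defines a bimultiplicative symmetric pairing with values in $\{\pm 1\}$; the corresponding braided category of $\Lambda_R^{(\ell)}/\ell\,\Lambda_R^{(\ell)}$-graded vector spaces with sign-braiding is symmetrically braided, a generalized super vector space category in which $H$ lives as an ordinary Hopf algebra object. The only mild technical point, which I expect to be the main obstacle to a fully clean write-up rather than to the proof itself, is checking that the sign pairing really is symmetric and well-defined modulo $\ell\,\Lambda_R^{(\ell)}$; symmetry is immediate from symmetry of $(\_,\_)$, and well-definedness modulo $\ell$ follows from the integrality statement $(g,h)\in \tfrac{\ell}{2}\Z$ together with $\ell\,\Lambda_R^{(\ell)}\subset \{g\mid (g,\_)\in \ell\Z\}$, both supplied by Lemma~\nref{lm_ellLattice}.
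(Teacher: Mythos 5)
Your proposal is correct and follows essentially the same route as the paper: both identify the YD-braiding on the PBW generators $E_\alpha^{(\ell_\alpha)}$ as $q^{(\ell_\alpha\alpha,\ell_\beta\beta)}$, observe that all degrees occurring in $H$ lie in $\Lambda_R^{(\ell)}$, and then read off triviality resp.\ symmetry of the braiding from the integrality resp.\ half-integrality statement in Lemma~\nref{lm_ellLattice}. The only (inessential) difference is that the paper additionally remarks that the self-pairing $(\ell_\alpha\alpha,\ell_\alpha\alpha)\in\ell\Z$ always holds, so the self-braiding is trivial and the quotient is a domain, a point your write-up does not need for the stated dichotomy.
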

\begin{proof}
   The braiding between the generators of the PBW-basis is
  \begin{align*}
    c(E_\alpha^{(\ell_\alpha)}\otimes E_\beta^{(\ell_\beta)})
    =q^{(\ell_\alpha\alpha,\ell_\beta\beta)}
    \cdot E_\beta^{(\ell_\beta)}\otimes E_\alpha^{(\ell_\alpha)}
  \end{align*}
  We have proven in Lemma \nref{lm_ellLattice} that except in the excluded cases
  we have $(\ell_\alpha\alpha,\ell_\beta\beta)\in\ell\Z$, hence the braiding is
  trivial. Note that in the excluded cases we have
  $(\ell_\alpha\alpha,\ell_\beta\beta)\in\frac{\ell}{2}\Z$, hence $c^2=1$
  and the braiding is still symmetric. Moreover we've generally shown that 
  $(\ell_\alpha\alpha,\ell_\alpha\alpha)\in\ell\Z$, hence the self-braiding is
  trivial and $U^+$ is a domain (i.e. no truncations).
\end{proof}

The most tedious part is now to verify that $H$ is indeed the asserted universal
enveloping algebra. Note that the theorem of Kostant-Cartier could easily be
applied, but there are two downsides: For one there seems to be no apparent
reason, why the simple root vectors generate the entire algebra (compare the
case $u_q^\L$), especially since we do not have $F$'s (see Problem
\nref{prob_uq}). Moreover, we do not know a-priori whether some
$H=U(\g^{(\ell),+})$ would actually be the positive part of some semisimple Lie
algebra $\g^{(\ell)}$, especially in the braided cases), hence calculating the
Cartan matrix would not suffice.\\

Rather, we shall in the following explicitly check all braided commutators and
verify they actually lead to the positive part of the Lie algebra
$\g^{(\ell)})$ given for each case in the statement of the Main Theorem
\nref{thm_main}. This becomes again feasible through the trick staged in
Theorem \nref{thm_pairs}, namely reflecting the relevant cases to a rank $2$
parabolic subsystem. There we calculate by hand, which is quite tedious for
$G_2$:

\begin{lemma}\label{lm_gell}
   There is an algebra isomorphism $H\cong U(\g^{(\ell,^+)})$ for some 
   $\g^{(\ell,^+)}$ as follows:
  \begin{itemize}
    \item Generic case: For $\ell_\alpha=\ell_\beta$ for long and short roots we
    have $\g^{(\ell,+)}\cong \g^+$. Note that the isomorphism typically picks
    up scalar factors for non-simple root vectors.
    \item Duality case: For $\ell_\alpha\neq \ell_\beta$ for long and short
    roots we have $\g^{(\ell,+)}\cong (\g^{\vee})^{+}$ for the dual root
    system. More precisely we wish to prove:
    \begin{itemize}
    \item For $4|\ell$ we have $B_n^{(\ell,+)}\cong C_n^+$ and
    $C_n^{(\ell,+)}\cong B_n^+$ and a nontrivial automorphism
    $F_4^{(\ell,+)}\cong F_4^+$. All three maps double short roots and hence
    interchange short and long roots.
    \item For $3|\ell$ we have a nontrivial automorphism $G_2^{(\ell,+)}\cong
    G_2^+$. The map triples short roots and hence interchanges short and
    long roots.
    \end{itemize}
  \end{itemize}
  Especially this shows that $H$ is generated as an algebra by simple root
  vectors $E_\alpha^{(\ell_\alpha)}$. Since these are primitive up to lower
  terms (which vanish in $H$), this also proves we have a Hopf algebra
  isomorphism.
\end{lemma}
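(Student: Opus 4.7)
The plan is to exploit the PBW basis of $H$ in the $E_\alpha^{(\ell_\alpha)}$, together with Theorem \nref{thm_pairs}, to reduce the verification of all relations among the generators of $H$ to explicit computations inside rank-$2$ parabolic subsystems, where Lusztig's commutation formulas in \cite{Lusz90b} Sec. 5 are directly available.

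The first step will be to check primitivity of each $E_{\alpha_i}^{(\ell_i)}$ in $H$. From
$$\Delta(E_{\alpha_i}^{(\ell_i)})=\sum_{b=0}^{\ell_i} q^{d_i b(\ell_i-b)}\, E_{\alpha_i}^{(\ell_i-b)}\otimes E_{\alpha_i}^{(b)}$$
one sees that every mixed term with $0<b<\ell_i$ has both tensor factors in $u_q^{\L,+}$ and therefore vanishes in $H\otimes H$; the same collapse will also give the binomial coproduct on the higher $E_\alpha^{(k\ell_\alpha)}$ in $H$, so that once we establish that the simple $E_{\alpha_i}^{(\ell_i)}$ generate $H$ as an algebra, the Hopf isomorphism with $U(\g^{(\ell),+})$ follows automatically. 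Hence the core task is purely algebraic: compute the braided commutators of pairs of simple generators and identify the resulting structure constants.

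For this I will fix a pair of simple roots $\alpha_i,\alpha_j$ and work through the rank-$2$ types $A_1\times A_1,\,A_2,\,B_2,\,G_2$ using Lusztig's relations (a1)--(a7) of \cite{Lusz90b} Sec. 5 applied to $E_{\alpha_i}^{(\ell_i)}E_{\alpha_j}^{(\ell_j)}$. The key arithmetic input will be that a quantum integer $[k]_{q_\alpha}$ vanishes in the specialization precisely when $\ell_\alpha\mid k$, so the survival of each commutator in $H$ is controlled entirely by the ratio $\ell_\alpha/\ell_\beta$. In the generic case $\ell_\alpha=\ell_\beta$ the braided commutators reproduce the original Serre relations of $\g$ up to nonzero scalars (which accounts for the rescaling of non-simple root vectors in the isomorphism). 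In the duality case precisely the pairs catalogued as exceptions in Lemma \nref{lm_commute}, namely those whose braided commutator $[E_\alpha,E_\beta]$ already fails to vanish in $U_q^{\Q(q)}$, will yield the new nontrivial structure constants; upon passing to divided powers of the appropriate orders these are exactly the short-to-long relations of $\g^\vee$, implementing the doubling $B_n\leftrightarrow C_n$ and the $F_4$-automorphism for $4\mid\ell$, and the tripling $G_2\to G_2$ for $3\mid\ell$.

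The main obstacle will be the $G_2$ bookkeeping: with six positive roots and the relations (a1)--(a7) each carrying quantum binomials evaluated at both $q_{\alpha_1}=q$ and $q_{\alpha_2}=q^3$, tracking which monomials survive modulo $u_q^{\L,+}$ across each arithmetic regime $\ell\bmod 12$ is delicate. A secondary complication will be the exotic case $G_2,\ell=4$: since $u_q^{\L,+}$ is here of type $A_3$ and already contains $E_{112}$, the quotient $H$ cannot be handled by reduction to a standard rank-$2$ subsystem, and one must verify directly using the PBW basis of $U_q^\L$ that the six classes $E_\alpha^{(2)}$ with $\alpha\in\Phi^+(G_2)$ satisfy the $G_2$ Serre relations. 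Once every rank-$2$ case is settled, a second application of Theorem \nref{thm_pairs} promotes the local relations to global Serre relations, giving a surjection $U(\g^{(\ell),+})\twoheadrightarrow H$, and a dimension count comparing the resulting PBW basis of $U(\g^{(\ell),+})$ with the PBW basis of $H$ in the $E_\alpha^{(\ell_\alpha)}$ completes the identification.
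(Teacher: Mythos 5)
Your overall strategy — reduce to rank $2$ via Theorem \nref{thm_pairs}, compute braided commutators with Lusztig's commutation formulas, and read off the structure constants of $\g^{(\ell)}$ — is exactly what the paper does, and the observation that the exceptional pairs of Lemma \nref{lm_commute} are responsible for the duality $B_n\leftrightarrow C_n$, the $F_4$ automorphism and the $G_2$ tripling is also the paper's mechanism. However there are two points to flag.

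The more serious one is the closing step. You write that the local Serre relations give ``a surjection $U(\g^{(\ell),+})\twoheadrightarrow H$'' and that a dimension count completes the identification. But verifying the Serre relations among the $E_{\alpha_i}^{(\ell_i)}$ only yields an algebra \emph{map} $U(\g^{(\ell),+})\to H$; surjectivity is equivalent to the claim that the simple $E_{\alpha_i}^{(\ell_i)}$ generate $H$, which is not automatic. The paper explicitly warns about this (see the discussion before the lemma and Problem \nref{prob_uq}: ``there seems to be no apparent reason, why the simple root vectors generate the entire algebra (compare the case $u_q^\L$)''), and indeed in the exotic case $u_q^{\L,+}$ itself is \emph{not} generated by simple root vectors. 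The paper's commutator computations do double duty here: they not only verify that the relations are those of $\g^{(\ell),+}$, they exhibit each non-simple PBW generator $E_\alpha^{(\ell_\alpha)}$ as a nonzero braided commutator of previously obtained ones, which is what proves generation. In your plan this step needs to be made explicit; a dimension count without the surjectivity does not close the argument, and the surjectivity is not a free consequence of the Serre relations.

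The second point is minor. You describe the exotic case $\g=G_2$, $\ell=4$ as one where ``the quotient $H$ cannot be handled by reduction to a standard rank-$2$ subsystem'' because $u_q^{\L,+}$ is of type $A_3$. Since $G_2$ is already rank $2$, the reduction is the identity, and the root-system type of $u_q^{\L,+}$ is irrelevant to the computation in $H$: the PBW basis of $H$ is still indexed by $\Phi^+(G_2)$ and the commutators are computed with the same $G_2$ formulas from \cite{Lusz90b} Sec.~5.4 (including (a8), (a9), which you should add to your list alongside (a1)--(a7)). The only genuine extra burden there, as the paper notes in its part f), is that one must compute in $U_q^{\Q(q)}$ before specializing; there is no structural obstruction.
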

\begin{proof}
  The case $\ord(q^2)>d_\alpha$ has been treated in \cite{Lusz94} Thm.
  35.1.9. Hence we again restrict ourselves to the degenerate cases, but note that our proof 
	works in general. 
  The degenerate cases with small orders of $q$ were already given in Theorem
  \nref{thm_smalluq} and are of type $B_n,C_n,F_4,G_2$. To calculate
  (symmetrically) braided  commutators
  $[E_\alpha^{(\ell_\alpha)},E_\alpha^{(\ell_\alpha)}]$ (and especially verify
  they are nonzero at the respective cases) we may hence  invoke Theorem
  \nref{thm_pairs}, which states that $\alpha,\beta$ can be  mapped
  simultaneously  to a rank $2$ parabolic subsystem of $\g$ and we may  demand
  that one root is even a simple root.\\

  We hence have to verify for the potential rank $2$ parabolic subsystems
  $A_1\times A_1,A_2,B_2,G_2$ that the commutators 
  $[E_{\alpha_i}^{(\ell_\alpha)},E_\beta^{(\ell_\beta)}]$ for roots 
  $\alpha_i,\beta\in \Phi(\g)^+$ are zero and nonzero in $\g^{(\ell)}$ in 
  agreement with the assumed isomorphism $H\cong U(\g^{\ell})^+$.
  We again excessively use Lusztig's 
  commutation formulae \cite{Lusz90b} Sec. 5 and since we calculate in the
  quotient $H$, all terms $E_\alpha^{(k)},k<\ell_\alpha$ all zero.
  \begin{enumerate}[a)]
  \item First recall from Lemma \nref{lm_commute} that whenever
  $\alpha_i+\beta\not\in\Phi(\g)$ then $E_{\alpha_i}^{(k)},E_{\beta}^{(k')}$
  braided commute with the following exceptions
  $$B_2:\;(\alpha_{112},\alpha_2)\qquad
    G_2:\;(\alpha_{11122},\alpha_2),
    (\alpha_{112},\alpha_2),(\alpha_1,\alpha_{11122})$$
  (where the last two are in a Weyl orbit). These exceptions will be extremely
  crucial for the formation of the dual root system.
  \item $A_1\times A_1$ is trivial by a). 
  \item For $A_2$ (especially $\ell=2$) we wish to verify $H\cong
  U(\g^{(\ell)})^+$ for $\g^{(\ell)}=\g$. In view of a) we only have to check
  that the nontrivial commutator is indeed nonzero: 
  \begin{align*}
      E_1^{(k)}E_2^{(k')}
      &=\sum_{\substack{r+s=k'\\s+t=k}}q^{tr+s}E_2^{(r)}E_{12}^{(s)}E_1^{(t)}
   \end{align*}
  Since $A_2$ is simply-laced, all $\ell_\alpha=\ord(q^2)$ coincide. If we apply
  the commutation formula to $k=k'=\ell_\alpha$ all terms on the right
  hand side vanish in the quotient except $r=t=\ell_\alpha,s=0$ and
  $r=t=0,s=\ell_\alpha$, hence in the quotient:
    \begin{align*}
      E_1^{(\ell_\alpha)}E_2^{(\ell_\alpha)}
      &=q^{\ell_\alpha^2}E_2^{(\ell_\alpha)}E_1^{(\ell_\alpha)}
      +q^{\ell_\alpha}E_{12}^{(\ell_\alpha)}\\
      &\Rightarrow\quad \left[E_1^{(\ell_\alpha)}E_2^{(\ell_\alpha)}\right]
      =q^{\ell_\alpha}E_{12}^{(\ell_\alpha)}\quad\mbox{braided for }\ell=2\mod 4
   \end{align*}
  As proven already in Lemma \nref{lm_braided}, the commutator is
\emph{braided} whenever $\ell=2\mod 4$, since then
  $q^{\ell^2_\alpha}=q^{\ell\frac{\ell}{4}}=-1$. Note that the commutator
  calculation above is rather general and would in fact is able to treat
  all cases with equal $d_\alpha$ (simply-laced or not) -- this is roughly how
  Lusztig argues in \cite{Lusz90b} Lm.  8.5. for $2\nmid \ell$.
  \item For $\g=B_2,4|\ell$ (especially $\ell=4$) we wish to verify $H\cong
  U(\g^{(\ell)})^+$
  for $\g^{(\ell)}=B_2^\vee=C_2\cong B_2$ with the isomorphism doubling short
  roots and hence switching short and long roots. We have for short roots 
  $\ell_{1}=\ell_{12}=\frac{\ell}{2}$ and  for long roots
  $\ell_{2}=\ell_{112}=\frac{\ell}{4}$. We have to check the  commutators in the
  quotient $H$ for $\alpha_{1},\alpha_{2}$ and    $\alpha_{1},\alpha_{12}$ with
  $\alpha_i+\beta\in\Phi(\g)$ as well as the   exception
  $\alpha_{2},\alpha_{112}$ in a):
  \begin{align*}
  E_1^{(\frac{\ell}{2})}E_2^{(\frac{\ell}{4})}
  &=\sum_{\substack{r,s,t,u\geq 0 \\
  r+s+t=\frac{\ell}{4}\\s+2t+u=\frac{\ell}{2}}}
  q^{2ru+2rt+us+2s+2t} E_2^{(r)}E_{12}^{(s)}E_{112}^{(t)}E_1^{(u)}\\
  &=\underbrace{q^{2\frac{\ell}{4}\frac{\ell}{2}}
  E_2^{(\frac{\ell}{4})}E_1^{(\frac{\ell}{2})}}_{r=\frac{\ell}{4},s=0,t=0
  ,u =\frac{\ell}{2}}
  +\underbrace{q^{2\frac{\ell}{4}}E_{112}^{(\frac{\ell}{4})}}_{r=0,s=0,t=\frac{
  \ell } { 4 } ,u=0}\\
  \Rightarrow\quad\left[E_1^{(\frac{\ell}{2})}E_2^{(\frac{\ell}{4})}\right]
  &=-E_{112}^{(\frac{\ell}{4})}\\
  E_{112}^{(\frac{\ell}{4})}E_{2}^{(\frac{\ell}{4})}
  &=\sum_{\substack{r,s,t\geq 0\\r+s=\frac{\ell}{4}\\s+t=\frac{\ell}{4}}}
  q^{-2sr-2st+2s} \left(\prod_{i=1}^s\left(q^{2-4i}-1\right)\right)
  E_2^{(r)}E_{12}^{(2s)}E_{112}^{(t)}\\
  &=\underbrace{E_2^{(\frac{\ell}{4})}
  E_{112}^{(\frac{\ell}{4})}}_{r=\frac{\ell}{4},s=0,t =\frac{\ell}{2}}
  +\underbrace{\left(\prod_{i=1}^{\frac{\ell}{4}}\left(q^{2-4i}-1\right)\right)
  E_{12}^{(\frac{\ell}{2})}}_{r=0,s=\frac{\ell}{4},t=0}\\
\Rightarrow\quad \left[E_{112}^{(\frac{\ell}{4})},E_2^{(\frac{\ell}{4})}\right]
  &=\left(\prod_{i=1}^{\frac{\ell}{4}}\left(q^{2-4i}-1\right)\right)\cdot
  E_{12}^{(\frac{\ell}{2})}\quad \neq 0\qquad\mbox{since }-2>2-4i>2-\ell\\
  E_1^{(\frac{\ell}{2})}E_{12}^{(\frac{\ell}{2})}
  &=\sum_{\substack{r,s,t\geq 0 \\ r+s=\frac{\ell}{2}\\s+t=\frac{\ell}{2}}}
  q^{-sr-st+s}\left(\prod_{i=1}^s q^{2i}+1\right)
  E_{12}^{(r)}E_{112}^{(s)}E_1^{(t)}\\
  &=\underbrace{E_{12}^{(\frac{\ell}{2})}E_1^{(\frac{\ell}{2})}}
  _{r=\frac{\ell}{2},s=0,t=\frac{\ell}{2}}+
+\underbrace{q^{\frac{\ell}{2}}\left(\prod_{i=1}^\frac{\ell}{2}(q^{2i}+1)\right)
  E_{112}^{(\frac{\ell}{2})}}_{r=0, s=\frac { \ell } { 2 } , t=0 }\\ 
  \Rightarrow\quad\left[E_1^{(\frac{\ell}{2})},E_{12}^{(\frac{\ell}{2})}\right]
  &=0\qquad \mbox{since for $2|\ell$ the product vanishes for
  }i=\frac{\ell}{4}
\end{align*}
  We convince ourselves that this result agrees with the assumed root system
  $\g^{(\ell)}=B_2^\vee$ with $\alpha_1':=\frac{\ell}{2}\alpha_1$ now the long
  root and $\alpha_2':=\frac{\ell}{4}\alpha_2$ now the short root:
  \begin{itemize}
   \item The commutator for $\alpha'_1,\alpha'_2$ is nonzero and
    proportional to the root vector 
    $$\alpha'_1+\alpha'_2=\frac{\ell}{2}\alpha_1+\frac{\ell}{4}\alpha_2
    =\frac{\ell}{4}\alpha_{112}$$
   \item The commutator for $\alpha'_1+\alpha'_2,\alpha'_2$ involving the
    exceptional pair $\alpha_2+\alpha_{112}\not\in\Phi(\g)$ is nonzero
    and proportional to the root vector (with higher divided power)
    $$(\alpha'_1+\alpha'_2)+\alpha'_2
    =\frac{\ell}{2}\alpha_1+2\frac{\ell}{4}\alpha_2   
    =\frac{\ell}{2}\alpha_{12}$$
    \item The commutator for $\alpha'_1,\alpha'_1+2\alpha'_2$ is exceptionally
    zero in the case $4|\ell$ (even though $\alpha_1+\alpha_{12}\in\Phi$),
    which is in agreement with the assumed root system $B_2^\vee$.
    \item All other commutators are trivial by a) in agreement with $B_2^\vee$.
  \end{itemize}\noindent 
  We have hence verified $H\cong U(\g^{(\ell)})^+$ with
  $\g^{(\ell)}=B_2^\vee\cong B_2$ for $4|\ell$.
\end{enumerate}

For $\g=G_2$ we do not have the luxury of \cite{Lusz90b} Sec. 5.3 and instead
have to use Sec. 5.4. We restrict ourselves to the relevant cases $\ell=3,6$
(duality case, the most tedious) and $\ell=4$ (exotic case). We again use
the convention $\epsilon:=q^3=\pm1$. To reduce the number of commutator
calculations we can by reflection restrict ourselves to one representative per
Weyl group orbit of pairs $(\alpha,\beta)$. For $G_2$ these are classified by
angle and lengths.

\begin{enumerate}[a)]
\item[e)]
  For $\ell=3,6$ we wish to verify $H\cong U(\g^{(\ell)})^+$
  for $\g^{(\ell)}=G_2^\vee\cong G_2$ with the isomorphism tripling short
  roots and hence switching short and long roots. We have
  for short roots $\ell_{1}=\ell_{12}=\ell_{112}=3$ and for long roots
  $\ell_{2}=\ell_{1112}=\ell_{11122}=1$. Hence in the quotient $H$ all (left- or
  rightmost) $E_{\alpha}^{(k)}=0$ when $\alpha$ short and $0<k<3$.
  In view of  a) we have to check all pairs $\alpha_i+\beta\in\Phi^+(\g)$ as
  well as the three exceptions (of which one is a reflection of the other): We
  start with all pairs including a long root:
  \begin{align*}
    \left[E_{1}^{(3)},E_2\right]
	&=E_{1}^{(3)}E_2-q^{-9}E_2E_{1}^{(3)}\\
	&\stackrel{(a6)}{=}q^{9}E_2E_{1}^{(3)}+q^7E_{12}E_1^{(2)}
	+q^5E_{112}E_1+q^3E_{1112}-q^{-9}E_2E_{1}^{(3)}\\
	&=(\epsilon^3-\epsilon^{-3})E_2E_{1}^{(3)}+\epsilon E_{1112}
	=\epsilon E_{1112}\\
    \left[E_{1112},E_2\right]
	&=E_{1112}E_2-q^{-3}E_2E_{1112}\\
	&\stackrel{(a9)}{=} q^3E_2E_{1112}+(-q^4-q^2+1)E_{11122}
	+(q^2-q^4)E_{12}E_{112}-q^{-3}E_2E_{1112}\\
	&=(\epsilon-\epsilon^{-1})E_{1112}E_2+(-q^4-q^2+1)E_{11122} 
	=2E_{11122}\\
    \left[E_{112}^{(3)},E_2\right]
	&=E_{112}^{(3)}E_2-E_2E_{112}^{(3)}\\
	&\stackrel{(a8)}{=}E_2E_{112}^{(3)}
	+q^{-4}(q^{-3}-q^{3})E_{12}E_{11122}E_{112}
	+q^{-3}(q^{-6}-q^6)E_{11122}^{(2)} \\
	&+q^{-1}(q^{-2}-q^2)E_{12}^{(2)}E_{112}^{(2)}
	-E_2E_{112}^{(3)}\\
	&=\epsilon(\epsilon^{-2}-\epsilon^{2})E_{11122}^{(2)}=0\\
    \left[E_{11122},E_2\right]
	&=E_{11122}E_2-q^3E_2E_{11122}\\
	&\stackrel{(a7)}{=}q^{-3}E_2E_{11122}
	+q^{-3}(q^2-1)(q^4-1)E_{12}^{(3)}-q^3E_2E_{11122}\\
	&=(\epsilon^{-1}-\epsilon)E_2E_{11122}
	+\epsilon^{-1}(q^6-q^4-q^2+1)E_{12}^{(3)}
	=3\epsilon E_{12}^{(3)}
  \end{align*}
  The remaining pair of short roots are $\alpha_1+\alpha_{12}\in\Phi$ is more 
  work, due to $\ell_1=\ell_{12}=3$. We shall from  now on calculate in	
  $U_q^{\Q(q)}$ (to get rid of the divided power of $E_1$)  and successively 
  apply the commutation rule $(a3)$ for single powers  $E_{12}^{(k)}E_1$:
  \begin{align*}
  E_1^3E_{12}^{(3)}
  &=E_1^2\left(q^3E_{12}^{(3)}E_1+[2]qE_{12}^{(2)}E_{112}
    +[3]q^{-1}E_{12}E_{11122}\right)\\
  &=q^3E_1\left(q^3E_{12}^{(3)}E_1+[2]qE_{12}^{(2)}E_{112}
    +[3]q^{-1}E_{12}E_{11122}\right)E_1\\
  &+[2]qE_1\left(q^2E_{12}^{(2)}E_1+[2]qE_{12}E_{112}
    +[3]E_{11122}\right)E_{112}\\
  &+[3]q^{-1}E_1^2E_{12}E_{11122}\\
  &=q^6\left( q^3E_{12}^{(3)}E_1+[2]qE_{12}^{(2)}E_{112}
    +[3]q^{-1}E_{12}E_{11122} \right) E_1^2\\
  &+[2]q^4\left( q^2E_{12}^{(2)}E_1+[2]qE_{12}E_{112}
    +[3]E_{11122} \right)E_{112}E_1\\
  &+[2]q^3\left( q^2E_{12}^{(2)}E_1+[2]qE_{12}E_{112}
    +[3]E_{11122}\right)E_1E_{112}\\
  &+[2]^2q^2\left( qE_{12}E_1+[2]qE_{112}\right)E_{112}^2\\	
  &+[3]q^{-1}E_1^2E_{12}E_{11122}+[3][2]qE_1E_{11122}E_{112}
  \end{align*}
  After multiplying out we have four types of summands: The leading terms
  $q^9E_{12}^{(3)}E_1^3$ and $q^3[2]^3E_{112}^3$, several terms involving
  $[2][3]$ (say $X_1$), two terms involving  $[3]E_1^2$ (say $X_2$)
  and other terms involving $E_{12}$ or $E_{12}^{(2)}$, say  $Y_1$ and  $Y_2$.
  We shall further simplify the $Y_i$ and use  $E_1E_{112}=q^{-1}E_{112}E_1$:
  \begin{align*}
    Y_1&=[2]^2E_{12}\left(
      q^5E_{112}^2E_1
      +q^4E_{112}E_1E_{112}
      +q^3E_1E_{112}^2\right)\\
    &=[3][2]^2q^3E_{12}E_{112}^2E_1\\
    Y_2&=[2]E_{12}^{(2)}\left( 
      q^7E_{112}E_1^2 
      +q^6E_1E_{112}E_1
      +q^5E_1^2E_{112}\right)\\
    &=[3][2]q^5E_{12}^{(2)}E_{112}E_1^2
  \end{align*}
  so these terms $Y_1,Y_2$ can also be brought to a form involving $[3][2]$. If
  we now multiply the overall expression we derived for $E_1^3E_{12}^{(3)}$ by
  $\frac{1}{[3]!}$ and reinstate integral powers, we indeed
  find that all summands above are in the Lusztig integral form
  $U_q^{\Z[q,q^{-1}],\L}$ by themselves: We have the leading terms
  $q^9E_{12}^{(3)}E_1^{(3)}$ and $q^3[2]^3E_{112}^{(3)}$, terms $X_1',Y_1',Y_2'$
  where the present  $[3][2]=[3]!$ cancels and $X_2'$ (involving $E_1^2$) where
  $[3]$ cancels and we get $E_1^{(2)}$ from $E_1^2$.\\

  We may hence consider the above decomposition in the
  specialization $\U_q^{\L}$ and then in the quotient $H$:
  $$E_1^{(3)}E_{12}^{(3)}=q^9E_{12}^{(3)}E_1^{(3)}+q^3(q+q^{-1})^3E_{112}^{(3)}
  +X_1'+X_2'+Y_1'+Y_2'$$
  But in the quotient all monomials involving powers
  $E_\alpha^{(k)},E_\alpha^{k},k<3$ for  the short root vectors
  $E_1,E_{12},E_{112}$ as either leftmost or rightmost  factor vanish. We
  convince ourselves that such powers appear in every summand  of
  $X_0',X_1',Y_0',Y_1'$, which are hence zero in $H$. Hence we have finally
  proven for $q=\sqrt[3]{-1},\sqrt[6]{-1}$:
  $$\left[E_1^{(3)},E_{12}^{(3)}\right]=q^3[2]^3E_{112}^{(3)}
  =E_{112}^{(3)}\neq 0,\qquad\mbox{note }\ell_{112}=3$$
  We convince ourselves that our result agrees with the assumed root system
  $\g^{(\ell)}=G_2^\vee$ with $\alpha_1':=3\alpha_1$ now the long root
  and $\alpha_2':=\alpha_2$ now the short root:
  \begin{itemize}
   \item The commutator for $\alpha'_1,\alpha'_2$ is nonzero and
    proportional to the root vector 
    $$\alpha'_1+\alpha'_2=3\alpha_1+\alpha_2
    =\alpha_{1112}$$
   \item The commutator for $\alpha'_1+\alpha'_2,\alpha'_2$ is nonzero
    and proportional to the root vector 
    $$(\alpha'_1+\alpha'_2)+\alpha'_2
    =3\alpha_1+2\alpha_2
    =\alpha_{11122}$$
    \item The commutator for $\alpha'_1+2\alpha'_2,\alpha'_2$ involving the
    exceptional pair $\alpha_{11122}+\alpha_2\not\in\Phi(\g)$ is nonzero
    and proportional to the root vector (with higher divided power)
    $$(\alpha'_1+2\alpha'_2)+\alpha'_2
    =3\alpha_1+3\alpha_2
    =3\alpha_{12}$$
    \item The commutator for $\alpha_1,\alpha'_1+3\alpha'_2$ is
    nonzero and proportional to the root vector 
    $$\alpha'_1+(\alpha'_1+3\alpha'_2)
    =6\alpha_1+3\alpha_2
    =3\alpha_{112}$$ 
    \item The commutator for $2\alpha'_1+3\alpha'_2,\alpha'_2$ involving the
    exceptional pair $\alpha_{112}+\alpha_2\not\in\Phi(\g)$ is zero.
    \item All other commutators are trivial by a) in agreement with $G_2^\vee$.
  \end{itemize}
  We have hence verified $H\cong U(\g^{(\ell)})^+$ with
  $\g^{(\ell)}=G_2^\vee\cong
  G_2$ for $\ell=3,6$.
  \item[f)]  
  For $\ell=4$ we we wish to verify $H\cong
  U(\g^{(\ell)})^+$ for $\g^{(\ell)}=G_2$. We have $\ell_\alpha=2$ for all
  roots, so all $E_\alpha=0$ in the quotient $H$, which implies
  $E_\alpha^{(k)}=0$ for all $2\nmid k$. This present case hence works 
  analogously to b). However we have to exclusively calculate in $U_q^{\Q(q)}$
  as in the last  case of e). We only spell out one non-trivial case and the
  exceptional pair $(\alpha_2,\alpha_{112})$ in a):
  \begin{align*}
    E_1^2E_{112}^{(2)}
    &\stackrel{(a4)}{=}q^{-2}E_1E_{112}^{(2)}E_1+[3]q^{-3}E_1E_{112}E_{1112}\\
    &\stackrel{(a4)}{=}q^{-4}E_{112}^{(2)}E_1^2+[3]q^{-5}E_{112}E_{1112}E_1
    +[3]q^{-4}E_{112}E_1E_{1112}+[3]^2q^{-4}E_{1112}^2\\
    &=q^{-4}E_{112}^{(2)}E_1^2
    +[2][3]q^{-6}E_{112}E_{1112} E_1
    +[3]^2q^{-4}E_{1112}^2\\
    \Rightarrow \quad\left[ E_1^{(2)},E_{112}^{(2)} \right]
    &=[3]q^{-6}E_{112}E_{1112}E_1+[3]^2q^{-4}E_{1112}^{(2)}
    \stackrel{H}{=}[3]^2q^{-4}E_{1112}^{(2)}\stackrel{q=\pm i}
    {=}4E_{1112}^{(2)}\neq 0\\
E_{112}^{(2)}E_2^2
&\stackrel{(a8)}{=}E_2E_{112}^{(2)}E_2
+q^{-2}(q^{-3}-q^3)E_{12}E_{11122}E_2-[2](q^{-1}-q)E_{12}^{(2)}E_{112}E_2\\
&\stackrel{(a8)}{=}E_2^2E_{112}^{(2)}
+q^{-2}(q^{-3}-q^3)E_2E_{12}E_{11122}-[2](q^{-1}-q)E_2E_{12}^{(2)}E_{112}\\
&+q^{-5}(q^{-3}-q^3)E_{12}E_2E_{11122}
+[2]q^{-4}(q^{-3}-q^3)(q^2-1)^2E_{12}E_{12}^{(3)}\\
&-[2](q^{-1}-q)E_{12}^{(2)}E_2E_{112}
-q[2]^2(q^{-1}-q)^2E_{12}^{(2)}E_{12}^{(2)}\\
&=E_2^2E_{112}^{(2)}
+[2]q^{-7}(q^2-q+)(q^{-3}-q^3)E_2E_{12}E_{11122}-[2](q^{-1}-q)E_2E_{12}^{(2)}E_{
112}\\
&+[2]q^{-4}(q^{-3}-q^3)(q^2-1)^2E_{12}E_{12}^{(3)}
-[2](q^{-1}-q)E_{12}^{(2)}E_2E_{112}\\
&-q[2]^2(q^{-1}-q)^2E_{12}^{(2)}E_{12}^{(2)}\\
\Rightarrow\quad\left[E_{112}^{(2)},E_2^{(2)}\right]
&\stackrel{H}{=}0
  \end{align*}
These calculations become quicker, if summands in the integral form are
eliminated already during the calculation. 
 \end{enumerate}
\end{proof}

\newpage 
\section{Open Questions}\label{sec_question}

We finally give some open questions that the author would find interesting:\\

In view of the boundaries of the present article:
\begin{problem}\label{prob_uq}
  It would be desirable to have a short exact sequence for the full quantum
  group $U_q^\L(\g)$ instead of just the Borel part $U_q^\L(\g)^+$. 
\end{problem}

In view of our first Main Theorem \nref{thm_smalluq} on the structure of
$u_q^\L(\g)$ for small $q$:
\begin{problem}\label{prob_affine}
  One should calculate this table for affine quantum groups for $q$ of small
  order (compare
  the explicit Frobenius homomorphism for large order in \cite{CP97}). Depending
  on the case, one might expect a different affine root system of an infinite
  union of finite root systems. During the publication of this article, the author has indeed 
	calculated the respective Nichols algebras and root systems in \cite{Len14b}, but many 
	questions are open, in particular regarding the "`shifting"' of the isotropic roots.
\end{problem}

In view of our second Main Theorem \nref{thm_main} on the short exact sequence
for $U^\L_q(\g)$
\begin{problem}\label{prob_extension}
 The author has
already asked in  Oberwolfach (\cite{MFO14}  Question 5) for more examples or
even a classification of infinite-dimensional Hopf algebra extensions $H$
of a finite-dimensional pointed Hopf algebra $h$ by a universal enveloping
algebra $U$. There seem to be several sources of interesting examples:
  \begin{enumerate}[a)]
   \item The canonical examples is $H=U_q^\L(\g),h=u_q(\g),U=U(\g)$.
   \item By the results in this paper, for small roots of unity 
    $H=U_q^\L(\g),h=u_q(\g^{(0)}),U=U(\g^{(\ell)})$ are examples with
    $\g^{(0)}\neq\g\neq\g^{(\ell)}$.	
   \item The graded dual of Angiono's Pre-Nichols algebras \cite{An14} (which
    corresponds to a Kac-Procesi-DeConcini form). Here $U$ consists of 
    Cartan-type simple roots, but $h$ is a larger Nichols algebra. They
    conjecture several intriguing universal properties of $H$.
   \item The Hopf algebra in \cite{Good09} Construction 1.2., where
    (implicitly and again dual) $h$ is of type $A_1^{\times n}$ and $U$ is of
    type $A_1$. This should work much more general by joining suitable elements
    in $U(\g)$ for $U_q^\L(\g)$.
  \item The families of large-rank Nichols algebras $h$ over nonabelian groups
    constructed by the author in \cite{Len14a} using a diagram automorphism
    $\sigma$ on a Lie algebra $\g$ should yield examples with $U=U(\g)$ and $h$
    with root system $\g^\sigma$, e.g. $\g=E_6,\g^\sigma=F_4$.
  \end{enumerate}
  A good general classification approach should be to consider lifting data for
  Nichols algebras as in \cite{AS10}, but instead of coradical elements
  introduce new primitives (forming $U$) and then take the graded dual.\\
  Besides their theoretical charm, these extensions should have interesting
  applications to conformal field theory. E.g. the author's example b) for
  $\g=B_n,q=\pm i$ should correspond to $n$ symplectic fermions and the
  example d) is precisely the Hopf algebra considered by Gainutdinov,
  Tipunin for $W(p,p')$-models (unpublished).
\end{problem}

\end{document}